\documentclass[11pt]{article}

\usepackage{verbatim}
\usepackage{amssymb}
\usepackage{amsbsy}
\usepackage{amscd}
\usepackage{amsmath}
\usepackage{amsthm}
\usepackage[mathscr]{euscript}
\usepackage{enumerate,esint}
\usepackage{xcolor}
\definecolor{darkgreen}{rgb}{0,0.75,0}
\definecolor{darkred}{rgb}{0.75,0,0}
\definecolor{darkmagenta}{rgb}{0.5,0,0.5}
\usepackage[colorlinks,citecolor=darkgreen,linkcolor=darkred,urlcolor=darkmagenta]{hyperref}

\allowdisplaybreaks[4]
\newcommand{\on}[1]{\operatorname{#1}}
\newcommand{\abs}[1]{\left\lvert#1\right\rvert}

\newcommand{\one}{\mathbf{1}}
\newcommand{\supp}{\operatorname{supp}}
\newcommand{\diam}{\operatorname{diam}}
\newcommand{\Gam}{\Gamma}

\newcommand{\sL}{\mathcal{L}}
\newcommand{\sH}{\mathcal{H}}

\newcommand{\bR}{\mathbb{R}}
\newcommand{\bN}{\mathbb{N}}

\newcommand{\bH}{\mathbb{H}}
\newcommand{\R}{\mathbb{R}}

\title{\fontsize{17.28}{20.74}\selectfont\bfseries Martingale and analytic dimensions}
\author{Sylvester Eriksson-Bique \and Mathav Murugan}
\date{}

\DeclareMathOperator{\dom}{dom}
\DeclareMathOperator{\gap}{gap}
\DeclareMathOperator{\len}{len}
\DeclareMathOperator{\Lip}{Lip}
\DeclareMathOperator{\LIP}{LIP}
\DeclareMathOperator{\PI}{PI}

\newcommand{\N}{\mathbb{N}}
\newcommand{\restr}[2]{\left.#1\right|_{#2}}
\newtheorem{theorem}{Theorem}[section]
\newtheorem{lemma}[theorem]{Lemma}
\newtheorem{proposition}[theorem]{Proposition}

\theoremstyle{definition}
\newtheorem{definition}[theorem]{Definition}
\theoremstyle{definition}
\newtheorem{remark}[theorem]{Remark}
\newtheorem{example}[theorem]{Example}
\newtheorem{question}[theorem]{Question}
\newtheorem{conjecture}[theorem]{Conjecture}

\numberwithin{equation}{section}

\begin{document}
\maketitle
\vspace{-0.5cm}
\begingroup
\renewcommand{\thefootnote}{}
\footnotetext{\textit{2020 Mathematics Subject Classification.} Primary 60G44, 60J60, 31C25; Secondary 30L99, 53C23, 49Q15.\\
\textit{Keywords.} Martingale dimension, analytic dimension, Dirichlet form, energy measure, heat kernel, energy image density property, Lipschitz differentiability space.}
\endgroup
\setcounter{footnote}{0}

\begin{abstract}
	These notes introduce the martingale dimension of a strongly local
	regular Dirichlet form and the analytic dimension of a Lipschitz
	differentiability space. We explain why these dimensions coincide
	under two-sided Gaussian heat kernel estimates and discuss their
	relationships with other notions of dimension. We also introduce the
	energy image density property, its recent resolution, and its
	applications to martingale dimension. Finally, we show that spaces
	carrying diffusions with two-sided sub-Gaussian heat kernel estimates
	of walk dimension greater than two are not Lipschitz differentiability
	spaces.
\end{abstract}

Strongly local regular Dirichlet forms provide an analytic framework for
describing symmetric diffusions through their associated Markov semigroups.
Lipschitz differentiability spaces, on the other hand, arise from extending
Rademacher's theorem on the almost everywhere differentiability of Lipschitz
functions to metric measure spaces. These different motivations have led
the two theories to develop largely independently.

Despite their different origins, martingale dimension for strongly local
regular Dirichlet spaces and analytic dimension for Lipschitz
differentiability spaces can both be interpreted as notions of
``tangent-space dimension'' in nonsmooth settings. The energy image density
property, which originated in Malliavin calculus, and related
absolute-continuity results provide a connection between the two theories
and play an important role in understanding these dimensions.

The aim of these notes is to introduce these two notions of dimension,
explain their connections, and highlight their similarities and differences.
Throughout, we discuss open problems that illustrate how incomplete our
understanding of these dimensions remains. The first three sections largely follow the three lectures delivered by
the second-named author at the 18th MSJ-SI in Fukuoka. The final
section presents a new result showing that the underlying metric measure
spaces of a broad class of Dirichlet spaces arising in the study of
diffusions on fractals are not Lipschitz differentiability spaces.

\section{Lecture 1: Definitions of martingale and analytic dimensions}
 
In this lecture, we introduce the notions of martingale and analytic
dimensions, using Euclidean space and the Heisenberg group as our main
examples. These examples illustrate how both notions recover the
dimension of the tangent space in Euclidean space and of the horizontal
tangent space in the Heisenberg group.

\subsection{Strongly local Dirichlet forms}
The notion of martingale dimension is formulated in terms of the diffusion
associated with a regular strongly local Dirichlet form, whose definition we
recall below.

	Let $X$ be a locally compact, separable, metrizable topological
	space, and let $m$ be a Radon measure on $X$ with full support. A pair $(\mathcal{E},\mathcal{F})$
	is called a \emph{Dirichlet form} on $L^{2}(X,m)$ if $\mathcal{F}$ is a
	dense linear subspace of $L^{2}(X,m)$ and
	$\mathcal{E}\colon\mathcal{F}\times\mathcal{F}\longrightarrow\mathbb{R}$
	is a symmetric, non-negative definite bilinear form that is closed and
	Markovian. Here, closedness means that $\mathcal{F}$ is a Hilbert space
	with respect to the inner product
	\[
	\mathcal{E}_{1}(f,g)
	:=
	\mathcal{E}(f,g)+\langle f,g\rangle_{L^{2}(X,m)},
	\qquad f,g\in\mathcal{F},
	\]
	and the Markov property means that, for every $f\in\mathcal{F}$,
	\[
	f^{+}\wedge 1\in\mathcal{F}
	\quad\text{and}\quad
	\mathcal{E}(f^{+}\wedge 1,f^{+}\wedge 1)
	\leq \mathcal{E}(f,f).
	\]
	
	The Dirichlet form $(\mathcal{E},\mathcal{F})$ is called \emph{regular} if
	$\mathcal{F}\cap C_{\mathrm{c}}(X)$ is dense both in
	$(\mathcal{F},\mathcal{E}_{1})$ and in
	$(C_{\mathrm{c}}(X),\|\cdot\|_{\mathrm{sup}})$.	It is called \emph{strongly local} if
	\[
	\mathcal{E}(f,g)=0
	\]
	whenever $f,g\in\mathcal{F}$, $\supp_{m}[g]$ is compact, and there exists
	$a\in\mathbb{R}$ such that
	\[
	\supp_{m}[f-a\one_{X}]\cap\supp_{m}[g]=\emptyset.
	\]
	Here, $\supp_m[f]$ denotes the support of the measure $f\cdot m$.

 The \emph{energy measure} $\Gamma(f,f)$ of $f\in\mathcal{F}$
 associated with the strongly local regular form $(\mathcal{E},\mathcal{F})$ is defined,
 first for $f\in\mathcal{F}\cap L^{\infty}(X,m)$ as the unique positive
 Borel measure on $X$ such that
 \begin{equation}\label{e:EnergyMeas}
 	\int_{X} g \, d\Gamma(f,f)= \mathcal{E}(f,fg)-\frac{1}{2}\mathcal{E}(f^{2},g) \qquad \textrm{for all $g \in \mathcal{F}\cap C_{\mathrm{c}}(X)$,}
 \end{equation}
 and then by
 $\Gamma(f,f)(A):=\lim_{n\to\infty}\Gamma\bigl((-n)\vee(f\wedge n),(-n)\vee(f\wedge n)\bigr)(A)$
 for each Borel subset $A$ of $X$ for general $f\in\mathcal{F}$. The signed measure $\Gamma(f,g)$ for $f,g \in \mathcal{F}$ is defined by polarization.

 We say that a positive $\sigma$-finite Borel measure $\nu$ is a \emph{minimal energy dominant measure} if it satisfies the following:
 \begin{enumerate}[(a)]
 	\item (Energy dominance) $\Gam(f,f) \ll \nu$ for every $f \in \mathcal{F}$, where $\Gam(f,f)$ is the energy measure of $f$.
 	\item (Minimality) If $\tilde{\nu}$ satisfies (a), then $\nu \ll \tilde{\nu}$.
 \end{enumerate}
 Note that (b) implies that any two minimal energy dominant measures are mutually absolutely continuous.
  Every strongly local, regular Dirichlet form has a minimal energy dominant measure   \cite[Lemma 2.2]{Nak}, \cite[Lemma 2.2]{hinoenergymeasures}.   Next, we recall the   definition of martingale dimension from \cite{hinoenergymeasures}.
 
 \subsection{Martingale dimension}

	Let $(\mathcal{E},\mathcal{F})$ be a strongly local regular Dirichlet form on
	$L^2(X,m)$. Denote its energy measures by $\Gamma(\cdot,\cdot)$,
	and let $\nu$ be a minimal energy dominant measure.
 	 The \emph{pointwise index} of $(\mathcal{E},\mathcal{F})$ is a measurable function
		$p_H:X\to\bN\cup\{0,\infty\}$ satisfying the following conditions:
		\begin{enumerate}[(a)]
			\item For every $N\in\bN$ and $f_1,\ldots,f_N\in\mathcal{F}$,
			\[
			\on{rank}
			\left(\frac{d\Gamma(f_i,f_j)}{d\nu}(x)\right)_{1\leq i,j\leq N}
			\leq p_H(x)
			\]
			for $\nu$-almost every $x\in X$.
			\item If a measurable function
			$p_H':X\to\bN\cup\{0,\infty\}$ satisfies \textup{(a)} in place
			of $p_H$, then $p_H\leq p_H'$ $\nu$-almost everywhere.
		\end{enumerate}
		These conditions determine $p_H$ uniquely up to $\nu$-null sets.
		  The \emph{martingale dimension} of $(\mathcal{E},\mathcal{F})$ is
		\[
		\nu\text{-}\on{esssup}_{x\in X}p_H(x).
		\]

The notion of martingale dimension and its terminology originate in
the work of Motoo and Watanabe \cite{MW} on stochastic integral
representations of martingale additive functionals of Markov processes (see also  \cite{DV}). Roughly
speaking, the martingale dimension of the associated diffusion is
the smallest number $k$ for which there is a fixed family of $k$
martingale additive functionals of finite energy such that every
martingale additive functional of finite energy admits a stochastic
integral representation with respect to this family. If no finite
family suffices, the martingale dimension is infinite.  
The equivalence between this probabilistic definition and the
analytic definition above is due to Hino
\cite[Theorem 3.4]{hinoenergymeasures}, building on earlier work
of Kusuoka \cite{Kus89,Kus93}.

\subsection{Examples: Euclidean space and the Heisenberg group}
Let $\mathcal{L}_n$ denote Lebesgue measure on $\mathbb{R}^n$.
The heat semigroup
\[
P_tf(x)=
\int_{\mathbb{R}^n}
\frac{1}{(4\pi t)^{n/2}}
\exp\left(-\frac{|x-y|^2}{4t}\right)f(y)\,dy,
\qquad t>0,
\]
is associated with the strongly local regular Dirichlet form
\[
\mathcal{F}=W^{1,2}(\mathbb{R}^n),
\qquad
\mathcal{E}(f,g)=
\int_{\mathbb{R}^n}\nabla f\cdot\nabla g\,d\mathcal{L}_n.
\]
Here the gradients are understood in the weak sense.
With this normalization,
\[
P_tf(x)=\mathbb{E}_x[f(B_{2t})],
\]
where $(B_t)_{t\geq0}$ is standard Brownian motion.

The energy measures are given by
\[
\Gamma(f,g)=
(\nabla f\cdot\nabla g)\,\mathcal{L}_n,
\qquad f,g\in\mathcal{F}.
\]
In particular, $\mathcal{L}_n$ is energy dominant. To see that it
is minimal, fix a ball $B$ and choose $\chi\in C_c^\infty(\mathbb{R}^n)$
equal to one on a neighbourhood of $\overline{B}$. The functions
$f_i(x)=\chi(x)x_i$, $1\leq i\leq n$, belong to $\mathcal{F}$ and satisfy
$\nabla f_i=e_i$ on $B$. Consequently,
\[
\Gamma(f_i,f_i)|_B=\mathcal{L}_n|_B.
\]
Every energy dominant measure therefore dominates
$\mathcal{L}_n|_B$. Taking a countable collection of balls covering
$\mathbb{R}^n$ shows that $\nu=\mathcal{L}_n$ is minimal energy dominant.

For any $f_1,\ldots,f_N\in\mathcal{F}$, the matrix of energy densities
is the Gram matrix of their gradients. Thus, almost everywhere,
\begin{align*}
	\on{rank}
	\left(\frac{d\Gamma(f_i,f_j)}{d\nu}(x)\right)_{i,j=1}^N
	&=
	\on{rank}
	\left(\nabla f_i(x)\cdot\nabla f_j(x)\right)_{i,j=1}^N\\
	&=
	\dim\on{span}\{\nabla f_1(x),\ldots,\nabla f_N(x)\}
	\leq n.
\end{align*}
It follows that $p_H\leq n$ almost everywhere. Conversely, the
cutoff coordinate functions above have the identity matrix as
their matrix of energy densities on $B$, so $p_H\geq n$ almost
everywhere on $B$. A countable covering by balls therefore gives
\[
p_H(x)=n
\qquad\text{for $\mathcal{L}_n$-almost every $x\in\mathbb{R}^n$.}
\]
Hence the martingale dimension is $n$, recovering the dimension
of the Euclidean tangent space.

 The Heisenberg group provides an example in which the martingale
 dimension differs from the dimension of the underlying smooth manifold.
 We first describe its group structure and the associated Brownian motion.
 
 The \emph{Heisenberg group} $\mathbb{H}$ is $\mathbb{R}^3$ equipped
 with the multiplication
 \[
 (x,y,z)\odot(x',y',z')
 =
 \left(x+x',\,y+y',\,z+z'+\frac12(xy'-yx')\right).
 \]
 It is a non-commutative group whose identity is $(0,0,0)$, and
 \[
 (x,y,z)^{-1}=(-x,-y,-z).
 \]
 Lebesgue measure
 $\mathcal{L}_3$ is invariant under both left and right translations
 and hence is a Haar measure on $\mathbb{H}$.
 
 The left-invariant vector fields obtained by translating the
 coordinate directions $\partial_x$ and $\partial_y$ at the identity are
 \[
 V_1=\partial_x-\frac{y}{2}\partial_z,
 \qquad
 V_2=\partial_y+\frac{x}{2}\partial_z.
 \]
 Their span at each point is called the \emph{horizontal tangent space}.
 We equip these two-dimensional spaces with the inner product for which
 $V_1,V_2$ are orthonormal. Although they span only two directions,
 their Lie bracket supplies the third:
 \[
 [V_1,V_2]=\partial_z.
 \]
 
 Let $(B_t^{(1)},B_t^{(2)})_{t\geq0}$ be standard planar Brownian
 motion started at the origin, and define
 \[
 A_t=\frac12\int_0^t
 \left(B_s^{(1)}\,dB_s^{(2)}-B_s^{(2)}\,dB_s^{(1)}\right),
 \qquad
 X_t=(B_t^{(1)},B_t^{(2)},A_t).
 \]
 The process $A_t$ is called \emph{L\'evy's stochastic area}.
 To explain the terminology, consider the one-form
 \[
 \alpha=\frac12(x\,dy-y\,dx),
 \qquad d\alpha=dx\wedge dy.
 \]
 For a smooth closed planar curve, Stokes' theorem identifies the
 integral of $\alpha$ with its signed enclosed area. The stochastic integral defining $A_t$ is the
 corresponding area functional for a Brownian path. Its It\^o and
 Stratonovich interpretations agree, since
 $[B^{(1)},B^{(2)}]_t=0$.
 The process $X_t$ solves the Stratonovich equation
 \[
 dX_t=V_1(X_t)\circ dB_t^{(1)}
 +V_2(X_t)\circ dB_t^{(2)},
 \qquad X_0=(0,0,0).
 \]
 Thus it is driven by independent Brownian noises in the two
 orthonormal horizontal directions. It is called
 \emph{horizontal Brownian motion on the Heisenberg group},
 or simply Brownian motion on $\mathbb{H}$ in what follows.
 
 The group law also gives a direct analogue of the increment
 properties of Euclidean Brownian motion. For $0\leq s<t$,
 \[
 X_s^{-1}\odot X_t
 =
 \left(B_t^{(1)}-B_s^{(1)},\,
 B_t^{(2)}-B_s^{(2)},\,A_{s,t}\right),
 \]
 where
 \[
 A_{s,t}
 =
 \frac12\int_s^t
 \left(
 (B_u^{(1)}-B_s^{(1)})\,dB_u^{(2)}
 -
 (B_u^{(2)}-B_s^{(2)})\,dB_u^{(1)}
 \right).
 \]
 This increment depends only on the Brownian increments after time
 $s$. Consequently, it is independent of the past and has the same
 law as $X_{t-s}$. Thus $X_t$ has continuous paths and stationary,
 independent increments with respect to the group operation. These properties
 parallel those of Euclidean Brownian motion and justify the
 terminology ``Brownian motion on the Heisenberg group.''

 By It\^o's formula, the generator is
 \[
 L=\frac12(V_1^2+V_2^2).
 \]
 More explicitly, for every $f\in C_c^\infty(\mathbb{R}^3)$,
 \[
 f(X_t)-f(X_0)-\int_0^t Lf(X_s)\,ds
 \]
 is a martingale. The associated strongly local regular
 Dirichlet form is the closure on $L^2(\mathbb{H},\mathcal{L}_3)$ of
 \[
 \mathcal{E}(f,g)
 =
 \frac12\int_{\mathbb{H}}
 \left(V_1f\,V_1g+V_2f\,V_2g\right)\,d\mathcal{L}_3,
 \qquad f,g\in C_c^\infty(\mathbb{R}^3).
 \]
 Denote its domain by $\mathcal{F}$ and write
 \[
 \nabla_H f=(V_1f,V_2f)
 \]
 for the horizontal gradient. For $f\in\mathcal{F}$, these
 derivatives are understood in the weak sense. The energy measures
 satisfy
 \[
 \Gamma(f,g)
 =
 \frac12\,\nabla_H f\cdot\nabla_H g\,\mathcal{L}_3,
 \qquad f,g\in\mathcal{F}.
 \]
 
 In particular, $\mathcal{L}_3$ is energy dominant. As in the
 Euclidean example, cutoff coordinate functions show that it is
 minimal. Indeed, fix a bounded Euclidean ball $B$ and choose
 $\chi\in C_c^\infty(\mathbb{R}^3)$ equal to one on a neighbourhood
 of $\overline{B}$. The functions
 \[
 f_1(x,y,z)=\chi(x,y,z)x,
 \qquad
 f_2(x,y,z)=\chi(x,y,z)y
 \]
 satisfy
 \[
 \nabla_H f_1=(1,0),
 \qquad
 \nabla_H f_2=(0,1)
 \quad\text{on }B.
 \]
 In particular, $\Gamma(f_1,f_1)|_B=\frac12\mathcal{L}_3|_B$.
 A countable covering by such balls shows that every energy
 dominant measure dominates $\mathcal{L}_3$.
 
 Taking $\nu=\mathcal{L}_3$, for any $f_1,\ldots,f_N\in\mathcal{F}$
 we obtain, almost everywhere,
 \begin{align*}
 	\on{rank}
 	\left(\frac{d\Gamma(f_i,f_j)}{d\nu}(q)\right)_{i,j=1}^N
 	&=
 	\on{rank}
 	\left(\frac12\nabla_H f_i(q)\cdot\nabla_H f_j(q)\right)_{i,j=1}^N\\
 	&=
 	\dim\on{span}\{\nabla_H f_i(q):1\leq i\leq N\}
 	\leq2.
 \end{align*}
 Hence $p_H\leq2$ almost everywhere. For the two cutoff coordinate
 functions above, the matrix of energy densities on $B$ is
 $\frac12 I_2$, so $p_H\geq2$ almost everywhere on $B$. Therefore
 \[
 p_H(q)=2
 \qquad\text{for $\mathcal{L}_3$-almost every $q\in\mathbb{H}$,}
 \]
 and the martingale dimension is two. It thus recovers the
 dimension of the horizontal tangent space, even though
 $\mathbb{H}$ is a three-dimensional smooth manifold.

\subsection{Lipschitz differentiability spaces}
Rademacher's theorem states that every Lipschitz function
$f:\mathbb{R}^n\to\mathbb{R}$ is differentiable at Lebesgue-almost
every point. Lipschitz differentiability spaces abstract this phenomenon
to metric measure spaces: up to a null set, the space is covered by
countably many measurable sets, each equipped with a finite-dimensional
Lipschitz coordinate map with respect to which every Lipschitz function
is differentiable almost everywhere. We recall the definition below
and refer to \cite{Ch,Bate,bate2024fragment,BS13,Kei-diff,LiBate,EBchar}
for background and further discussion. Here metric measure spaces are complete, locally compact and separable with a Borel measure of full support that is finite and positive on balls.
 
 	Let $(X,d,\mu)$ be a metric measure space. Let $U\subset X$ be a
 	Borel set and let $\varphi:X\to\mathbb{R}^n$ be Lipschitz, where $n \ge 1$. We say
 	that a Lipschitz function $f:X\to\mathbb{R}$ is
 	\emph{differentiable with respect to $(U,\varphi)$ at $x\in U$}
 	if there exists a unique linear map
 	$df_x:\mathbb{R}^n\to\mathbb{R}$ such that
 	\[
 	\lim_{\substack{y\to x\\y\neq x}}
 	\frac{|f(y)-f(x)-df_x(\varphi(y)-\varphi(x))|}
 	{d(x,y)}
 	=0.
 	\]
 	The linear map $df_x$ is called the \emph{differential} of $f$
 	at $x$ with respect to $(U,\varphi)$.
 	
 	A pair $(U,\varphi)$ is called a \emph{Lipschitz differentiability
 		chart of dimension $n$} if $\mu(U)>0$ and every Lipschitz function
 	$f:X\to\mathbb{R}$ is differentiable with respect to $(U,\varphi)$
 	at $\mu$-almost every $x\in U$. The exceptional $\mu$-null set may depend on $f$.
 	
 	The space $(X,d,\mu)$ is called a \emph{Lipschitz differentiability
 		space} if it admits a countable collection of Lipschitz
 	differentiability charts $\{(U_i,\varphi_i)\}_i$, of dimensions
 	$n_i\geq1$, such that
 	\[
 	\mu\left(X\setminus\bigcup_i U_i\right)=0.
 	\]
 	Such a collection is called a \emph{measurable differentiable
 		structure}.
 	
 The dimensions of overlapping charts agree $\mu$-almost everywhere
 on their overlap. Consequently, the chart dimensions determine a
 measurable function $d_C:X\to\bN$, uniquely up to $\mu$-null sets,
 such that
 \[
 d_C(x)=n_i
 \qquad\text{for $\mu$-almost every $x\in U_i$, for each $i$.}
 \]
 Moreover, $d_C$ is independent, up to $\mu$-null sets, of the chosen
 measurable differentiable structure. We call $d_C$ the
 \emph{pointwise analytic dimension}.
 	The \emph{analytic dimension} (or \emph{Lipschitz differentiability
 		dimension}) of $(X,d,\mu)$ is
 	\[
 	\mu\text{-}\on{esssup}_{x\in X}d_C(x).
 	\]
 	Equivalently, it is the smallest $N\in\bN$ such that every
 	Lipschitz differentiability chart has dimension at most $N$,
 	with value $\infty$ if no such finite bound exists.
 
 \subsection{Examples of Lipschitz differentiability spaces}
 If $X=\mathbb{R}^n$ is equipped with the Euclidean distance and
 $\varphi:\mathbb{R}^n\to\mathbb{R}^n$ is the identity map, then
 the above notion coincides with classical differentiability.
 At a point $x$ where $f$ is differentiable, its differential is
 \[
 df_x(v)=\nabla f(x)\cdot v,
 \qquad v\in\mathbb{R}^n.
 \]
 
 Thus Rademacher's theorem implies that
 $(\mathbb{R}^n,\operatorname{id})$ is a Lipschitz differentiability
 chart for $\mathbb{R}^n$ equipped with Lebesgue measure.
 The identity chart has dimension $n$ and covers all of $\mathbb{R}^n$.
 Consequently, the pointwise analytic dimension satisfies
 \[
 d_C(x)=n
 \qquad\text{for $\mathcal{L}_n$-almost every $x\in\mathbb{R}^n$,}
 \]
 and its essential supremum, the analytic dimension, is $n$.

 Recall that the Heisenberg group $\mathbb H$ is $\mathbb R^3$ equipped with
 the group operation
 \[
 (x_1,y_1,z_1)\odot(x_2,y_2,z_2)
 =
 \left(x_1+x_2,\ y_1+y_2,\
 z_1+z_2+\frac12(x_1y_2-x_2y_1)\right).
 \]
 Its Carnot--Carath\'eodory distance $d_{CC}$ is left-invariant. The distance
 from the identity $(0,0,0)$ to $(p,q,r)$ is the infimum of
 \[
 \int_0^T\sqrt{\dot x(t)^2+\dot y(t)^2}\,dt
 \]
 over all absolutely continuous curves $(x(t),y(t))$ in $\mathbb R^2$
 that start at $(0,0)$, end at $(p,q)$, and satisfy
 \[
 \frac12\int_0^T
 \bigl(x(t)\dot y(t)-y(t)\dot x(t)\bigr)\,dt=r.
 \]
  Although
 $(\mathbb H,d_{CC})$ is homeomorphic to $\mathbb R^3$ and has Hausdorff
 dimension $4$, it has analytic dimension $2$. Indeed, the map
 \[
 \phi:\mathbb H\longrightarrow\mathbb R^2,
 \qquad \phi(x,y,z)=(x,y),
 \]
 is a global differentiability chart: for every Lipschitz function
 $f:\mathbb H\to\mathbb R$, at almost every $g\in\mathbb H$ there is a
 unique linear map $df_g:\mathbb R^2\to\mathbb R$ such that
 \[
 f(h)-f(g)
 =
 df_g\bigl(\phi(h)-\phi(g)\bigr)+o\bigl(d_{CC}(g,h)\bigr)
 \quad\text{as }h\to g.
 \]
 This follows from a version of Rademacher's theorem due to Pansu \cite{Pan}.
 Thus the pointwise dimension of the chart is $2$ almost everywhere,
 and the analytic dimension of $\mathbb H$ is $2$.

\subsection{PI spaces and Cheeger's theorem}

A large class of Lipschitz differentiability spaces arises from
Poincar\'e inequalities. Let $(X,d,\mu)$ be a metric measure space
such that every ball has positive, finite measure. We say that it
supports a $(1,p)$-Poincar\'e inequality if there are constants
$C>0$ and $\Lambda\geq 1$ such that, for every Lipschitz function
$f:X\to\mathbb R$ and every ball $B(x,r)$,
\[
\fint_{B(x,r)} |f-f_{B(x,r)}|\,d\mu
\leq
Cr\left(\fint_{B(x,\Lambda r)}
\operatorname{Lip}(f)^p\,d\mu\right)^{1/p},
\]
where
\[
\operatorname{Lip}(f)(y)
=
\limsup_{\substack{z\to y\\z\ne y}}
\frac{|f(z)-f(y)|}{d(z,y)},
\qquad
f_{B(x,r)}
=
\fint_{B(x,r)}f\,d\mu.
\]
If, in addition, $\mu$ is doubling, meaning that
\[
\mu(B(x,2r))\leq C_D\mu(B(x,r))
\quad\text{for all }x\in X,\ r>0,
\]
we call $(X,d,\mu)$ a $p$-PI space.

Cheeger's theorem \cite{Ch} states that a
complete $p$-PI space, with $1 \le p<\infty$, is a Lipschitz
differentiability space. Moreover, the dimensions of its charts
are uniformly bounded in terms of the doubling and Poincar\'e
inequality data. Thus Poincar\'e inequalities provide a way to
establish differentiability of Lipschitz functions without first
constructing coordinates explicitly.

The resulting class of spaces is quite broad. Laakso constructed,
for every $Q>1$, a $1$-PI space of Hausdorff dimension $Q$ \cite{Laa}.
The Heisenberg group $\mathbb H$, equipped with its
Carnot--Carath\'eodory metric and three-dimensional Lebesgue
measure, is another PI space; its Poincar\'e inequality follows
from Jerison's work \cite{Jer}.

Our understanding of PI spaces remains incomplete even when the
underlying topological space is $\bR^n$. By analogy with the study
of smooth structures on a fixed topological manifold, one may ask
which measurable differentiable structures can arise on $\bR^n$
when the metric and measure are allowed to vary. Here the metric
is required to induce the Euclidean topology, and the metric
measure space is required to satisfy the PI condition.
A first step toward such a classification is to determine the
possible analytic dimensions. Unlike a smooth structure, whose
dimension is fixed by the underlying topology, a measurable
differentiable structure need not have chart dimensions equal
to the topological dimension, as the Heisenberg group illustrates.
The following  two questions are due to Kleiner
and Schioppa \cite{KS}.

\begin{question}
	For each $n\geq2$, does there exist a complete PI space
	$(\bR^n,d,\mu)$ such that $d$ induces the Euclidean topology
	and the analytic dimension is one?   
\end{question}
Even the cases $n=2$ and $n=3$ are  open.
One can also ask which Hausdorff dimensions are compatible
with the PI condition and the prescribed topology.

\begin{question} 
	For each $n\geq2$ and $Q>n$, does there exist a complete
	PI space $(\bR^n,d,\mu)$ such that $d$ induces the Euclidean
	topology and $\mu$ is Ahlfors $Q$-regular? That is, do there
	exist such a space and a constant $C\geq1$ satisfying
	\[
	C^{-1}r^Q\leq\mu(B_d(x,r))\leq Cr^Q
	\qquad\text{for all $x\in\bR^n$ and $r>0$?}
	\]
\end{question}

The case $Q=n$ is provided by Euclidean space with Lebesgue
measure. When $n=3$, the case $Q=4$ is provided by the Heisenberg group. These questions ask how far the analytic and Hausdorff
dimensions can vary while the underlying topology remains fixed. There is a corresponding question for martingale dimension,
in which Gaussian heat kernel estimates provide a link between
the diffusion and the metric geometry as we will see below.

\section{Lecture 2: Equality under Gaussian heat kernel estimates}

Before comparing martingale and analytic dimensions, we highlight
a basic distinction between their definitions. Analytic dimension
depends on the metric through the notion of Lipschitz
differentiability, whereas martingale dimension is defined in terms
of the Dirichlet form and its energy measures, without reference
to a particular compatible metric. To relate these dimensions,
it is therefore natural to impose conditions that connect the
Dirichlet form to the metric. Heat kernel estimates provide such
a connection by describing the transition densities of the
associated diffusion in terms of distance and volume.
We refer the reader to \cite{Sal10} for an introduction to this
widely studied subject.

In this lecture, we compare martingale and analytic dimensions,
highlighting their similarities and differences, and discuss
their relationship with other notions of dimension, including
Hausdorff and spectral dimensions. Throughout Lecture 2, we assume that
$(X,d)$ is complete, separable, and has at least two points,
that the reference measure $m$ is Radon with full support,
and that the Dirichlet form $(\mathcal{E},\mathcal{F})$ on $L^2(X,m)$ is strongly local and regular. Furthermore, we assume that $m$ is a doubling measure on $(X,d)$.

 \subsection{Heat kernels and Gaussian estimates}
 
 Let $\{P_t\}_{t>0}$ denote the strongly continuous, symmetric
 Markov semigroup associated with $(\mathcal{E},\mathcal{F})$ on $L^2(X,m)$.
 The Dirichlet form can be recovered from the semigroup through
 \[
 \mathcal{F}=
 \left\{
 f\in L^2(X,m):
 \lim_{t\downarrow0}
 \frac{1}{t}\langle (I-P_t)f,f\rangle_{L^2(X,m)}<\infty
 \right\},
 \]
 and
 \[
 \mathcal{E}(f,f)=
 \lim_{t\downarrow0}
 \frac{1}{t}\langle (I-P_t)f,f\rangle_{L^2(X,m)},
 \qquad f\in\mathcal{F}.
 \]
 Thus, the Dirichlet form describes the infinitesimal behaviour
 of the semigroup.
 
 A \emph{heat kernel} for $\{P_t\}_{t>0}$ is a non-negative
 measurable function
 \[
 (t,x,y)\longmapsto p_t(x,y)
 \qquad\text{on $(0,\infty)\times X\times X$}
 \]
 such that, for every $t>0$ and $f\in L^2(X,m)$,
 \[
 P_tf(x)=\int_X p_t(x,y)f(y)\,m(dy)
 \qquad\text{for $m$-almost every $x\in X$.}
 \]
 It satisfies the symmetry and semigroup identities
 \[
 p_t(x,y)=p_t(y,x)
 \]
 and
 \[
 p_{t+s}(x,z)=
 \int_X p_t(x,y)p_s(y,z)\,m(dy),
 \]
 for each $t,s>0$ and almost every pair of spatial variables
 with respect to $m\otimes m$.
 Probabilistically, $p_t(x,y)$ is the transition density of
 the associated diffusion with respect to $m$.
 The existence of a heat kernel is an additional property;
 it does not follow from the Dirichlet form assumptions alone.
 
 \begin{definition}[Gaussian heat kernel estimates]
 	We say that $(\mathcal{E},\mathcal{F})$ satisfies \emph{two-sided Gaussian
 		heat kernel estimates} with respect to $(d,m)$ if it admits
 	a heat kernel and there exist constants
 	$c_1,c_2,C_1,C_2>0$ such that, for every $t>0$,
 	\begin{align*}
 		p_t(x,y)
 		&\geq
 		\frac{c_1}{m(B(x,\sqrt{t}))}
 		\exp\left(-C_1\frac{d(x,y)^2}{t}\right),\\
 		p_t(x,y)
 		&\leq
 		\frac{C_2}{m(B(x,\sqrt{t}))}
 		\exp\left(-c_2\frac{d(x,y)^2}{t}\right)
 	\end{align*}
 	for $m\otimes m$-almost every $(x,y)\in X\times X$.
 \end{definition}
 
 These estimates connect the diffusion to the metric and measure.
 The factor $m(B(x,\sqrt{t}))^{-1}$ gives the volume scale
 of the transition density, while the exponential factor describes
 its decay when $d(x,y)$ is large compared with $\sqrt{t}$.
 The term ``Gaussian'' refers to this dependence on
 $d(x,y)^2/t$; the volume factor need not be a power of $t$
 corresponding to an integer dimension.
 
 \begin{example}[Euclidean space]
 	For the Dirichlet form
 	\[
 	\mathcal{E}(f,g)=\int_{\bR^n}\nabla f\cdot\nabla g\,dx,
 	\qquad
 	\mathcal{F}=W^{1,2}(\bR^n),
 	\]
 	the heat kernel is
 	\[
 	p_t(x,y)=
 	\frac{1}{(4\pi t)^{n/2}}
 	\exp\left(-\frac{|x-y|^2}{4t}\right).
 	\]
 	Since $\mathcal{L}_n(B(x,\sqrt{t}))=\omega_n t^{n/2}$,
 	where $\omega_n$ is the volume of the Euclidean unit ball,
 	this kernel satisfies Gaussian estimates.
 \end{example}
 
 \begin{example}[The Heisenberg group]
 	The diffusion on $\bH$ introduced in Lecture~1, with generator
 	$\frac12(V_1^2+V_2^2)$, satisfies Gaussian estimates with
 	respect to the Carnot--Carath\'eodory distance $d_{CC}$ and
 	three-dimensional Lebesgue measure $\mathcal{L}_3$.
 	The Heisenberg dilations imply that
 	\[
 	\mathcal{L}_3(B_{CC}(x,r))=c r^4
 	\]
 	for a constant $c>0$. Consequently, the volume factor in
 	the Gaussian estimates is proportional to $t^{-2}$.
 	Thus, although the underlying space has topological dimension
 	three and the diffusion has martingale dimension two, its
 	heat kernel reflects volume growth of dimension four.
 \end{example}
 
 Gaussian estimates hold in many other settings; see \cite{Sal10}
 for background and references. Geometric examples include
 Brownian motion on connected Lie groups of polynomial volume
 growth, equipped with a left-invariant Riemannian metric and
 its Riemannian volume measure, as well as Brownian motion on
 complete connected Riemannian manifolds without boundary
 and with non-negative Ricci curvature, by the work of Li and Yau. Another classical class of examples comes from symmetric
 uniformly elliptic divergence-form operators on $\bR^n$.
 More precisely, the form
 \[
 \mathcal{E}(f,g)=\int_{\bR^n}
 \langle A(x)\nabla f(x),\nabla g(x)\rangle\,dx,
 \qquad \mathcal{F}=W^{1,2}(\bR^n),
 \]
 satisfies Gaussian estimates if $A$ is measurable and symmetric
 and, for some $0<\lambda\leq\Lambda<\infty$,
 \[
 \lambda|\xi|^2
 \leq \langle A(x)\xi,\xi\rangle
 \leq \Lambda|\xi|^2
 \]
 for almost every $x$ and every $\xi\in\bR^n$ \cite{Aro}.
 
 More generally, consider a complete connected weighted
 Riemannian manifold without boundary, with a smooth positive
 weight, equipped with its Riemannian distance and weighted
 volume measure $m$. If volume doubling and a scale-invariant
 $L^2$-Poincar\'e inequality hold, then the form
 \[
 \mathcal{E}(f,g)=\int_M\langle\nabla f,\nabla g\rangle\,dm,
 \]
 with its natural Sobolev domain, satisfies Gaussian estimates.
 This is a celebrated result of Grigor'yan and Saloff-Coste
 \cite{Gri,Sal92}; see also \cite{Stu} for an extension to
 strongly local Dirichlet spaces. Examples beyond the manifold
 setting include Laakso spaces equipped with their natural
 metrics, measures, and canonical Dirichlet forms.
 
 \subsection{Equality of the pointwise dimensions}
 Under Gaussian heat kernel bounds the pointwise versions of martingale and analytic dimensions agree \cite{murugananalytic}. In this case, a result of Koskela and Zhou implies that the reference measure $m$ is a minimal energy dominant measure \cite{KZ} and that the underlying metric measure space is a PI space. The following result is from \cite{murugananalytic}.
 
 \begin{theorem}\label{thm:equality}  
 	Let $(\mathcal{E},\mathcal{F})$ be a Dirichlet form on $L^2(X,m)$  satisfying two-sided Gaussian heat kernel estimates with respect to the metric $d$.  Let $p_H$ be the
 	pointwise index of the form and $d_C$ the pointwise dimension of Cheeger's
 	measurable differentiable structure of the underlying metric measure space $(X,d,m)$.  Then
 	\[
 	p_H(x)=d_C(x)
 	\qquad\text{for $m$-almost every }x\in X.
 	\]
 	In particular, the martingale dimension of the diffusion equals the analytic
 	dimension of $(X,d,m)$.
 \end{theorem}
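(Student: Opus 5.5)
The plan is to compare two objects living on the same set: the $m$-a.e.\ defined quadratic form $\Gamma$ acting on differentials of Lipschitz functions, and Cheeger's cotangent bundle. The first step collects the consequences of two-sided Gaussian estimates. By the result of Koskela and Zhou quoted before the statement, $m$ is a minimal energy dominant measure and $(X,d,m)$ is a $2$-PI space, so Cheeger's theorem gives a measurable differentiable structure $\{(U_i,\varphi_i)\}$. Moreover, the Gaussian estimates identify the intrinsic metric of $(\mathcal{E},\mathcal{F})$ with $d$ up to bi-Lipschitz change. Hence every Lipschitz function of compact support lies in $\mathcal{F}$, and there is a constant $C\ge1$ with
\[
C^{-1}\operatorname{Lip}(f)^2 \,m \le \Gamma(f,f) \le C \operatorname{Lip}(f)^2\, m .
\]
The upper bound comes from the intrinsic metric; the lower bound comes from the Poincar\'e inequality together with Cheeger's identification of $\operatorname{Lip} f$ with the minimal upper gradient. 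Since $\mathcal{F}\cap C_c(X)\cap\LIP$ is dense in $\mathcal{F}$, a Gram-matrix approximation shows that $p_H$ may be computed using only compactly supported Lipschitz functions.

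The second step works on a single chart $(U,\varphi)$ of dimension $n$, and aims to show that $\Gamma$ factors through the differential. I would prove that for $m$-a.e.\ $x\in U$ there is a positive semidefinite matrix $G(x)$ such that
\[
\frac{d\Gamma(f,g)}{dm}(x)=\langle G(x)\,df_x,dg_x\rangle
\quad\text{for all Lipschitz } f,g .
\]
Here I use a localized cutoff of $\varphi$ so that its components lie in $\mathcal{F}$. The idea is that if $df_x=0$ on a set $A$ of positive measure, then $\operatorname{Lip}(f)=0$ a.e.\ on $A$, because on a PI space $\operatorname{Lip} f(x)=|df_x|_x$ for a measurable norm. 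The comparison above then gives $\Gamma(f,f)(A)=0$. Applying this to $f-\sum_k a_k\varphi_k$, with $a$ approximated by locally constant coefficients on small pieces, and using the chain rule and strong locality of $\Gamma$, I get $\Gamma(f,g)=\Gamma(\varphi,\varphi)$ evaluated on $(df,dg)$. This shows $G=(d\Gamma(\varphi_k,\varphi_l)/dm)_{k,l}$, and hence $p_H\le n$ a.e.\ on $U$.

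The third step gives the reverse inequality: $G(x)$ is nondegenerate a.e. The lower bound $\Gamma(f,f)\ge C^{-1}\operatorname{Lip}(f)^2 m$ combined with $\operatorname{Lip} f(x)=|df_x|_x$ yields
\[
\langle G(x)v,v\rangle\ge C^{-1}|v|_x^2 \quad\text{for all } v\in\bR^n
\]
on a dense countable set of covectors realized by $\varphi$-linear combinations. By continuity this extends to all $v$. Since $|\cdot|_x$ is a genuine norm on $(\bR^n)^*$ (uniqueness of differentials), $G(x)$ has rank $n$, so $p_H\ge n$ on $U$. Covering $X$ by countably many charts gives $p_H=d_C$ $m$-a.e., and taking essential suprema gives the statement about the dimensions.

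I expect the main obstacle to be the two-sided comparison between $\Gamma(f,f)$ and $\operatorname{Lip}(f)^2 m$, particularly the lower bound. It requires extracting from the Gaussian lower bound a pointwise, not merely integrated, control of energy density by the local Lipschitz constant. My first attempt would go through the Koskela--Zhou identification of $\sqrt{d\Gamma(f,f)/dm}$ with the minimal $2$-weak upper gradient, followed by Cheeger's theorem that this equals $\operatorname{Lip} f$ a.e.\ on PI spaces.
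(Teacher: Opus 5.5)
Your proposal is correct and follows the paper's route. The energy--slope comparison (Koskela--Zhou combined with the bi-Lipschitz comparison of $d$ with the intrinsic metric and Cheeger's identification $g_f=\Lip f$) turns full rank of the energy matrix into positivity of $\Lip$ of nonzero linear combinations. Your chart-wise factorization $\frac{d\Gamma(f,g)}{dm}=\langle G\,df,dg\rangle$ with $G=\bigl(\frac{d\Gamma(\varphi_k,\varphi_l)}{dm}\bigr)$ is simply an explicit form of that linear-algebra step.

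One small correction: the fact that $v\mapsto\Lip(v\cdot\varphi)(x)$ is a norm for \emph{all} $v\in\bR^n$ at a.e.\ $x$ comes from Cheeger's theorem on PI spaces. Uniqueness of differentials alone does not give it, since it only controls countably many $v$ outside a common null set.
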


 The key bridge between the different dimensions is the following estimate, essentially due to Koskela and Zhou
 \cite{KZ} together with the comparison of the underlying metric with the intrinsic metric established in \cite{KM20}.
 
 \begin{proposition}[Energy--slope comparison]\label{prop:energy-slope}
 	Assume two-sided Gaussian heat kernel estimates.  Then every Lipschitz
 	function belongs locally to the form domain, the measure $m$ is minimal energy dominant, and there is $C\geq1$ such that
 	\[
 	C^{-1}\left(\frac{d\Gamma(f,f)}{d m}(x)\right)^{1/2}
 	\leq \Lip f(x)\leq
 	C\left(\frac{d\Gamma(f,f)}{d m}(x)\right)^{1/2}
 	\]
 	for every Lipschitz function $f:X \to \mathbb{R}$ and for $m$-almost every $x \in X$.
 \end{proposition}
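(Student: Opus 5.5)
The plan is to pass through the intrinsic (Dirichlet) metric
\[
d_{\mathcal E}(x,y)=\sup\{u(x)-u(y): u\in\mathcal{F}_{\mathrm{loc}}\cap C(X),\ \Gamma(u,u)\le m\},
\]
and to use the result of \cite{KM20} to reduce everything to statements about $d_{\mathcal E}$. Under two-sided Gaussian heat kernel estimates with respect to $d$, that result gives bi-Lipschitz equivalence $C^{-1}d\le d_{\mathcal E}\le Cd$. Since both inequalities in the proposition are invariant up to constants under a bi-Lipschitz change of metric, it suffices to prove them for $d_{\mathcal E}$. I would also use the standard consequences of Gaussian bounds on a strongly local regular form: volume doubling, and the scale-invariant $L^2$-Poincar\'e inequality $\int_B|f-f_B|^2\,dm\le Cr^2\int_{\Lambda B}d\Gamma(f,f)$ (Grigor'yan, Saloff-Coste, Sturm).

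The first step is the upper bound on the energy density together with membership in the domain. For $f$ that is $L$-Lipschitz with respect to $d_{\mathcal E}$, Sturm's theory (or Koskela--Zhou \cite{KZ} directly) gives $f\in\mathcal{F}_{\mathrm{loc}}$ and $\Gamma(f,f)\le L^2 m$. I would localize this statement. For a point $x$ and a radius $r$, I would modify $f$ outside $B(x,r)$: replace $f$ by a McShane-type extension of $f|_{B(x,r)}$ whose Lipschitz constant is $\operatorname{Lip}(f|_{B(x,r)})$. Strong locality ensures the energy measures agree on $B(x,r)$. This yields
\[
\frac{d\Gamma(f,f)}{dm}\le \operatorname{Lip}(f|_{B(x,r)})^2 \quad\text{a.e. on } B(x,r).
\]
Combining this with Lebesgue differentiation and letting $r\to0$ gives
\[
\left(\frac{d\Gamma(f,f)}{dm}\right)^{1/2}\le C\,\operatorname{lip} f\le C\,\Lip f
\]
almost everywhere. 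Some care is needed to pass from the local Lipschitz constant on balls to the pointwise $\Lip f$. I would handle this by decomposing $X$ into sets where the difference quotients $|f(y)-f(x)|/d(x,y)$ are uniformly controlled at small scales, which is a standard Egorov-type argument.

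The second step is the reverse inequality $\Lip f\le C(d\Gamma(f,f)/dm)^{1/2}$, which I expect to be the main obstacle. Write $g=(d\Gamma(f,f)/dm)^{1/2}\in L^2_{\mathrm{loc}}$. The Poincar\'e inequality and a telescoping argument over dyadic balls give the pointwise Haj\l asz-type estimate
\[
|f(y)-f(z)|\le C\,d(y,z)\bigl(M_{R}g^2(y)^{1/2}+M_Rg^2(z)^{1/2}\bigr)
\]
for Lebesgue points $y,z$, where $M_R$ is a restricted maximal function. This only yields $\Lip f\le C(Mg^2)^{1/2}$, which is not pointwise. To localize, I would apply the inequality to $f-\ell$, where $\ell$ is a suitable competitor; equivalently, I would use the self-improvement of the Poincar\'e inequality (Keith--Zhong) together with Cheeger's result that on a PI space $\Lip f$ equals the minimal upper gradient almost everywhere. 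Concretely, I would show that $Cg$ is a $2$-weak upper gradient of $f$. To do this, I would approximate by truncations $u_\epsilon=\min\{f,\ldots\}$ and verify the upper gradient inequality along curves through the intrinsic metric: for $\eta\ge g$ continuous, the function $u$ with $\Gamma(u,u)\le \eta^2 m$ satisfies $|u(\gamma(1))-u(\gamma(0))|\le\int_\gamma\eta$, by the definition of $d_{\mathcal E}$ localized with a conformal weight. Once $Cg$ is known to be an upper gradient, Cheeger's theorem $\Lip f=g_f$ (the minimal weak upper gradient) on PI spaces gives $\Lip f\le Cg$ almost everywhere.

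The third step is minimal energy dominance of $m$. By the first step $\Gamma(f,f)\ll m$ for Lipschitz $f$, and by density (regularity) together with the fact that Lipschitz functions are dense in $\mathcal F$ under Gaussian bounds, this extends to all of $\mathcal F$. For minimality, take $f=d_{\mathcal E}(\cdot,x_0)$ cut off to compact sets. Its slope is at least $1$ almost everywhere, since along geodesics (the space is a length space) it decreases at unit rate. The second step then gives $d\Gamma(f,f)/dm\ge C^{-2}>0$ a.e., so any energy dominant measure must dominate $m$ on a countable cover.
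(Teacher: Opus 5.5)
Your reduction to the intrinsic metric via \cite{KM20}, followed by a Koskela--Zhou type comparison, is the route the paper indicates; the paper gives no argument beyond citing \cite{KZ} and \cite{KM20}. Your first and third steps are fine in outline. In the first step, ball-by-ball localization only controls $d\Gamma(f,f)/dm$ by the asymptotic Lipschitz constant. The Egorov-type sets you then need are not open, so you must use locality of energy measures on Borel sets, namely $\Gamma(u,u)(\{\widetilde u=0\})=0$ for $u=f-\tilde f$, rather than strong locality on open sets.

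The genuine gap is in your second step. Suppose every continuous $\eta$ with $\Gamma(f,f)\le\eta^2m$ is an upper gradient of $f$. This at best yields $g_f(x)\le\operatorname{ess\,limsup}_{y\to x}g(y)$, the upper semicontinuous envelope of $g=(d\Gamma(f,f)/dm)^{1/2}$. That envelope can exceed $g$ on a set of positive measure. For example, take $f(t)=\int_0^t\one_U$ on $\bR$ with $U$ a dense open set of small measure: then $g=\one_U$, while every continuous majorant is $\ge1$ everywhere. Switching to the lower semicontinuous majorants from Vitali--Carath\'eodory does not help. Lower semicontinuity gives no local upper bound on $\Gamma(f,f)$, and a local upper bound is exactly what the localized intrinsic-metric (conformal weight) argument needs. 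So you have not shown that $Cg$ is a $2$-weak upper gradient, and Cheeger's identity $\Lip f=g_f$ cannot be brought to bear.

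The maximal-function route you dismissed does give a pointwise bound, because the maximal functions are restricted to scale $d(x,y)$. The form Poincar\'e inequality holds directly with respect to $d$ under Gaussian bounds. So Lemma~\ref{lem:PIptwise} with $\Psi(r)=r^2$ gives
\[
|f(x)-f(y)|\le C\,d(x,y)\Bigl(\bigl(M_{Ad(x,y)}\Gamma(f,f)(x)\bigr)^{1/2}+\bigl(M_{Ad(x,y)}\Gamma(f,f)(y)\bigr)^{1/2}\Bigr).
\]
Let $x$ be a Lebesgue point of $h:=d\Gamma(f,f)/dm$. The first maximal term tends to $h(x)$. Lemma~\ref{lem:weak-maximal} applied to $|h-h(x)|\,m$ shows that the set of $y\in B(x,r)$ with $M_{Ar}\Gamma(f,f)(y)>h(x)+\epsilon$ has measure $o(m(B(x,r)))$. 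Since $f$ is Lipschitz and $m$ is doubling, every $y$ with $d(x,y)=r$ small lies within $\eta r$ of a point outside this set. Letting $\eta,\epsilon\to0$ gives $\Lip f(x)\le 2C\,h(x)^{1/2}$ for $m$-almost every $x$. This needs neither upper gradients nor Cheeger's theorem, and it also supplies the lower bound on $d\Gamma/dm$ that your minimality argument in the third step relies on.
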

 
 This proposition says that the metric and energy notions of
 the size of a first-order derivative are quantitatively equivalent.
 In particular, for Lipschitz functions $f_1,\ldots,f_N$, the matrix
 \[
 \left(\frac{d\Gamma(f_i,f_j)}{dm}(x)\right)_{i,j=1}^N
 \]
 has full rank if and only if
 $\Lip(\sum_{i=1}^N a_i f_i)(x)>0$ for every non-zero
 $a\in\bR^N$, at $m$-almost every $x$.
 Thus, independence can be expressed either through the rank
 of the energy matrix or through the pointwise slopes of
 linear combinations. This observation is behind the proof of Theorem \ref{thm:equality}.
 
 \subsection{Estimates on martingale dimension}
Cheeger conjectured that the analytic dimension of a PI space
is bounded above by its Hausdorff dimension \cite{Ch}.
De Philippis, Marchese, and Rindler \cite{DMR} proved the
stronger statement that, for every $n$-dimensional chart
$(U,\phi)$ of a Lipschitz differentiability space $(X,d,m)$,
\[
\phi_*(\one_U\,m)\ll\mathcal{L}_n.
\]
Since $m(U)>0$, this implies that $\mathcal{L}_n(\phi(U))>0$,
and hence $\dim_H\phi(U)=n$. As Lipschitz maps do not increase
Hausdorff dimension,
\[
n=\dim_H\phi(U)\leq\dim_H U\leq\dim_H X.
\]
Taking the supremum over the chart dimensions gives the
desired bound on analytic dimension. Combining this bound with Theorem~\ref{thm:equality}, we obtain
that, under two-sided Gaussian heat kernel estimates, the martingale
dimension of a strongly local regular Dirichlet space is bounded
above by the Hausdorff dimension of the underlying metric space.

 \subsection{Sub-Gaussian heat kernel estimates}
 
 Gaussian estimates are a special case of a broader class of
 heat kernel estimates that arise naturally in the study of
 diffusions on fractals \cite{Bar}. 
 
 \begin{definition}[Sub-Gaussian heat kernel estimates]
 	Let $\beta\in[2,\infty)$. We say that $(\mathcal{E},\mathcal{F})$ satisfies
 	\emph{two-sided sub-Gaussian heat kernel estimates with walk
 		dimension $\beta$} if it admits a heat kernel and there exist
 	constants $c_1,c_2,C_1,C_2>0$ such that, for every $t>0$,
 	\begin{align*}
 		p_t(x,y)
 		&\geq
 		\frac{c_1}{m(B(x,t^{1/\beta}))}
 		\exp\left(
 		-C_1\left(\frac{d(x,y)^\beta}{t}\right)^{1/(\beta-1)}
 		\right),\\
 		p_t(x,y)
 		&\leq
 		\frac{C_2}{m(B(x,t^{1/\beta}))}
 		\exp\left(
 		-c_2\left(\frac{d(x,y)^\beta}{t}\right)^{1/(\beta-1)}
 		\right)
 	\end{align*}
 	for $m\otimes m$-almost every $(x,y)\in X\times X$.
 \end{definition}
 
 When $\beta=2$, these are precisely the Gaussian estimates.
 For general $\beta$, the characteristic spatial scale at
 time $t$ is $t^{1/\beta}$, corresponding to a time scale
 $r^\beta$ for motion over distance $r$.
 
 Suppose additionally that $m$ is \emph{Ahlfors $\alpha$-regular},
 meaning that
 \[
 m(B(x,r))\asymp r^\alpha
 \qquad
 \text{for all $x\in X$ and $0<r<\diam(X,d)$,}
 \]
 with comparison constants independent of $x$ and $r$.
 We call $\alpha$ the \emph{volume growth exponent}; in this
 case, $\alpha=\dim_H(X,d)$.
 The \emph{spectral dimension} is
 \[
 d_s:=\frac{2\alpha}{\beta}.
 \]
 This terminology reflects the on-diagonal behaviour of the
 heat kernel: using its continuous version, the estimates give
 \[
 p_t(x,x)\asymp t^{-\alpha/\beta}=t^{-d_s/2},
 \qquad 0<t<\diam(X,d)^\beta.
 \]
 Thus, under Gaussian estimates, $d_s=\dim_H(X,d)$, whereas
 for $\beta>2$ the spectral dimension is strictly smaller
 than the Hausdorff dimension.
 
 A classical example is Brownian motion on the standard
 Sierpi\'nski gasket. Barlow and Perkins  \cite{BP} established
 sub-Gaussian estimates with
 \[
 \alpha=\frac{\log3}{\log2},
 \qquad
 \beta=\frac{\log5}{\log2},
 \qquad
 d_s=\frac{2\log3}{\log5}\in(1,2).
 \]
 The work of Barlow and Bass also established sub-Gaussian
 estimates for Brownian motion on Sierpi\'nski carpet and higher dimensional generalizations called generalized Sierpi\'nski carpets \cite{BB99}.
 For generalized Sierpi\'nski carpets, the walk dimension
 is strictly greater than two, although its exact value
 is generally unknown \cite{KajinoWalk}.  
 
 We now turn to martingale dimension in this broader setting.
 Recall that the martingale dimension $d_m$ is the essential
 supremum of Hino's pointwise index with respect to a minimal
 energy dominant measure. If $\mathcal{E}\not\equiv0$, then
 \[
 d_m\geq1.
 \]
 Obtaining stronger lower bounds for specific fractal diffusions
 is much more difficult. In particular, the following question
 remains open.
 
 \begin{question}
 	Is there a generalized Sierpi\'nski carpet whose canonical
 	self-similar Dirichlet form has martingale dimension strictly
 	greater than one?
 \end{question}
 
 Hino \cite{Hin13a} proved the bound $d_m\leq d_s$ for a class
 of self-similar spaces and Dirichlet forms. We expect this bound
 to hold without any self-similarity assumption; see also Hino's
 recent work \cite{Hin25} for progress toward determining
 martingale dimension beyond the self-similar setting. The
 following conjecture may be viewed as a counterpart of the
 bound on analytic dimension by Hausdorff dimension for PI spaces \cite{DMR}.
 
 \begin{conjecture}\label{c:ub-mart}
 	Let $(X,d,m)$ be an Ahlfors $\alpha$-regular metric measure
 	space, and let $(\mathcal{E},\mathcal{F})$ be a strongly local, regular
 	Dirichlet form on $L^2(X,m)$ satisfying two-sided sub-Gaussian
 	heat kernel estimates with walk dimension $\beta\geq2$.
 	Then
 	\[
 	d_m\leq d_s=\frac{2\alpha}{\beta}.
 	\]
 \end{conjecture}
 
 When $\beta=2$, this bound follows from
 Theorem~\ref{thm:equality} and the bound on analytic dimension
 by Hausdorff dimension. The conjecture asks whether spectral
 dimension provides the corresponding upper bound in the
 sub-Gaussian setting. The following recent result of the
 authors \cite{ErikssonBiqueMuruganEID} gives the weaker
 qualitative conclusion that the martingale dimension is finite.
 
 \begin{theorem} \label{t:finite-mtg-dim}
 	Let $(X,d,m)$ be a metric measure space with $m$ doubling,
 	and let $(\mathcal{E},\mathcal{F})$ be a strongly local, regular Dirichlet
 	form on $L^2(X,m)$ satisfying two-sided sub-Gaussian heat
 	kernel estimates with walk dimension $\beta$. Then the
 	martingale dimension of $(\mathcal{E},\mathcal{F})$ is finite.
 \end{theorem}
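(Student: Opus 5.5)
The plan is to prove finiteness through a different, metric-flavoured structure, and to route the argument through the energy image density (EID) property announced in the abstract.

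\textbf{Step 1: replace $d$ by a $\beta$-adapted geometry.} Under two-sided sub-Gaussian estimates the metric $d$ is not the intrinsic metric. In fact, energy measures are singular with respect to $m$ when $\beta>2$, so the Koskela--Zhou comparison of Proposition~\ref{prop:energy-slope} is unavailable. Instead I would use the known consequences of the sub-Gaussian estimates: volume doubling, the Poincar\'e inequality $\int_B|f-f_B|^2\,dm\le Cr^\beta\,\Gamma(f,f)(\Lambda B)$, and the cutoff Sobolev inequality. Together these give a rich supply of ``harmonic-type'' test functions at every scale with controlled energy.

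\textbf{Step 2: the pointwise index is bounded by a local rank computed from finitely many functions at each scale.} Let $\nu$ be a minimal energy dominant measure and fix $f_1,\dots,f_N\in\mathcal F$. I want to show that $\operatorname{rank}(d\Gamma(f_i,f_j)/d\nu)\le K$ $\nu$-a.e., with $K$ depending only on the doubling and heat kernel constants. The key input I plan to use is an EID-type statement: for $F=(f_1,\dots,f_N)$, the pushforward $F_*(\mathbf 1_{\{\text{rank}=k\}}\nu)$ is absolutely continuous with respect to $\mathcal L_k$ on the $k$-dimensional pieces. Combining this with a blow-up argument at a $\nu$-density point $x$ where the rank equals $k$, I would rescale the space by balls $B(x,r)$ and the functions by $r^{-?}$ normalized by energy. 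The aim is a tangent object: a doubling space carrying $k$ functions whose energy-matrix is nondegenerate on a set of positive measure, and whose image has positive $\mathcal L_k$ measure.

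\textbf{Step 3: a packing bound.} Positivity of $\mathcal L_k(F(B))$ at small scales, together with H\"older continuity of finite-energy harmonic-type functions (from the elliptic Harnack inequality implied by the heat kernel bounds), bounds the $k$-dimensional volume of the image. Roughly, $\mathcal L_k(F(B(x,r)))\lesssim (\operatorname{osc}_{B}F)^k$. A covering by $\asymp 2^{j\,\alpha_D}$ balls of radius $2^{-j}r$, where $\alpha_D=\log_2 C_D$ is the doubling exponent, then forces $k\cdot\theta\le\alpha_D$ for a H\"older exponent $\theta$ of suitably normalized energy-minimizing replacements. This gives $k\le \alpha_D/\theta<\infty$, uniformly.

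\textbf{Main obstacle.} The main obstacle is Step 2, the EID/absolute-continuity input: proving $F_*(\mathbf 1\,\nu)\ll\mathcal L_k$ for strongly local forms without any smooth structure (essentially a Dirichlet-form analogue of De Philippis--Marchese--Rindler). The approach I would try first is to transfer the rank condition into $k$ independent directions of ``energy currents'' along which $\nu$ disintegrates, and then apply the DMR theorem on normal currents. The delicate point is quantifying the oscillation lower bound in Step 3 from the Poincar\'e inequality, so that it holds at generic points rather than on average.
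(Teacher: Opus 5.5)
\textbf{Where you agree with the paper.} Your overall architecture is the paper's. You apply the energy image density property for a minimal energy dominant measure $\nu$ to get a nonzero image measure $\ll\mathcal{L}_k$. You then play positivity of the Lebesgue measure of the image against a H\"older upper bound and a covering count, which gives $k\le\alpha_D/\theta$; the paper phrases this as $k\le\dim_H(X,d)/\theta$, and it is the same argument. What you call the main obstacle is, in the paper, an available theorem for strongly local regular forms. Its proof goes through the currents $T_{f,g}$, which are normal for $g\in D(A)$, the De Philippis--Rindler theorem, and an approximation from $D(A)$, so your sketch of it is on the right track. The blow-up to a tangent object in Step 2 and the pointwise oscillation lower bound you call delicate play no role. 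Only the upper H\"older bound and positivity of $\mathcal{L}_k$ of the image are used. Also, EID must be applied to a $k$-tuple whose $k\times k$ Gram minor is positive on a set of positive $\nu$-measure. A statement about $\mathcal{L}_k$ for an $\mathbb{R}^N$-valued $F$ of rank $k$ is not meaningful.

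\textbf{The gap: making $F$ H\"older continuous.} The functions $f_1,\dots,f_N$ witnessing $p_H\ge k$ are arbitrary elements of $\mathcal{F}$. They need not have continuous versions, so your covering estimate for $\mathcal{L}_k(F(B))$ does not apply to them. Your proposed remedy, H\"older continuity of harmonic functions via the elliptic Harnack inequality applied to ``energy-minimizing replacements,'' does not work. Harmonic replacement changes the energy measures $\Gamma(f_i,f_j)$ entirely, and nothing guarantees the replaced tuple still has rank $k$ anywhere. Moreover, functions harmonic on a fixed ball form an $\mathcal{E}_1$-closed proper subspace, not a dense class.

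\textbf{What the paper supplies instead.} Two ingredients are needed.
\begin{enumerate}[(i)]
\item An $\mathcal{E}_1$-dense class of functions with $\theta$-H\"older versions, with $\theta$ depending only on the structural constants. The paper uses $P_th$ with $t>0$ and $h\in\mathcal{F}\cap L^\infty$; their regularity comes from the heat kernel H\"older estimates of Barlow--Grigor'yan--Kumagai implied by the sub-Gaussian bounds.
\item A stability argument. If $f_i^{(n)}\to f_i$ in $\mathcal{E}_1$, then bilinearity and Cauchy--Schwarz give $\Gamma(f_i^{(n)},f_j^{(n)})\to\Gamma(f_i,f_j)$ in total variation. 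Hence the densities with respect to $\nu$ converge in $L^1(\nu)$, and $\nu$-a.e.\ along a subsequence. If the chosen $k\times k$ minor is positive on a set $S$ with $0<\nu(S)<\infty$, the approximants' minor is positive on a subset of $S$ of positive $\nu$-measure for large $n$.
\end{enumerate}
Only after this reduction can you apply EID to a H\"older map and run your covering count, localized to a ball on which it is $\theta$-H\"older.
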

 
 Hino's bound and known constructions further suggest the
 following conjectural characterization of the possible triples
 $(\alpha,\beta,d_m)$.
 
 \begin{conjecture}
 	Let $\alpha\geq1$, $\beta\geq2$, and $d_m\in\bN$. There exists
 	an Ahlfors $\alpha$-regular metric measure space carrying a
 	strongly local, regular Dirichlet form with two-sided
 	sub-Gaussian heat kernel estimates of walk dimension $\beta$
 	and martingale dimension $d_m$ if and only if
 	\[
 	2\leq\beta\leq1+\alpha
 	\qquad\text{and}\qquad
 	1\leq d_m\leq\frac{2\alpha}{\beta}.
 	\]
 \end{conjecture}
 
 For $d_m=1$, the existence assertion is established by the
 Laakso-type constructions in \cite{Mur-inv}, which realize
 every pair $\alpha\geq1$ and $2\leq\beta\leq1+\alpha$ with
 martingale dimension one.
 
 There are also examples of spaces satisfying sub-Gaussian heat
 kernel estimates for which the form domain contains no
 non-constant Lipschitz functions. This indicates that Lipschitz
 functions need not capture the first-order structure of a
 general strongly local Dirichlet form. In particular, analytic
 dimension is not generally suitable for describing the
 martingale dimension of fractal diffusions with $\beta>2$.
 
 For example, the standard Sierpi\'nski gasket and Sierpi\'nski
 carpet, equipped with their Euclidean metrics and Hausdorff
 measures, are not Lipschitz differentiability spaces. Indeed,
 they are Ahlfors regular of non-integer Hausdorff dimension and
 are bi-Lipschitz embedded in Euclidean space, whereas a theorem
 of Kell and Mondino \cite{Kell-Mon} asserts that a Lipschitz
 differentiability space that admits such an embedding must be
 rectifiable.
 
 More generally, Theorem~\ref{t:diff-space} shows that if a
 metric measure space carries a strongly local, regular
 Dirichlet form satisfying two-sided sub-Gaussian heat kernel
 estimates with walk dimension $\beta>2$, then it is not a
 Lipschitz differentiability space. This is in sharp contrast
 with the Gaussian case $\beta=2$ considered in
 Theorem~\ref{thm:equality}.

\section{Lecture 3: The energy image density property}

A common mechanism behind several of the dimension bounds
discussed in Lecture~2 is an image-density principle:
the presence of sufficiently many independent first-order
directions forces an associated image measure on $\bR^n$
to be absolutely continuous with respect to Lebesgue measure.
In the setting of Dirichlet forms, this principle takes the
form of the energy image density (EID) property. Versions of
this idea play an important role in the bounds for analytic
dimension in \cite{DMR} and for martingale dimension in
\cite{Hin13a,ErikssonBiqueMuruganEID}.

In this lecture, we trace the origins of the EID property to
Malliavin's proof of H\"ormander's hypoellipticity theorem \cite{Hor,Mal}.
We then introduce the EID conjecture formulated by Bouleau
and Hirsch \cite{bouleauhirsch}, explain its recent resolution
in \cite{ErikssonBiqueMuruganEID}, and discuss how the EID
property yields upper bounds on martingale and analytic
dimensions.

\subsection{H\"ormander's theorem and Malliavin's criterion}

A linear differential operator $L$ with smooth coefficients is
called \emph{hypoelliptic} if, for every open set
$U\subset\bR^n$ and every distribution $u\in\mathcal{D}'(U)$,
\[
Lu\in C^\infty(U)
\qquad\Longrightarrow\qquad
u\in C^\infty(U).
\]
Elliptic operators, such as the Laplacian, are hypoelliptic;
Weyl's lemma is a classical manifestation of this fact. The heat
operator $\partial_t-\Delta$ is also hypoelliptic, whereas the
wave operator $\partial_{tt}-\Delta$ is not.

An important non-elliptic example is the Kolmogorov operator
\[
K=\partial_{vv}+v\partial_x-\partial_t,
\]
introduced by Kolmogorov in 1934. Here $t$, $x$, and $v$
represent time, position, and velocity, respectively. This
operator is associated with the diffusion
\[
dX_s=V_s\,ds,
\qquad
dV_s=\sqrt{2}\,dB_s,
\]
where $B$ is a one-dimensional Brownian motion. Randomness acts
directly only in the velocity variable, but is transmitted to
the position variable through the equation $dX_s=V_s\,ds$.
Nevertheless, $K$ is hypoelliptic. H\"ormander's theorem explains this phenomenon. Consider an
operator
\[
L=\sum_{j=1}^d V_j^2+V_0,
\]
where $V_0,V_1,\ldots,V_d$ are smooth real vector fields on an
open set $U\subset\bR^n$. Recall that the Lie bracket of two
vector fields is defined by
\[
[V,W]=VW-WV.
\]
H\"ormander's bracket condition requires that the vector fields
$V_0,V_1,\ldots,V_d$, together with all of their iterated Lie
brackets, span the tangent space at every point of $U$.

\begin{theorem}[H\"ormander \cite{Hor}]
	If H\"ormander's bracket condition holds, then
	\[
	L=\sum_{j=1}^d V_j^2+V_0
	\]
	is hypoelliptic.
\end{theorem}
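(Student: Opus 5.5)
Since the paper invokes this result to motivate Malliavin's criterion, I would prove Hörmander's theorem along Malliavin's probabilistic route rather than through subelliptic estimates. It is enough to show that for every $x\in U$ and $t>0$ the law of the diffusion $X_t^x$ generated by $L$ (stopped before exiting, or with the fields cut off so they are globally bounded with bounded derivatives) has a $C^\infty$ density $p_t(x,y)$, jointly smooth in $(t,x,y)$ off the diagonal. Fix this reduction first. Apply it to the space–time operator $L-\partial_t$, together with its formal adjoint, which again has Hörmander form. Smoothness of the fundamental solution away from the diagonal then gives hypoellipticity by the standard parametrix/localization argument: write $u=\chi u$, represent $\chi u$ through the kernel applied to $L(\chi u)$, and use that $L(\chi u)-\chi Lu$ is supported away from the point where regularity is tested. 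This reduction is classical distribution theory and I would only sketch it.

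The analytic core runs on Wiener space. I would realize $X_t^x$ as the solution of the Stratonovich SDE
\[
dX_s=\sum_{j=1}^d\sqrt2\,V_j(X_s)\circ dB^{(j)}_s+V_0(X_s)\,ds .
\]
Then $X_t^x$ is Malliavin smooth, meaning it lies in $\mathbb{D}^\infty$, and its Malliavin covariance is
\[
\gamma_t=\sum_j\int_0^t J_{s\to t}V_j(X_s)\otimes J_{s\to t}V_j(X_s)\,ds,
\]
where $J$ is the Jacobian flow. Malliavin's criterion states that if $(\det\gamma_t)^{-1}\in L^p$ for all $p$, then integration by parts on Wiener space gives $|\mathbb{E}[\partial^\alpha\varphi(X_t)]|\le C_\alpha\|\varphi\|_\infty$. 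Fourier inversion then yields a $C^\infty$ density, and differentiating in $x$ and $t$ gives joint smoothness. This part is a routine bookkeeping of Sobolev norms.

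The main obstacle is the nondegeneracy estimate. I need, for each unit vector $\xi$,
\[
\mathbb{P}\bigl(\langle\gamma_t\xi,\xi\rangle<\varepsilon\bigr)\le C_p\varepsilon^p
\]
uniformly in $\xi$, which gives all negative moments of $\det\gamma_t$ via a finite net on the sphere. My plan is to pull everything back to time zero by writing $\tilde\gamma_t=\sum_j\int_0^t\langle J_{0\to s}^{-1}V_j(X_s),\xi\rangle^2ds$. For any vector field $W$, the process $Z^W_s=\langle J_{0\to s}^{-1}W(X_s),\xi\rangle$ is a semimartingale whose martingale parts involve $Z^{[V_j,W]}$ and whose drift involves $Z^{[V_0,W]}$ plus Itô corrections. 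The key tool is a quantitative Norris lemma: if a semimartingale $Y_s=y+\int a\,ds+\int u\,dB$ is small in $L^2[0,t]$, then with overwhelming probability both $a$ and $u$ are small. Iterating this along brackets shows that if $\int_0^t|Z^{V_j}|^2\,ds<\varepsilon$ for all $j$, then every $Z^W$ in the bracket family is small, at the cost of a power $\varepsilon^{q_k}$ at level $k$. But at $s=0$ the bracket condition gives $\sum_W\langle W(x),\xi\rangle^2\ge c>0$ over finitely many brackets. By compactness of the sphere, $c$ is uniform on a neighbourhood of $x$. This is a contradiction except on an event of probability $O(\varepsilon^p)$. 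Exactly here the hypotheses are used. The drift field $V_0$ enters only through brackets $[V_0,\cdot]$, matching Hörmander's condition in the space–time setting. The delicate part is tracking exponents and the short-time localization, for which I would use exit-time tail bounds, and I expect this to take most of the work.
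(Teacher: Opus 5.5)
The paper does not prove this theorem; it cites H\"ormander. Your outline is the Malliavin route the paper sketches, but that route proves the result under a stronger hypothesis than the one stated.

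\textbf{The gap: your argument uses the parabolic bracket condition, not the stated one.} Your nondegeneracy step only uses the family generated from $V_1,\dots,V_d$ by brackets with $V_0,\dots,V_d$; as you note, ``$V_0$ enters only through brackets $[V_0,\cdot]$''. That is the parabolic condition, which the paper lists separately as the one giving smooth densities of $X_t$. The theorem assumes only that $V_0,V_1,\dots,V_d$ and their brackets span, so $V_0$ itself may supply a direction. This hypothesis is strictly weaker, and under it your intermediate goals are false, not merely hard.

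Take $V_1=\partial_x$ and $V_0=\partial_y$ on $\mathbb{R}^2$. Then $L=\partial_x^2+\partial_y$ satisfies the stated condition and is hypoelliptic (up to sign, it is the heat operator in the time variable $-y$). Yet:
\begin{enumerate}[(i)]
\item $X_t=(x+\sqrt2B_t,\,y+t)$ has law carried by a line, so it has no density.
\item Since $[V_0,V_1]=0$, your bracket family is just $\{\partial_x\}$, and $\sum_W\langle W(x),\xi\rangle^2=0$ for $\xi=e_y$.
\item $\partial_t-L$ is not hypoelliptic at all: $u=h(t+y)$ with $h$ rough solves $(\partial_t-L)u=0$.
\end{enumerate}
So the chain ``smooth $p_t$ $\Rightarrow$ $\partial_t-L$ hypoelliptic $\Rightarrow$ $L$ hypoelliptic'' cannot even start.

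The paper's Kolmogorov example behaves the same way when read as an operator $\sum V_j^2+V_0$ on $\mathbb{R}^3$. Its $t$-direction comes only from $V_0=v\partial_x-\partial_t$, and the $t$-coordinate of the associated diffusion is deterministic. Your argument reaches this example only after rewriting it as $L'-\partial_t$ with $L'=\partial_{vv}+v\partial_x$, which satisfies the restricted condition. A general $V_0$ admits no such splitting.

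\textbf{What is missing} is a mechanism in which $V_0$ counts as a spanning direction on its own. As it stands, your plan is sound for the theorem under the parabolic condition: Malliavin integration by parts, negative moments via the Norris lemma, localization, and the parametrix step. You should either state that restricted result or add a separate argument for the general case. Two options:
\begin{enumerate}[(a)]
\item H\"ormander's (or Kohn's) subelliptic estimate $\|u\|_{(\varepsilon)}\le C(\|Lu\|+\|u\|)$ for $u\in C_c^\infty(K)$, whose proof allows $V_0$ among the generating fields.
\item A probabilistic argument built on a time-integrated kernel (resolvent or Green function) instead of $p_t$, where variation in the time variable is used to produce the $V_0$ direction. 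For $L=\partial_x^2+\partial_y$ this kernel is smooth off the diagonal even though $p_t$ is not a function.
\end{enumerate}
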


For the Kolmogorov operator, take
\[
V_0=v\partial_x-\partial_t,
\qquad
V_1=\partial_v.
\]
Since
\[
[V_0,V_1]=-\partial_x,
\]
the vector fields $V_0$, $V_1$, and $[V_0,V_1]$ span the
$t$, $v$, and $x$ directions. Thus H\"ormander's theorem
implies that the Kolmogorov operator is hypoelliptic.

There is a corresponding probabilistic formulation. Consider
the Stratonovich stochastic differential equation
\[
dX_t
=
\sqrt{2}\sum_{i=1}^d V_i(X_t)\circ dB_t^i
+V_0(X_t)\,dt,
\]
where $B^1,\ldots,B^d$ are independent Brownian motions.
The generator of this diffusion is
\[
L=\sum_{i=1}^d V_i^2+V_0.
\]
The relevant parabolic H\"ormander condition is that the Lie
algebra generated on $\bR\times\bR^n$ by
\[
\partial_t-V_0,\ V_1,\ldots,V_d
\]
spans the tangent space at every point. This condition implies that, for
every $t>0$, the law of $X_t$ has a smooth density with respect
to Lebesgue measure.

The Brownian motion on the Heisenberg group, discussed in
Lecture~1, provides a useful example. Let
$(B_t^{(1)},B_t^{(2)})$ be a two-dimensional Brownian motion
and set
\[
X_t=
\left(
B_t^{(1)},B_t^{(2)},
\frac12\int_0^t
\left(
B_s^{(1)}\,dB_s^{(2)}
-
B_s^{(2)}\,dB_s^{(1)}
\right)
\right).
\]
As explained in Lecture 1, the generator of $X_t$ is
\[
L=\frac12(V_1^2+V_2^2),
\qquad
V_1=\partial_x-\frac{y}{2}\partial_z,
\qquad
V_2=\partial_y+\frac{x}{2}\partial_z.
\]
Although the process is driven by only two independent Brownian
motions, the bracket
\[
[V_1,V_2]=\partial_z
\]
produces the third spatial direction. Hence H\"ormander's
condition holds, and $X_t$ has a smooth density on $\bR^3$
for every $t>0$.

Malliavin's probabilistic proof of H\"ormander's theorem begins
by viewing the solution at time $t$ as a functional of the
driving Brownian path:
\[
X_t=F_t(B).
\]
Thus,
\[
F_t:C_0([0,t],\bR^d)\longrightarrow\bR^n
\]
is a map defined on Wiener space. Malliavin calculus studies
how $F_t(B)$ changes when the Brownian path is perturbed along
directions in the Cameron--Martin space $H$. Suppose
\[
F=(F_1,\ldots,F_n)
\]
has components in a suitable Malliavin Sobolev space. Its Malliavin matrix is
\[
\gamma_F
=
\left(
\langle DF_i,DF_j\rangle_H
\right)_{i,j=1}^n,
\]
where $DF_i$ denotes the Malliavin derivative of $F_i$.
The matrix $\gamma_F$ is invertible precisely when
\[
DF:H\longrightarrow\bR^n
\]
is surjective. The one-derivative Malliavin criterion states that
\[
F_*\left(
\one_{\{\det\gamma_F>0\}}\,\mathbb{P}
\right)
\ll\mathcal{L}_n.
\]
In particular, if $\det\gamma_F>0$ almost surely, then the law
$F_*\mathbb{P}$ is absolutely continuous with respect to
Lebesgue measure. In Malliavin's proof of H\"ormander's theorem,
the bracket condition is used to establish the non-degeneracy
of the Malliavin matrix. Stronger estimates on its inverse,
combined with Malliavin's integration-by-parts formula, then
give smoothness of the density.

This criterion is the prototype for the energy image density
property. On Wiener space, the Malliavin matrix is the carré
du champ matrix of the Ornstein--Uhlenbeck Dirichlet form.
The EID property replaces the Wiener-space Malliavin matrix by
the carré du champ matrix of a general Dirichlet structure
while retaining the same image-measure conclusion.

 \subsection{EID conjecture of Bouleau and Hirsch}
The finite-dimensional analogue of Malliavin's criterion follows
from the coarea formula. Let
\[
f:\bR^m\longrightarrow\bR^n,
\qquad m\geq n,
\]
be a smooth map, and define its $n$-dimensional Jacobian by
\[
J_f(x)
:=
\sqrt{\det\bigl(Df(x)Df(x)^{\mathsf T}\bigr)}.
\]
The matrix $Df(x)Df(x)^{\mathsf T}$ is the Gram matrix of the
component gradients of $f$. Thus,
\[
J_f(x)>0
\quad\Longleftrightarrow\quad
Df(x):\bR^m\longrightarrow\bR^n
\text{ is surjective}.
\]

The coarea formula states that, for every non-negative measurable
function $g:\bR^m\to[0,\infty]$,
\[
\int_{\bR^m}g(x)J_f(x)\,dx
=
\int_{\bR^n}
\left(
\int_{f^{-1}(y)}
g(x)\,d\mathcal{H}_{m-n}(x)
\right)dy.
\]

Now let $X$ be an $\bR^m$-valued random variable with density
$\rho$ with respect to $\mathcal{L}_m$. Applying the coarea
formula with $g=\rho/J_f$ on the set $\{J_f>0\}$ shows that
\[
f_*\bigl(\one_{\{J_f>0\}}\rho\,\mathcal{L}_m\bigr)
\ll\mathcal{L}_n.
\]
Its density is given by
\[
\sigma(y)
=
\int_{f^{-1}(y)\cap\{J_f>0\}}
\frac{\rho(x)}{J_f(x)}
\,d\mathcal{H}_{m-n}(x).
\]
In particular, if $f$ is a submersion, so that $J_f>0$
everywhere, then the law of $f(X)$ is absolutely continuous
and
\[
f_*(\rho\,\mathcal{L}_m)
=
\sigma\,\mathcal{L}_n.
\]

This is the finite-dimensional prototype of Malliavin's
criterion. The matrix $DfDf^{\mathsf T}$ is replaced on Wiener
space by the Malliavin matrix, and its non-degeneracy again
forces the corresponding image measure to be absolutely
continuous. The EID property extends this principle further
by replacing the Malliavin matrix with the carré du champ
matrix of an abstract Dirichlet structure.

The preceding finite-dimensional discussion suggests an
abstraction in which the carré du champ takes the place of the
Gram matrix $DfDf^{\mathsf T}$.

\begin{definition}[Dirichlet structure]
	A \emph{Dirichlet structure}
	$(X,\mathcal{X},\mu,\mathcal{E},\mathcal{F})$ consists of a probability space
	$(X,\mathcal{X},\mu)$ and a symmetric Dirichlet form
	$(\mathcal{E},\mathcal{F})$ on $L^2(X,\mu)$ satisfying the following additional
	properties:
	\begin{enumerate}[(i)]
		\item The form is strongly local: if $f,g\in\mathcal{F}$ and
		$a\in\bR$ satisfy $(f+a)g=0$, then $\mathcal{E}(f,g)=0$.
		
		\item The constant function $\one$ belongs to $\mathcal{F}$ and
		$\mathcal{E}(\one,\one)=0$.
		
		\item The form admits a \emph{carr\'e du champ}; that is, there
		exists a positive symmetric bilinear map
		\[
		\gamma:\mathcal{F}\times\mathcal{F}\longrightarrow L^1(\mu)
		\]
		such that, for every $f,h\in\mathcal{F}\cap L^\infty(\mu)$,
		\[
		\mathcal{E}(fh,f)-\frac12\mathcal{E}(h,f^2)
		=
		\int_X h\,\gamma(f,f)\,d\mu.
		\]
	\end{enumerate}
\end{definition}

For an $\bR^n$-valued function
$f=(f_1,\ldots,f_n)\in\mathcal{F}^n$, its \emph{carr\'e du champ
	matrix} is
\[
\gamma(f)
:=
\bigl(\gamma(f_i,f_j)\bigr)_{1\leq i,j\leq n}.
\]
This is a measurable, symmetric, positive semidefinite
$n\times n$ matrix. The condition
\[
\det\gamma(f)(x)>0
\]
therefore expresses the infinitesimal independence of the
components $f_1,\ldots,f_n$ at $x$.

\begin{definition}[Energy image density property]
	A Dirichlet structure satisfies the \emph{energy image density
		property}, abbreviated EID, if, for every $n\in\bN$ and every
	$f\in\mathcal{F}^n$,
	\[
	f_*\bigl(\one_{\{\det\gamma(f)>0\}}\,\mu\bigr)
	\ll\sL_n.
	\]
\end{definition}

Thus, after restricting $\mu$ to the set on which the components
of $f$ are infinitesimally independent, its image under $f$ is
absolutely continuous with respect to Lebesgue measure.

As a finite-dimensional example, let $\mu$ be the standard
Gaussian measure on $\bR^m$ and consider the
Ornstein--Uhlenbeck Dirichlet structure
\[
\mathcal{F}=W^{1,2}(\bR^m,\mu),
\qquad
\mathcal{E}(u,v)=\int_{\bR^m}\nabla u\cdot\nabla v\,d\mu.
\]
Its carr\'e du champ is
\[
\gamma(u,v)=\nabla u\cdot\nabla v.
\]
Consequently, for a smooth map
$f=(f_1,\ldots,f_n):\bR^m\to\bR^n$,
\[
\gamma(f)=DfDf^{\mathsf T}.
\]
Hence $\det\gamma(f)(x)>0$ precisely when $Df(x)$ is
surjective. In this case, the EID property recovers the
absolute-continuity statement obtained from the coarea formula.
For the Ornstein--Uhlenbeck Dirichlet structure on Wiener space,
the matrix $\gamma(f)$ is the classical Malliavin matrix.
Motivated by Malliavin's criterion,
Bouleau and Hirsch formulated the following conjecture.

\begin{conjecture}[Bouleau--Hirsch, 1986
	\cite{bouleauhirsch}]
	Every Dirichlet structure satisfies the energy image density
	property.
\end{conjecture}

This conjecture was recently resolved affirmatively \cite{ErikssonBiqueMuruganEID}.
\begin{theorem}  \label{t:EID}
	Every Dirichlet structure satisfies the energy image density
	property.
\end{theorem}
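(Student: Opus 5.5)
We reduce the statement to a Euclidean absolute-continuity result for measures that carry many independent directions. The tool is the De~Philippis--Rindler theorem on $\mathcal{A}$-free measures, used in the same way that \cite{DMR} uses it for charts of Lipschitz differentiability spaces. Fix $f=(f_1,\ldots,f_n)\in\mathcal{F}^n$ and set $E=\{\det\gamma(f)>0\}$ and $\eta=f_*(\one_E\,\mu)$, a finite Borel measure on $\bR^n$. Suppose $\eta$ had a nonzero part singular to $\sL_n$. The plan is to build a family of normal $1$-currents on $\bR^n$ whose tangent directions span $\bR^n$ $\eta$-almost everywhere. Then the theorem of De~Philippis--Rindler (every measure on $\bR^n$ whose polar decomposition carries $n$ independent normal currents is absolutely continuous), or equivalently the Alberti--Marchese decomposability bundle argument, forces $\eta\ll\sL_n$.

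The first step constructs the currents from the Dirichlet form. For $g\in\mathcal{F}\cap L^\infty$ and a vector field, or more naturally a $1$-form, $\omega\in C^\infty_c(\bR^n;\bR^n)$, define
\[
T_g(\omega):=\sum_{i=1}^n\int_X g\,\omega_i(f)\,\gamma(f_i,\cdot)\ \text{paired appropriately, e.g. } \sum_{i,j}\int_X \omega_i(f)\,\gamma(f_i,f_j)\,h_j\,d\mu ,
\]
where $h_j\in L^\infty(\mu)$ are chosen coefficients. The candidate is $T(\omega)=\int_X\langle \omega(f),\gamma(f)h\rangle\,d\mu$. Its mass is finite because $\gamma(f_i,f_j)\in L^1$. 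To control the boundary, we take the coefficient to be a gradient in the form sense: for $u\in\mathcal{F}\cap L^\infty$ set $T_u(\omega)=\sum_i\int_X\omega_i(f)\,\gamma(f_i,u)\,d\mu$. For $\omega=d\phi$, the chain rule for the carré du champ gives $\sum_i\partial_i\phi(f)\gamma(f_i,u)=\gamma(\phi\circ f,u)$. Hence $\partial T_u(\phi)=T_u(d\phi)=\int\gamma(\phi\circ f,u)\,d\mu=\mathcal{E}(\phi\circ f,u)$. If $u$ lies in the domain of the generator $A$, this equals $-\int \phi(f)\,Au\,d\mu$, which is bounded by $\|\phi\|_\infty\|Au\|_{L^1}$. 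So $T_u$ is a normal current, and its pushforward structure is $f_*\bigl(\gamma(f)\nabla u\,\mu\bigr)$-like. Here $\gamma(f,u)=(\gamma(f_i,u))_i$, and by Cauchy--Schwarz it is absolutely continuous with respect to $f_*(|\gamma(f,u)|\mu)$. Property (ii) and probability-finiteness of $\mu$ are what make constants and total masses harmless.

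The second step, the spanning, is the main obstacle. We need to show that for $\eta$-almost every $y$ the vectors $\{\vec T_u(y)\}$, with $u$ ranging over a countable dense subset of $D(A)\cap L^\infty$, span $\bR^n$. The density of $T_u$ with respect to $\eta$ at $y$ is the conditional expectation $\mathbb{E}[\gamma(f,u)\one_E\mid f=y]$. If these vectors failed to span on a set $S$ with $\eta(S)>0$, there would be a measurable unit field $v(y)$ with $\mathbb{E}[\langle v(f),\gamma(f,u)\rangle\one_E\mid f]=0$ for all $u$. I would first approximate $u$ by $u_k=$ resolvent-smoothed versions of $\langle v,f\rangle$-type functions. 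Concretely, take $u=kR_k(\psi(f)\,w\cdot f)$ with $\psi,w$ ranging over a countable family, and let $k\to\infty$, so that $\gamma(f,u)\to\gamma(f)\,\nabla(\psi\, w\cdot y)$ in $L^1$. This yields $\mathbb{E}[\langle v(f),\gamma(f)w\rangle\one_E\mid f]\psi(y)=0$. The delicate point is that $v(f)$ is not a coefficient we can insert into $u$ freely without losing control of $\partial T$. I would handle this by a Lusin approximation of $v$ by continuous, then polynomial, fields on compact pieces of $S$, with the bound $\|\partial T\|$ being allowed to blow up while the mass converges. That requires the version of De~Philippis--Rindler needing only that the tangents span along currents with finite mass and boundary, applied separately to each fixed approximant.

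Finally, taking $w=v(y)$ approximately, we get $\mathbb{E}[\langle v,\gamma(f)v\rangle\one_E\mid f=y]=0$ on $S$. Since $\gamma(f)>0$ on $E$, this forces $\eta(S)=0$, a contradiction. Therefore the tangents span, and De~Philippis--Rindler gives $\eta\ll\sL_n$ for every $n$ and every $f$, which is the EID property.
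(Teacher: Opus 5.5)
Your route is essentially the paper's. The currents $T_u=f_*(\gamma(f,u)\,\mu)$ are normal once the coefficient $u$ lies in $D(A)$, because $\partial T_u=-f_*(Au\,\mu)$. Spanning comes from approximating the coordinates of $f$ in $\mathcal{E}_1$ by elements of $D(A)$ and using nondegeneracy of $\gamma(f)$ on $E$. De~Philippis--Rindler is then applied on each of the countably many pieces where a fixed $n$-tuple of the $T_u$ has linearly independent densities; on such a piece $\eta\ll\|T_u\|$ as well, a reduction you should state. The paper uses the coefficients $g=f_i$ for $f\in D(A)^n$ and reaches general $f$ by a similar $\mathcal{E}_1$-approximation.

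One step, however, is wrong as written. $T_u$ integrates over all of $X$, so it need not be absolutely continuous with respect to $\eta$. Its density with respect to $f_*\mu$ is $\mathbb{E}[\gamma(f,u)\mid f=y]$, not $\mathbb{E}[\gamma(f,u)\one_E\mid f=y]$. On $X\setminus E$ only the matrix $\gamma(f)$ degenerates; the vector $\gamma(f,u)$ need not vanish there, so for a general $u$ the contributions of $E$ and $X\setminus E$ can cancel. Orthogonality of $v(y)$ to all $\vec T_u(y)$ therefore gives only $\mathbb{E}[\langle v(f),\gamma(f,u)\rangle\mid f]=0$ on $S$.

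The argument survives this correction, and you should say why. After the limit you get $\mathbb{E}[\langle v(f),\gamma(f)v(f)\rangle\mid f]=0$ on $S$. Since $\gamma(f)$ is positive semidefinite on all of $X$, this forces $\langle v(f),\gamma(f)v(f)\rangle=0$ a.e.\ on $f^{-1}(S)$, hence $\eta(S)=\mu(E\cap f^{-1}(S))=0$. This positivity, $\mathbb{E}[\gamma(f)\mid f=y]\succeq\mathbb{E}[\gamma(f)\one_E\mid f=y]$, is why the coefficients must approximate functions of $f$.

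Your ``delicate point'' is illusory, so drop the Lusin/polynomial approximation of $v$. Let $R_k=(k-A)^{-1}$ and $N(y)=\mathbb{E}[\gamma(f)\mid f=y]$. Then $kR_kf_i\in D(A)$ and $kR_kf_i\to f_i$ in $\mathcal{E}_1$. The $L^1$-continuity of $u\mapsto\gamma(f,u)$ and an a.e.\ convergent subsequence of conditional expectations give $\langle v(y),N(y)e_i\rangle=0$ for $i=1,\ldots,n$ off a common null set. Linearity then yields $\langle v(y),N(y)v(y)\rangle=0$ pointwise, so $v$ never enters a coefficient.

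Also, the limit of $\gamma\bigl(f,kR_k(\psi(f)\,w\cdot f)\bigr)$ is $\gamma(f)\bigl(\psi(f)w+(w\cdot f)\nabla\psi(f)\bigr)$, so your displayed identity drops a term unless $\nabla\psi(f)=0$. The cutoff $\psi$ and the $L^\infty$ restriction are unnecessary: $w\cdot f\in\mathcal{F}$, and $A(kR_kh)=k(kR_kh-h)\in L^1(\mu)$. Using these countably many resolvent approximants as your family also removes the implicit assumption that $D(A)\cap L^\infty$ has a countable dense subset.
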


The result also extends beyond the original framework of
Dirichlet structures. Let $(\mathcal{E},\mathcal{F})$ be a strongly local,
regular Dirichlet form, let $\nu$ be a minimal energy dominant
measure, and define
\[
\gamma_\nu(f)
=
\left(
\frac{d\Gamma(f_i,f_j)}{d\nu}
\right)_{1\leq i,j\leq n}.
\]
Then, using quasicontinuous representatives of the components
of $f$,
\[
f_*\bigl(
\one_{\{\det\gamma_\nu(f)>0\}}\,\nu
\bigr)
\ll\sL_n.
\]
Versions of the EID property are also established for
$p$-Dirichlet spaces with $1<p<\infty$. These include Sobolev
spaces defined using upper gradients and self-similar
$p$-energies on fractals obtained as rescaled limits of
discrete energies.

  Several important special cases of the EID conjecture were
  known previously. The scalar case $n=1$ was established by
  Bouleau and by Bouleau--Hirsch
  \cite{Bou,bouleauhirsch}. Bouleau and Hirsch also proved the
  EID property for the Ornstein--Uhlenbeck Dirichlet structure
  on Wiener space \cite{BH86b}. In that setting, the carr\'e du
  champ matrix is the Malliavin matrix, and the resulting
  absolute-continuity statement is commonly called the
  Bouleau--Hirsch criterion.
   
   An important feature of the Bouleau--Hirsch criterion is that it
   requires only first-order Malliavin regularity: if the components of
   $f$ have first-order Malliavin regularity and
   $\det\gamma(f)>0$ almost surely, then the law of $f$ is absolutely
   continuous. By contrast, Malliavin's original integration-by-parts
   approach required additional regularity.
  The two classical approaches to image-density results may be
  summarized as follows.
  
  Malliavin's approach uses an integration-by-parts formula to
  show that suitable distributional derivatives of the image
  measure are Radon measures. In the setting of general
  Dirichlet forms, this method typically requires additional
  regularity of $f$ and of the entries of its carr\'e du champ
  matrix. It therefore does not apply to arbitrary
  $f\in\mathcal{F}^n$.
  The approach of Bouleau and Hirsch instead uses Federer's
  coarea formula. Its advantage is that it requires only
  first-order regularity and consequently yields the
  one-derivative criterion on Wiener space. Its limitation is
  that a coarea formula is not available for an arbitrary
  Dirichlet structure. Extensions of this approach therefore
  require additional geometric or structural assumptions.
  
  A further partial result was obtained by Malicet and Poly
  \cite{MP}. They proved that if $f\in\mathcal{F}^n$ satisfies
  \[
  \det\gamma(f)>0
  \qquad\text{$\mu$-almost surely},
  \]
  then its law $f_*\mu$ is a Rajchman measure; that is, its
  Fourier transform tends to zero at infinity. This conclusion
  is weaker than absolute continuity, since a Rajchman measure
  may still be singular with respect to Lebesgue measure.
  
  Thus, the central difficulty in the EID conjecture was to show,
  using only the abstract first-order structure of a Dirichlet
  form, that non-degeneracy of the carr\'e du champ matrix forces
  absolute continuity of the corresponding image measure,
  without assuming the additional regularity needed for
  integration by parts or the geometric structure needed for
  a coarea formula.

Next we discuss some ideas behind the proof of Theorem \ref{t:EID}.
The proof of the EID conjecture draws on tools developed in
connection with a converse to Rademacher's theorem. Recall that
Rademacher's theorem implies that, if $\nu$ is a Radon measure
on $\bR^n$ satisfying
$\nu\ll\sL_n$,
then every Lipschitz function $g:\bR^n\to\bR$ is differentiable
at $\nu$-almost every point. De Philippis and Rindler
\cite{DR} proved the converse: if every Lipschitz function
$g:\bR^n\to\bR$ is differentiable $\nu$-almost everywhere,
then
\[
\nu\ll\sL_n.
\]
Our proof of the EID conjecture uses a criterion for absolute
continuity arising from the same paper.

We first recall briefly the language of currents. A
$k$-dimensional current $T$ in $\bR^n$ is a continuous linear
functional on the space $\mathcal{D}^k(\bR^n)$ of smooth,
compactly supported $k$-forms. Its boundary is the
$(k-1)$-current defined by
\[
\partial T(\omega):=T(d\omega),
\qquad
\omega\in\mathcal{D}^{k-1}(\bR^n),
\]
and its mass is
\[
\mathbf{M}(T)
:=
\sup\left\{
T(\omega):
\omega\in\mathcal{D}^k(\bR^n),\
|\omega|\leq1
\right\},
\]
where $|\omega|$ denotes the pointwise comass norm. For example,
if $T_\Omega$ is the current of integration over a smoothly
bounded, oriented domain $\Omega\subset\bR^n$, then
$\partial T_\Omega$ is the current of integration over
$\partial\Omega$, with the induced orientation, and
\[
\mathbf{M}(T_\Omega)=\sL_n(\Omega),
\qquad
\mathbf{M}(\partial T_\Omega)
=\sH_{n-1}(\partial\Omega).
\]

A current $T$ is called \emph{normal} if both $T$ and
$\partial T$ have finite mass. We shall only use
one-dimensional currents. Every $1$-current of finite mass
can be represented as
\[
T=\vec T\,\|\!T\!\|,
\]
where $\|\!T\!\|$ is a finite positive measure and
$\vec T:\bR^n\to\bR^n$ is measurable with
$|\vec T|=1$ for $\|\!T\!\|$-almost every point. More
explicitly,
\[
T(\omega)
=
\int_{\bR^n}
\langle\omega(y),\vec T(y)\rangle\,d\|\!T\!\|(y).
\]

The following consequence of the structure theorem of
De Philippis and Rindler is the main geometric ingredient.

\begin{theorem}[De Philippis--Rindler \cite{DR}]
	Let
	\[
	T_i=\vec T_i\,\|\!T_i\!\|,
	\qquad i=1,\ldots,n,
	\]
	be one-dimensional normal currents in $\bR^n$, and let $\nu$
	be a positive Radon measure such that
	\[
	\nu\ll\|\!T_i\!\|
	\qquad\text{for every $i=1,\ldots,n$.}
	\]
	If
	\[
	\operatorname{span}
	\{\vec T_1(y),\ldots,\vec T_n(y)\}
	=\bR^n
	\qquad\text{for $\nu$-almost every $y$,}
	\]
	then
	\[
	\nu\ll\sL_n.
	\]
\end{theorem}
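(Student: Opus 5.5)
The plan is to deduce the statement from the general structure theorem of De Philippis and Rindler for $\mathcal{A}$-free measures. That theorem says: if $\mu$ is an $\bR^N$-valued Radon measure with $\mathcal{A}\mu$ a finite measure, for a first-order constant-coefficient operator $\mathcal{A}$, then the polar $\frac{d\mu}{d|\mu|}(y)$ lies in the wave cone $\Lambda_{\mathcal{A}}$ for $|\mu|^s$-almost every $y$. Here $|\mu|^s$ is the part of $|\mu|$ singular with respect to $\sL_n$.

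\textbf{Step 1: the matrix-valued measure.} I would package the currents into one $\bR^{n\times n}$-valued measure $M$ whose $i$-th row is the vector measure $\vec T_i\,\|\!T_i\!\|$. Identifying a $1$-current with a vector field, the boundary is minus the distributional divergence: $\partial T_i(\phi)=T_i(d\phi)$. So the row-wise divergence operator $\mathcal{A}M=(\operatorname{div} M_1,\ldots,\operatorname{div} M_n)$ satisfies $\mathcal{A}M=-(\partial T_1,\ldots,\partial T_n)$. Normality says this right-hand side is a finite measure. The symbol of $\mathcal{A}$ at a frequency $\xi$ is $A\mapsto A\xi$, so the wave cone is
\[
\Lambda_{\mathcal{A}}=\{A\in\bR^{n\times n}: A\xi=0\ \text{for some}\ \xi\neq0\},
\]
the set of singular matrices.

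\textbf{Step 2: the non-homogeneous term.} This is the step I expect to be the main obstacle, because the structure theorem is most naturally stated for $\mathcal{A}M=0$. I would handle it by the blow-up argument underlying the De Philippis--Rindler proof. At a $|M|^s$-typical point, rescale at scale $r$ and normalise by $|M|(B(y,r))$. The boundary term $\partial T_i$ then acquires a factor $r\,|\partial T_i|(B(y,r))/|M|(B(y,r))$. Because $|M|^s$-almost every point satisfies $|M|(B(y,r))\gg r^n$, while $|\partial T_i|(B(y,r))$ is at most a finite measure, I would show this factor tends to $0$ along a subsequence at $|M|^s$-almost every point. Tangent measures are therefore genuinely $\mathcal{A}$-free, and the homogeneous argument applies unchanged. (Alternatively, I would cite the version of the theorem that allows a measure-valued right-hand side.) The conclusion is that $\frac{dM}{d|M|}(y)$ is a singular matrix for $|M|^s$-almost every $y$.

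\textbf{Step 3: conclusion.} Put $\lambda=|M|$. Since $\nu\ll\|\!T_i\!\|\le\lambda$ for each $i$, we have $\nu\ll\lambda$. Moreover the densities $a_i=d\|\!T_i\!\|/d\lambda$ are positive $\nu$-almost everywhere. Indeed, on $\{a_i=0\}$ we have $\|\!T_i\!\|$-measure zero, hence $\nu$-measure zero. At $\nu$-almost every $y$ the polar matrix has rows $a_i(y)\vec T_i(y)$ up to a positive normalisation. By the spanning hypothesis it is therefore invertible.

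Now choose a Borel set $S$ with $\sL_n(S)=0$ that carries $\lambda^s$. On $S$ the polar of $M$ agrees $\lambda$-almost everywhere with the polar of its singular part, so by Step~2 it is singular $\lambda$-almost everywhere on $S$. Since $\nu\ll\lambda$ and the polar is invertible $\nu$-almost everywhere, it follows that $\nu(S)=0$. Hence $\nu$ is carried by the complement of $S$, where $\lambda$ coincides with its absolutely continuous part $\lambda^a\ll\sL_n$. Together with $\nu\ll\lambda$ this gives $\nu\ll\lambda^a\ll\sL_n$.
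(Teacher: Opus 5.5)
Your derivation is correct and follows the same route as the paper, which simply cites the statement as a consequence of the De Philippis--Rindler structure theorem. Your version is the standard way that deduction goes: the matrix-valued measure with row-wise divergence, whose wave cone is the set of singular matrices, followed by your Step 3 bookkeeping, which is sound. One fix in Step 2: the vanishing of $r\,|\partial T_i|(B(y,r))/|M|(B(y,r))$ does not follow from $|M|(B(y,r))\gg r^n$ (for $n\ge 2$ that bound is too weak). It follows from the Besicovitch differentiation theorem, which gives $\limsup_{r\downarrow 0}|\partial T_i|(B(y,r))/|M|(B(y,r))<\infty$ for $|M|$-a.e.\ $y$; alternatively, cite the version of the theorem with $\mathcal{A}M$ a finite measure, as you suggest.
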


We now explain how the Dirichlet structure produces the normal
currents required by this theorem. Let
$A:D(A)\to L^2(X,\mu)$ denote the generator of the Dirichlet
form, with our sign convention
\[
\mathcal{E}(g,h)=-\langle A(g),h\rangle_{L^2(\mu)},
\qquad g\in D(A),\quad h\in\mathcal{F}.
\]
Equivalently,
\[
D(A)
=
\left\{
g\in\mathcal{F}:
|\mathcal{E}(g,h)|\leq C\|h\|_{L^2(\mu)}
\text{ for some $C>0$ and every $h\in\mathcal{F}$}
\right\}.
\]
The energy measures of the Dirichlet structure are
\[
\Gamma(u,v):=\gamma(u,v)\,\mu.
\]

Given $f=(f_1,\ldots,f_n)\in\mathcal{F}^n$ and $g\in\mathcal{F}$, define a
one-dimensional current $T_{f,g}$ in $\bR^n$ by
\[
T_{f,g}\left(\sum_{j=1}^n h_j\,dx_j\right)
:=
\sum_{j=1}^n
\int_X h_j(f(x))\,d\Gamma(f_j,g)(x).
\]
Equivalently, $T_{f,g}$ is the $\bR^n$-valued measure
\[
T_{f,g}
=
f_*\bigl(
(\Gamma(f_j,g))_{j=1}^n
\bigr).
\]
The Cauchy--Schwarz inequality for energy measures shows that
this current has finite mass.

The crucial observation is that $T_{f,g}$ is normal whenever
$g\in D(A)$. Indeed, if
$\phi\in C_c^\infty(\bR^n)$ and
$\phi_0=\phi-\phi(0)$, then the chain rule gives
\begin{align*}
	\partial T_{f,g}(\phi)
	&=
	T_{f,g}(d\phi)\\
	&=
	\sum_{j=1}^n
	\int_X
	\frac{\partial\phi}{\partial x_j}(f(x))
	\,d\Gamma(f_j,g)(x)\\
	&=
	\mathcal{E}(\phi_0\circ f,g)\\
	&=
	-\int_X A(g)\,\phi_0(f)\,d\mu.
\end{align*}
Moreover,
\[
\int_X A(g)\,d\mu
=
-\mathcal{E}(g,\one)
=0.
\]
It follows that
\[
\partial T_{f,g}(\phi)
=
-\int_X A(g)\,\phi(f)\,d\mu,
\]
or equivalently,
\[
\partial T_{f,g}
=
-\,f_*\bigl(A(g)\,\mu\bigr).
\]
Thus $\partial T_{f,g}$ has finite mass and $T_{f,g}$ is a
normal current.

To apply the De Philippis--Rindler theorem, fix
$f=(f_1,\ldots,f_n)\in\mathcal{F}^n$ and set
\[
I_f:=\{x\in X:\det\gamma(f)(x)>0\},
\qquad
\nu_f:=f_*(\one_{I_f}\mu).
\]
For each $i=1,\ldots,n$, consider the current
\[
T_i:=T_{f,f_i}.
\]
The invertibility of the carr\'e du champ matrix implies
\[
\nu_f\ll\|\!T_i\!\|,
\qquad i=1,\ldots,n,
\]
and
\[
\operatorname{span}
\{\vec T_1(y),\ldots,\vec T_n(y)\}
=\bR^n
\qquad\text{for $\nu_f$-almost every $y$.}
\]
Thus, if $f\in D(A)^n$, the currents $T_1,\ldots,T_n$ are
normal and the De Philippis--Rindler theorem yields
\[
\nu_f
=
f_*(\one_{\{\det\gamma(f)>0\}}\mu)
\ll\sL_n.
\]
This is precisely the EID property for $f$.

For a general $f\in\mathcal{F}^n$, the currents $T_{f,f_i}$ need not
be normal because the components $f_i$ need not belong to
$D(A)$. The full proof uses an additional approximation
argument based on the density of $D(A)$ in the Hilbert space
$(\mathcal{F},\mathcal{E}_1)$, together with continuity properties of energy
measures. This extends the conclusion from $D(A)^n$ to all of
$\mathcal{F}^n$.

The preceding construction is intrinsically quadratic, since it
uses bilinear energy measures and the Hilbert-space generator,
and therefore does not extend directly to nonlinear $p$-energies.
In the nonlinear setting, the De Philippis--Rindler theorem is
instead used indirectly to detect a missing infinitesimal
direction for a singular image measure; suitable Lipschitz
approximations in this direction, together with the chain rule
and weak lower semicontinuity of the energy measures, then
contradict the assumed independence of the coordinate functions \cite{ErikssonBiqueMuruganEID}. 
 
\subsection{Extension to regular Dirichlet forms and applications}
The EID property also extends to all strongly local, regular
Dirichlet forms. There is, however, an important difference
from the setting of Dirichlet structures. If $(\mathcal{E},\mathcal{F})$ is a
strongly local, regular Dirichlet form on $L^2(X,m)$, its energy
measures need not be absolutely continuous with respect to
$m$; indeed, on many fractals they are singular with respect
to $m$. Consequently, a carr\'e du champ density relative to
the reference measure need not exist.

To overcome this issue, let $\nu$ be a minimal energy dominant
measure and, for
$f=(f_1,\ldots,f_n)\in\mathcal{F}^n$, define
\[
\gamma_\nu(f)
:=
\left(
\frac{d\Gamma(f_i,f_j)}{d\nu}
\right)_{1\leq i,j\leq n}.
\]
Since functions in $\mathcal{F}$ are initially defined only up to
$m$-null sets, while $\nu$ need not be absolutely continuous
with respect to $m$, we use quasicontinuous representatives of
the components of $f$. Such representatives are uniquely
defined outside a set of zero capacity, and energy dominant
measures do not charge such sets. The resulting image measure
is therefore well defined.

Any two minimal energy dominant measures are mutually
absolutely continuous. Moreover, changing the minimal energy
dominant measure multiplies the matrix $\gamma_\nu(f)$ by a
strictly positive scalar function. Thus the set on which
$\det\gamma_\nu(f)>0$, as well as the measure class of the
corresponding image measure, is independent of the choice of
$\nu$. A modification of the proof of Theorem~\ref{t:EID} gives the
following extension \cite{ErikssonBiqueMuruganEID}.
\begin{theorem}
	Let $(\mathcal{E},\mathcal{F})$ be a strongly local, regular Dirichlet form
	on $L^2(X,m)$, and let $\nu$ be a minimal energy dominant
	measure. For every $n\in\bN$ and
	$f=(f_1,\ldots,f_n)\in\mathcal{F}^n$, using quasicontinuous
	representatives of the components of $f$, we have
	\[
	f_*\bigl(
	\one_{\{\det\gamma_\nu(f)>0\}}\,\nu
	\bigr)
	\ll\sL_n.
	\]
\end{theorem}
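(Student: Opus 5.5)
We adapt the current-based argument sketched for Theorem~\ref{t:EID} to the regular setting. Throughout, $\tilde f=(\tilde f_1,\ldots,\tilde f_n)$ denotes quasicontinuous representatives. First we reduce to a local, finite situation. Since $\nu$ is $\sigma$-finite and the set $\{\det\gamma_\nu(f)>0\}$ does not depend on the choice of minimal energy dominant measure, we may replace $\nu$ by an equivalent finite measure. For example, take $\nu=\sum_k 2^{-k}\Gamma(g_k,g_k)/(1+\mathcal{E}(g_k,g_k))$ for a suitable countable family $\{g_k\}$, which is possible by the construction in \cite{Nak,hinoenergymeasures}. By truncation and the chain rule for energy measures, we may also assume each $f_i$ is bounded. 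Finally, by multiplying with cutoff functions equal to one on large compact sets and using strong locality, we may assume each $f_i$ has compact support.

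Next we build the currents. For $g\in\mathcal{F}$, set $T_{f,g}:=\tilde f_*\bigl((\Gamma(f_j,g))_{j=1}^n\bigr)$. This has finite mass by the Cauchy--Schwarz inequality for energy measures. Its boundary is computed as in the text via the chain rule, giving $\partial T_{f,g}(\phi)=\mathcal{E}(\phi_0\circ f,g)$. Here the reference measure $m$ appears only through the generator, and $m$ may be infinite. So we replace the generator by the $1$-resolvent and use functions $g=G_1h$ with $h\in L^2(X,m)\cap L^1(X,m)$. Then
\[
\mathcal{E}(\phi_0\circ f,g)=\int_X(h-g)\,\phi_0(f)\,dm .
\]
Because $f$ has compact support, $\phi_0(f)$ vanishes off $\supp f$ and is bounded. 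Hence $\partial T_{f,g}$ is a finite measure, namely the pushforward of the finite signed measure $(h-g)\one_{\supp f}\,m$ corrected by a multiple of $\delta_0$. So $T_{f,g}$ is normal for a dense class of $g$: the functions $G_1h$ with $h\in L^1\cap L^2$ are $\mathcal{E}_1$-dense in $\mathcal{F}$.

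Then we apply De Philippis--Rindler. We choose $g^{(k)}_i=G_1h^{(k)}_i$ converging to $f_i$ in $\mathcal{E}_1$. By the continuity $\|\Gamma(u,v)-\Gamma(u,w)\|_{TV}\le \mathcal{E}(u,u)^{1/2}\mathcal{E}(v-w,v-w)^{1/2}$, after passing to a subsequence the densities $d\Gamma(f_j,g^{(k)}_i)/d\nu$ converge $\nu$-a.e. to $\gamma_\nu(f)_{ji}$. Thus the $n\times n$ matrices $M_k:=(d\Gamma(f_j,g^{(k)}_i)/d\nu)_{j,i}$ are invertible for large $k$ at $\nu$-a.e.\ point of $I:=\{\det\gamma_\nu(f)>0\}$. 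Fix a set $I_K\subset I$ of positive measure on which $M_k$ is invertible for all $k\ge K$. The main obstacle is here, as in the Dirichlet structure case. The pushforward under $\tilde f$ mixes fibres, so invertibility of $M_k(x)$ pointwise on $X$ does not directly give that the polar vectors $\vec T_i(y)$ of the pushed-forward currents span $\bR^n$ at $y$. Cancellation over the fibre $\tilde f^{-1}(y)$ could occur. To handle this, we first localize: we decompose $I_K$ into countably many pieces $E$ on which $M_K$ is close to a fixed invertible matrix $M^E$. We then restrict the currents to $E$ in a way that preserves normality. This requires choosing $h$ adapted to $E$, or else using the nonlinear argument indicated in the text, in which a singular part of $\tilde f_*(\one_E\nu)$ yields, via De Philippis--Rindler, a missing direction $e$. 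We then construct Lipschitz functions $\psi_k$ on $\bR^n$ with $\Gamma(\psi_k\circ f)$ small on $E$ but $\langle d\psi_k,e\rangle$ large. Combined with the chain rule and lower semicontinuity of energy measures, this contradicts the uniform positivity of $e^{\mathsf T}M^E e$ on $E$.

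Granting this step, for each piece $E$ the measure $\tilde f_*(\one_E\nu)$ is dominated by $\|T_i\|$ for each $i$, and it has spanning polar directions. De Philippis--Rindler then gives $\tilde f_*(\one_E\nu)\ll\sL_n$. Summing over the countably many pieces and over $K$ yields $\tilde f_*(\one_{\{\det\gamma_\nu(f)>0\}}\nu)\ll\sL_n$. The statement is well defined because $\nu$ charges no set of zero capacity.
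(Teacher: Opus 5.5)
Your reductions (finite $\nu$, bounded and compactly supported $f_i$) and your use of the $1$-resolvent are sound. For $g=G_1h$ one gets $\partial T_{f,g}=\tilde f_*\bigl((h-g)\one_{\supp f}\,m\bigr)-c\,\delta_0$. So keeping $f$ fixed and approximating only the second slot does produce normal currents.

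The gap is the step you explicitly ``grant''. Neither remedy you offer is an argument:
\begin{itemize}
\item Restricting $T_{f,g}$ to the contribution of a Borel set $E\subset X$ destroys the identity $\partial T(\phi)=\mathcal{E}(\phi_0\circ f,g)$. The restricted current has boundary $\phi\mapsto\int_E\sum_j\partial_j\phi(\tilde f)\,d\Gamma(f_j,g)$, and there is no reason for this to have finite mass.
\item ``Choosing $h$ adapted to $E$'' is never specified.
\item The nonlinear argument is a different and considerably harder proof, which you only name.
\end{itemize}
As written, the hypotheses of the De Philippis--Rindler theorem are never verified. Pointwise invertibility of $M_k$ on $I$ is, as you noticed, beside the point.

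The missing idea is that no localization is needed: work on the image side and use positive semidefiniteness. Let $\lambda:=\tilde f_*\nu$. Let $W_k$ and $W$ be the matrix densities with respect to $\lambda$ of $\tilde f_*\Gamma(f_j,g_i^{(k)})$ and $\tilde f_*\Gamma(f_j,f_i)$, so that $T_{f,g_i^{(k)}}=W_ke_i\,\lambda$. Push-forward does not increase total variation, so
\[
\sum_{i,j}\int|W_k^{ji}-W^{ji}|\,d\lambda\leq\sum_{i,j}\mathcal{E}(f_j,f_j)^{1/2}\,\mathcal{E}\bigl(g_i^{(k)}-f_i,g_i^{(k)}-f_i\bigr)^{1/2}\longrightarrow0.
\]
Disintegrate $\nu=\int\nu_y\,d\lambda(y)$ along $\tilde f$. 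Then the limit $W(y)=\int\gamma_\nu(f)\,d\nu_y$ is an average of positive semidefinite matrices. It is therefore positive definite whenever $\nu_y(I)>0$, that is, for $\nu_f$-a.e.\ $y$, where $\nu_f:=\tilde f_*(\one_I\nu)=\nu_y(I)\,\lambda$. So the fibre cancellation you worry about cannot occur in the limit.

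To finish, pass to a subsequence with $W_k\to W$ $\lambda$-a.e. Then $\nu_f$-a.e.\ $y$ lies in some $Y_k:=\{\det W_k\neq0\}$. On $Y_k$ we have $\nu_f\ll|W_ke_i|\,\lambda=\|\!T_{f,g_i^{(k)}}\!\|$, and the vectors $\vec T_i=W_ke_i/|W_ke_i|$ span $\bR^n$. De Philippis--Rindler then gives $\nu_f|_{Y_k}\ll\sL_n$, and taking the union over $k$ completes the proof. This is the ``approximation plus continuity of energy measures'' step the paper alludes to.
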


This statement is reminiscent of the theorem of
De Philippis, Marchese, and Rindler discussed in Lecture~2.
For an $n$-dimensional differentiability chart $(U,\phi)$,
their result gives
\[
\phi_*(\one_U\,m)\ll\sL_n.
\]
Since $\phi$ is Lipschitz and the image has positive
$n$-dimensional Lebesgue measure, one obtains
\[
n\leq\dim_H(X,d).
\]
Taking the supremum over the chart dimensions yields the
bound of analytic dimension by Hausdorff dimension.

The same argument cannot be applied directly to martingale
dimension in the sub-Gaussian setting, because the functions
in the domain of the Dirichlet form need not be Lipschitz.
In fact, recent work of Anttila, Eriksson-Bique,
and Shimizu provides examples satisfying sub-Gaussian heat
kernel estimates with walk dimension $\beta>2$ for which
$\mathcal{F}$ contains no non-constant Lipschitz functions \cite[Theorem 1.9]{AES}.

Heat kernel regularity nevertheless supplies a sufficiently
large family of H\"older continuous functions. A consequence
of the results of Barlow, Grigor'yan, and Kumagai \cite{BGK}
is the following.

\begin{proposition}
	Let $(\mathcal{E},\mathcal{F})$ be a strongly local, regular Dirichlet form
	on a metric measure space $(X,d,m)$, where $m$ is doubling.
	Suppose that its heat kernel satisfies two-sided sub-Gaussian
	estimates. Then there exists $\theta\in(0,1]$, depending only
	on the constants in the heat kernel estimates, such that
	$\mathcal{F}$ contains an $\mathcal{E}_1$-dense collection of functions
	admitting $\theta$-H\"older continuous versions.
\end{proposition}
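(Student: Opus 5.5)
The dense family will consist of functions of the form $P_t g$ with $t>0$ and $g\in L^2(X,m)$, or a suitable subclass such as the functions $P_t g$ with $g\in L^2\cap L^\infty$ of bounded support. The plan has three steps: show that such functions have H\"older versions, show that they are $\mathcal{E}_1$-dense in $\mathcal{F}$, and then check that the H\"older exponent depends only on the constants in the heat kernel estimates.

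\emph{H\"older regularity of the heat kernel.} The two-sided sub-Gaussian estimates, together with doubling, imply a parabolic Harnack inequality; this is the content of the equivalence theorems of Barlow, Grigor'yan and Kumagai \cite{BGK}. A Harnack chain argument then gives H\"older continuity of caloric functions. Concretely, I would aim to show that there exist $\theta\in(0,1]$ and $C$ such that
\[
|p_t(x,y)-p_t(x',y)|\leq \frac{C}{m(B(y,t^{1/\beta}))}\left(\frac{d(x,x')}{t^{1/\beta}}\right)^{\theta}
\]
holds for the continuous version of the heat kernel, where $\theta$ depends only on the Harnack constant and hence only on the heat kernel data. Given this bound, I would estimate $|P_tg(x)-P_tg(x')|$ by Cauchy--Schwarz:
\[
|P_tg(x)-P_tg(x')|\leq \|g\|_{L^2}\,\|p_t(x,\cdot)-p_t(x',\cdot)\|_{L^2}.
\]
To control the right-hand side, interpolate the pointwise H\"older bound against the $L^1$ bound $\|p_t(x,\cdot)\|_{L^1}\le 1$, and use the on-diagonal upper estimate together with $p_{2t}(x,x)=\|p_t(x,\cdot)\|_2^2$. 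This yields a local $\theta'$-H\"older bound with constant depending on $t$ and on a lower bound for the volume of balls near $x$. Alternatively, one can use the fact from \cite{BGK} that functions in the domain of the generator that are bounded by $L^\infty$ data are H\"older continuous. Either route makes every $P_tg$ $\theta$-H\"older continuous, at least locally; local H\"older continuity on compact sets suffices if the statement is read that way, and otherwise I would restrict to $g$ with bounded support and use the doubling property to obtain uniform constants.

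\emph{Density and the main obstacle.} Density is soft. By spectral theory, $P_tf\to f$ in $\mathcal{E}_1$ as $t\downarrow0$ for every $f\in\mathcal{F}$, and $P_t$ maps $L^2$ into $D(A)\subset\mathcal{F}$. So $\{P_tg\}$ is $\mathcal{E}_1$-dense, and restricting to $g\in L^2\cap L^\infty$ with bounded support preserves density because $P_t$ is $L^2$-continuous and $\mathcal{F}$-continuous on the range. The main obstacle is the first step: deriving the spatial H\"older estimate for $p_t$ with an exponent $\theta$ that is uniform and depends only on the heat kernel constants. I would take the parabolic Harnack inequality from \cite{BGK} as the input and run the standard Moser oscillation iteration over parabolic cylinders of size $r\times r^\beta$, where each step reduces the oscillation by a fixed factor $1-\delta$. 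This gives $\theta=\log(1/(1-\delta))/\log 2$, possibly after adjusting the ratio of cylinder scales, and so an exponent depending only on the data.
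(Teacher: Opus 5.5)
Your proposal is correct and is essentially the paper's argument. The paper also takes functions $P_th$, there with $h\in\mathcal{F}\cap L^\infty(X,m)$. It gets their $\theta$-H\"older versions from the heat kernel regularity of \cite{BGK} (parabolic Harnack inequality and oscillation decay), and gets $\mathcal{E}_1$-density from $P_th\to h$ as $t\downarrow0$ together with density of bounded functions. Your density argument, via $P_tf\to f$ and boundedness of $P_t\colon L^2\to(\mathcal{F},\mathcal{E}_1)$ for fixed $t$, is an equally valid variant.

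One simplification: for bounded data, the oscillation estimate for bounded caloric functions directly gives the global bound $|P_th(x)-P_th(x')|\leq C\|h\|_{\infty}\,(d(x,x')/t^{1/\beta})^{\theta}$. So your Cauchy--Schwarz/interpolation step is unnecessary; it halves the exponent and yields only local constants.
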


More concretely, heat kernel regularity implies that functions
of the form
\[
P_t h,
\qquad
t>0,\quad h\in\mathcal{F}\cap L^\infty(X,m),
\]
admit $\theta$-H\"older continuous versions. Such functions
form an $\mathcal{E}_1$-dense subset of $\mathcal{F}$.

We can now combine this density with the EID property. Suppose
that the martingale dimension is at least $n$. The density of
the H\"older functions and the continuity of energy measures
under $\mathcal{E}_1$-convergence allow us to choose a
$\theta$-H\"older map
\[
f=(f_1,\ldots,f_n):X\longrightarrow\bR^n
\]
whose modified carr\'e du champ matrix has rank $n$ on a set
of positive $\nu$-measure. The EID property then implies that
\[
f_*\bigl(
\one_{\{\det\gamma_\nu(f)>0\}}\,\nu
\bigr)
\]
is a nonzero measure absolutely continuous with respect to
$\sL_n$. Consequently, $f(X)$ has positive
$n$-dimensional Lebesgue measure and hence
\[
\dim_H f(X)=n.
\]

A $\theta$-H\"older map can increase Hausdorff dimension by
at most a factor of $1/\theta$. Therefore,
\[
n=\dim_H f(X)
\leq\frac{\dim_H(X,d)}{\theta}.
\]
Taking the supremum over all such $n$ gives
\[
d_m\leq\frac{\dim_H(X,d)}{\theta}<\infty.
\]
This proves Theorem \ref{t:finite-mtg-dim}.
In the Gaussian case $\beta=2$, the results of
Koskela and Zhou \cite{KZ} provide an $\mathcal{E}_1$-dense collection
of Lipschitz functions. Thus one may take $\theta=1$ in the
dimension argument, recovering
\[
d_m\leq\dim_H(X,d).
\]
 
\section{Non-existence of measurable differentiable structures}
\label{sec:nonexistence}
 The goal of this section is to prove a new result showing that a large
 class of metric measure spaces arising in the study of diffusions on
 fractals are not Lipschitz differentiability spaces. In particular, the
 result applies to spaces carrying strongly local, regular Dirichlet
 forms that satisfy two-sided sub-Gaussian heat kernel estimates with
 walk dimension strictly greater than two.

To state this result, we recall two analytic conditions
adapted to a general space--time scaling function. A
\emph{scale function} is a continuous and strictly increasing function
\[
\Psi\colon[0,\infty)\longrightarrow[0,\infty),
\qquad \Psi(0)=0,
\]
for which there exist constants \(C_\Psi\geq1\) and
\(1<\beta_1\leq\beta_2<\infty\) such that
\[
C_\Psi^{-1}\left(\frac{R}{r}\right)^{\beta_1}
\leq
\frac{\Psi(R)}{\Psi(r)}
\leq
C_\Psi\left(\frac{R}{r}\right)^{\beta_2}
\]
whenever \(0<r\leq R<\diam(X,d)\). The basic example is
\[
\Psi(r)=r^\beta.
\]
Here \(\Psi(r)\) represents the characteristic time scale associated
with the spatial scale \(r\). The Gaussian case corresponds to
\(\beta=2\), while \(\beta>2\) is the strictly sub-Gaussian regime.

Let \((\mathcal{E},\mathcal{F})\) be a strongly local, regular Dirichlet form on
\(L^2(X,m)\), and let \(\Gamma\) denote its energy measure. We say that
the Dirichlet form satisfies the \(\Psi\)-Poincar\'e inequality,
denoted by \({\rm PI}(\Psi)\), if there exist constants
\(C_{\rm PI}>0\) and \(A\geq1\) such that
\[
\int_{B(x,r)}
\left|f-f_{B(x,r)}\right|^2\,dm
\leq
C_{\rm PI}\Psi(r)\,
\Gamma(f,f)\bigl(B(x,A r)\bigr)
\]
for every \(f\in\mathcal{F}\), \(x\in X\), and \(r>0\), where
\[
f_{B(x,r)}
=
\frac{1}{m(B(x,r))}
\int_{B(x,r)}f\,dm.
\]
Thus \({\rm PI}(\Psi)\) controls the oscillation of a function on a
ball by its energy on a slightly larger ball, with the factor
\(\Psi(r)\) accounting for the relevant space--time scaling.

We next recall the corresponding capacity upper bound. If \(K\subset
U\), where \(U\) is open, define the relative capacity by
\[
\operatorname{Cap}_{U}(K)
:=
\inf\left\{
\mathcal{E}(u,u):
u\in\mathcal{F}_U,\ 
\widetilde u\geq1
\text{ quasi-everywhere on }K
\right\},
\]
where \(\widetilde u\) denotes a quasicontinuous representative and
\[
\mathcal{F}_U
=
\left\{
u\in\mathcal{F}:
\widetilde u=0
\text{ quasi-everywhere on }X\setminus U
\right\}.
\]
We say that \({\rm Cap}(\Psi)_{\leq}\) holds if there exist \(A>1\)
and \(C_{\rm Cap}>0\) such that
\[
\operatorname{Cap}_{B(x,Ar)}\bigl(B(x,r)\bigr)
\leq
C_{\rm Cap}\frac{m(B(x,r))}{\Psi(r)}
\]
for every \(x\in X\) and every relevant radius \(r>0\). Equivalently,
up to changing the constants, one can require the existence of a
cutoff function \(u\in\mathcal{F}\) satisfying
\[
0\leq u\leq1,\qquad
u=1\ \text{on }B(x,r),\qquad
u=0\ \text{outside }B(x,Ar),
\]
and
\[
\mathcal{E}(u,u)
\leq
C_{\rm Cap}\frac{m(B(x,r))}{\Psi(r)}.
\]
The particular value of the enlargement factor \(A>1\) is immaterial:
if the capacity upper bound holds for one such \(A\), then a standard
covering argument, together with volume doubling and the scaling
properties of \(\Psi\), shows that it holds for every \(A>1\), after
suitably adjusting \(C_{\rm Cap}\).

These conditions arise naturally from heat kernel estimates. In the
setting of strongly local regular Dirichlet forms, two-sided heat
kernel estimates with scale function \(\Psi\) imply
\({\rm PI}(\Psi)\) and the capacity upper bound
\({\rm Cap}(\Psi)_{\leq}\); see \cite{BGK}. Under the standing
assumptions, the converse was a long-standing open problem and was
recently established by the first-named author \cite{EB-res}. The
sub-Gaussian estimates with walk dimension \(\beta\) considered
earlier correspond to the scale function
\[
\Psi(r)=r^\beta.
\]
For this choice,
\[
\frac{\Psi(\epsilon r)}
{\Psi(r)\epsilon^2}
=
\epsilon^{\beta-2}.
\]
Consequently,
\[
\lim_{\epsilon\to0}\limsup_{r\downarrow0}
\frac{\Psi(\epsilon r)}
{\Psi(r)\epsilon^2}
=0
\]
whenever \(\beta>2\), whereas the expression is identically equal to
\(1\) in the Gaussian case \(\beta=2\). The following theorem
therefore gives a sharp contrast with the Gaussian situation
discussed in Lecture~2.

\begin{theorem}\label{t:diff-space}
	Let \((\mathcal{E},\mathcal{F})\) be a strongly local, regular Dirichlet form
	on \(L^2(X,m)\), where \((X,d,m)\) is a metric measure space
	and \(m\) is doubling. Suppose that \((\mathcal{E},\mathcal{F})\) satisfies
	\({\rm PI}(\Psi)\) and \({\rm Cap}(\Psi)_{\leq}\), and that
	\[
	\lim_{\epsilon\to0}\limsup_{r\downarrow0}
	\frac{\Psi(\epsilon r)}
	{\Psi(r)\epsilon^2}
	=0.
	\]
	Then \((X,d,m)\) is not a Lipschitz differentiability space.
\end{theorem}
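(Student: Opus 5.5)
The plan is to argue by contradiction. Suppose $(X,d,m)$ carries a Lipschitz differentiability chart $(U,\varphi)$ of dimension $n\geq1$. The idea is to compare two scalings at small scales: the linear scaling forced on Lipschitz functions by differentiability, and the $\Psi$-scaling forced on energy by ${\rm PI}(\Psi)$ and ${\rm Cap}(\Psi)_{\leq}$. The hypothesis $\Psi(\epsilon r)/(\epsilon^2\Psi(r))\to0$ says exactly that these two scalings are incompatible.

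\textbf{Step 1: a good Lipschitz function.} Take $f=\varphi_1$, the first chart coordinate. It is differentiable at $m$-a.e.\ $x\in U$ with $df_x=e_1^*\neq0$, since uniqueness of differentials forces the chart coordinates to be infinitesimally independent. I would then fix a density point $x$ of $U$ at which, in addition, the following quantitative nondegeneracy holds: there is $c>0$ such that for all small $r$,
\[
\fint_{B(x,r)}\bigl|f-f_{B(x,r)}\bigr|^2\,dm\geq c\,r^2 .
\]
Obtaining this is itself part of the work. If it failed at a.e.\ point, then $f$ would be ``flat'' at all scales, and a standard argument should contradict uniqueness of the differential, using doubling to pass from pointwise to averaged statements. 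Along with this, I would use $f(y)=f(x)+df_x(\varphi(y)-\varphi(x))+o(r)$ on $B(x,r)$, which is the rigidity of a near-affine function at all scales below $r$.

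\textbf{Step 2: an energy-carrying approximant.} Since $f$ need not lie in $\mathcal{F}$, I would build one. Cover $B(x,Ar)$ by a bounded-overlap family of balls $B_i=B(x_i,\epsilon r)$. Using ${\rm Cap}(\Psi)_{\leq}$ and doubling, form a partition of unity $\{\psi_i\}$ subordinate to $\{2B_i\}$ with $\mathcal{E}(\psi_i,\psi_i)\lesssim m(B_i)/\Psi(\epsilon r)$, and set $g=\sum_i f(x_i)\psi_i$. Strong locality and $\sum_i\psi_i=1$ let the energy of $g$ near $B_i$ be bounded by the oscillation of $f$ on a neighbourhood of $B_i$ times the energies of the $\psi_j$. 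This gives
\[
\Gamma(g,g)(B(x,Ar))\lesssim (\epsilon r)^2\,\frac{m(B(x,r))}{\Psi(\epsilon r)} ,
\]
while $\|g-f\|_{L^\infty(B(x,r))}\lesssim\epsilon r$.

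\textbf{Step 3: the competing estimate.} This is the main obstacle. The naive combination of ${\rm PI}(\Psi)$ at scale $r$ with Step~2 gives
\[
c\,r^2\lesssim \Psi(r)\,\frac{(\epsilon r)^2}{\Psi(\epsilon r)},
\]
which is \emph{consistent} with $\Psi(\epsilon r)\ll\epsilon^2\Psi(r)$, so it yields no contradiction. The argument therefore has to exploit the near-affine structure from Step~1 to obtain an \emph{upper} bound on the energy at the coarse scale $r$ that is nevertheless tested at the fine scale $\epsilon r$.

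What I would try first is to reverse the roles of the two hypotheses. Build $g$ using cutoffs at the coarse scale $r$, via ${\rm Cap}(\Psi)_{\leq}$, applied to level sets of the affine model $df_x\circ\varphi$; the near-affinity of $f$ ensures these level sets are well separated. This should give $\Gamma(g,g)(B(x,r))\lesssim r^2\,m(B(x,r))/\Psi(r)$ together with $|g-f|=o(r)$. Then apply ${\rm PI}(\Psi)$ on each fine ball $B_i$, where near-affinity still forces an oscillation of order $\epsilon r$ on a positive proportion of the $B_i$. Summing over $i$, using bounded overlap and doubling, would give
\[
(\epsilon r)^2\,m(B(x,r))\lesssim \Psi(\epsilon r)\,\Gamma(g,g)(B(x,Ar))\lesssim \Psi(\epsilon r)\,\frac{r^2\,m(B(x,r))}{\Psi(r)} ,
\]
that is, $\epsilon^2\Psi(r)\lesssim\Psi(\epsilon r)$ with constants independent of $\epsilon$ and $r$. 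Choosing $\epsilon$ small and then $r\downarrow0$ contradicts the hypothesis on $\Psi$.

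The delicate point is making the coarse-scale construction genuinely inherit fine-scale oscillation of $f$. My plan is to choose the cutoff levels adapted to $f$, setting $g=\eta\circ f$-like compositions of capacitary potentials, so that $g$ agrees with $f$ up to $o(r)$ while its energy is controlled by the coarse capacity bound alone.
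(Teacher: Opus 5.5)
There is a genuine gap, and it is at the step you flag as the main obstacle. The approximant $g$ in Step~3 is not merely delicate to build; no construction of the kind you describe can produce it. Your own fine-scale computation shows what ${\rm PI}(\Psi)$ forces. Suppose $g\in\mathcal{F}$ is $\kappa\epsilon r$-close to $f$ on $B(x,r)$, with $\kappa$ small, and $f$ oscillates (in $L^2$-average) by $\gtrsim\epsilon r$ on a fixed proportion of the balls $B(x_i,\epsilon r)$. Then $\Gamma(g,g)(B(x,Ar))\gtrsim(\epsilon r)^2m(B(x,r))/\Psi(\epsilon r)$, and the hypothesis on $\Psi$ makes this much larger than $r^2m(B(x,r))/\Psi(r)$.

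The capacity bound ${\rm Cap}(\Psi)_{\leq}$ gives coarse-scale energy only for single-scale cutoffs. A staircase of capacitary cutoffs between level sets of $f$ that tracks $f$ to accuracy $\kappa r$ must make $\sim\kappa^{-1}$ transitions across layers of width $\sim\kappa r$. It therefore costs about $(\kappa r)^2m(B(x,r))/\Psi(\kappa r)$, which is just your Step~2 bound again. So the coarse bound could only come from the differentiability hypothesis itself, and your construction does not use it. Nor can it easily: differentiability of Lipschitz functions says nothing directly about $\mathcal{F}$. Compositions with $f$ are unavailable because $f\notin\mathcal{F}$ in general, and in the sub-Gaussian regime $\mathcal{F}$ may contain no non-constant Lipschitz functions at all. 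The ``near-affine rigidity'' of Step~1 does not help either: for $f=\varphi_1$ one has $df_x(\varphi(y)-\varphi(x))=f(y)-f(x)$ identically, so it carries no information about $f$ at fine scales. In short, differentiability in your scheme only yields lower bounds on oscillation, which ${\rm PI}(\Psi)$ turns into lower bounds on energy. No step produces an upper bound that feels the differentiable structure.

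The paper supplies this link by reversing the direction of the argument and working along rectifiable curve fragments. Differentiability enters only through Bate's identity $g_f=\Lip f$ for minimal $\ast$-upper gradients. Uniqueness of differentials forces $\Lip\varphi_1\neq0$ on a set of positive measure, hence $g_{\varphi_1}\neq0$. A Pansu--Bourdon type argument (Lemma~\ref{l:zero-ug}) then produces $(\delta,\delta/C_D)$-connected annuli $B(p,2r)\setminus B(p,r)$ at arbitrarily small scales.

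On such an annulus the paper takes the cutoff $\phi$ from ${\rm Cap}(\Psi)_{\leq}$. It removes the small set where $M_{2r}\Gamma(\phi,\phi)\geq\Lambda/\Psi(r)$, using the weak-type estimate. It then follows a curve fragment from $B(p,r)$ to $X\setminus B(p,2r)$ that avoids this set and has $\gap(\gamma)\leq\delta r$. Along it, the pointwise estimate of Lemma~\ref{lem:PIptwise} together with $\Psi(s)/s^2\to0$ makes $\phi\circ\gamma$ Lipschitz with zero derivative almost everywhere. The gaps change $\phi$ by at most $C\delta^{1/2}\leq\frac12$, contradicting that $\phi$ drops from $1$ to $0$.

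Thus the scaling incompatibility you identified is the right one. It has to be applied to an energy-finite function restricted to curves, where the Dirichlet form gives \emph{upper} bounds on increments. Applying it to energy-finite approximants of a Lipschitz function does not work, since there it only gives lower bounds on energy.
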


In particular, if the Dirichlet form satisfies two-sided
sub-Gaussian heat kernel estimates with walk dimension \(\beta>2\),
then the associated metric measure space does not admit a measurable
differentiable structure. Thus the Gaussian case \(\beta=2\), in
which martingale and analytic dimensions can be compared, is
fundamentally different from the strictly sub-Gaussian case.

	\subsection{A consequence of the Poincar\'e inequality}
 
     Throughout, we assume that $(X,d)$ is a complete, separable,
     locally compact metric space and that $m$ is a Radon measure on it that satisfies $0<m(B(x,r))<\infty$ for all $x \in X, r>0$.   We also assume that $(X,d,m)$ satisfies the doubling property: that is there exists $D>0$ such that 
	\[
	m(B(x,2r)) \le D m(B(x,r)), \quad \mbox{for all $x \in X, r>0$.}
	\]
	We refer to $D$ as a \emph{doubling constant} of $m$.
	Since $(X,d,m)$ is a doubling metric measure space, the Lebesgue
	differentiation theorem \cite[Theorem 1.8]{Hei} implies that, for every
	$f\in L^1_{\operatorname{loc}}(X,m)$,
	\[
	\widetilde{f}(x):=
	\begin{cases}
		\displaystyle
		\lim_{r\downarrow0}
		\frac{1}{m(B(x,r))}
		\int_{B(x,r)}f(y)\,dm(y),
		& \text{if the limit exists},\\[2mm]
		0, & \text{otherwise},
	\end{cases}
	\]
	defines a measurable function satisfying $\widetilde{f}=f$
	$m$-almost everywhere. We call $\widetilde{f}$ the
	\emph{precise representative} of $f$, and throughout we identify
	$f\in L^1_{\operatorname{loc}}(X,m)$ with its precise representative.
	
	We say that $x\in X$ is a \emph{Lebesgue point of $f$} if
	\[
	\lim_{r\downarrow0}
	\frac{1}{m(B(x,r))}
	\int_{B(x,r)}|f(y)-f(x)|\,dm(y)=0.
	\]

	For a Radon measure $\nu$ on $X$, let $M_r\nu(x)=\sup_{s\in (0,r)} \frac{\nu(B(x,s))}{m(B(x,s))}.$ The proof of the following lemma is based on the maximal-function
	telescoping argument of Haj{\l}asz and Koskela \cite{HK00}, adapted here
	to the Poincar\'e inequality with scale function $\Psi$.
 
	\begin{lemma}\label{lem:PIptwise}
		Assume that the metric measure Dirichlet space satisfies $\PI(\Psi)$
		and volume doubling. Then there exist constants $A\geq 1$ and
		$C_{PI}>0$ such that the following holds for every $f\in\mathcal{F}$,
		represented by its precise representative.
		
		If, for some $x\in X$ and $r>0$,
		\[
		M_r\Gamma(f,f)(x)<\infty,
		\]
		then $x$ is a Lebesgue point of $f$.
		
		Moreover, if $x,y\in X$ satisfy
		\[
		M_{A d(x,y)}\Gamma(f,f)(x)<\infty
		\quad\text{and}\quad
		M_{A d(x,y)}\Gamma(f,f)(y)<\infty,
		\]
		then
		\[
		\begin{aligned}
&|f(x)-f(y)|\\
&\quad\leq C_{PI}\Psi(d(x,y))^{1/2}\Bigl(
\left(M_{A d(x,y)}\Gamma(f,f)(x)\right)^{1/2}\\
&\hspace{110pt}+\left(M_{A d(x,y)}\Gamma(f,f)(y)\right)^{1/2}\Bigr).
\end{aligned}
		\]
	\end{lemma}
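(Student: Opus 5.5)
We follow the Haj{\l}asz--Koskela telescoping scheme. Fix $f\in\mathcal{F}$ and write $B_k=B(x,2^{-k}r)$ for $k\ge0$. The first step is to turn $\PI(\Psi)$ into an $L^1$ oscillation bound. By Cauchy--Schwarz and $\PI(\Psi)$,
\[
\fint_{B(z,s)}|f-f_{B(z,s)}|\,dm
\le\Bigl(\fint_{B(z,s)}|f-f_{B(z,s)}|^2\,dm\Bigr)^{1/2}
\le C\,\Psi(s)^{1/2}\Bigl(\frac{\Gamma(f,f)(B(z,As))}{m(B(z,s))}\Bigr)^{1/2}.
\]
Doubling lets us replace $m(B(z,s))$ by $m(B(z,As))$ at the cost of a constant, so the right-hand side is at most $C\Psi(s)^{1/2}\bigl(M_{A's}\Gamma(f,f)(z)\bigr)^{1/2}$ whenever $As<A'r$. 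The constant $A$ in the lemma is a fixed multiple of the $\PI$ constant, enlarged so that the strict inequality in the definition of $M_r$ causes no trouble.

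\emph{Lebesgue points.} Suppose $M_r\Gamma(f,f)(x)=:M<\infty$. Using doubling, $|f_{B_{k+1}}-f_{B_k}|\le C\fint_{B_k}|f-f_{B_k}|\,dm$. Applying the oscillation bound at scale $2^{-k}r$, with the maximal radius comparable to $r$ after shrinking the starting radius by a factor depending on $A$, gives
\[
|f_{B_{k+1}}-f_{B_k}|\le C\,\Psi(2^{-k}r)^{1/2}M^{1/2}.
\]
The lower scaling $\Psi(2^{-k}r)\le C_\Psi 2^{-k\beta_1}\Psi(r)$ with $\beta_1>1>0$ makes $\sum_k\Psi(2^{-k}r)^{1/2}$ geometrically convergent, with sum at most $C\Psi(r)^{1/2}$. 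Hence $f_{B_k}$ is Cauchy and has a limit $L$. The precise representative then gives $f(x)=L$.

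For the Lebesgue point property we estimate, for $2^{-k-1}r\le s<2^{-k}r$,
\[
\fint_{B(x,s)}|f-L|\,dm\le C\fint_{B_k}|f-f_{B_k}|\,dm+|f_{B_k}-L|,
\]
and both terms tend to $0$ by the bounds above. This proves the first claim.

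\emph{Two-point estimate.} Set $\rho=d(x,y)$ and apply the telescoping with $r$ replaced by $\rho$ (after rescaling $A$). This gives $|f(x)-f_{B(x,\rho)}|\le C\Psi(\rho)^{1/2}M_{A\rho}\Gamma(f,f)(x)^{1/2}$, and the analogous bound at $y$. It remains to compare $f_{B(x,\rho)}$ and $f_{B(y,\rho)}$. Both balls lie in $B(x,2\rho)$, which has comparable measure by doubling, so
\[
|f_{B(x,\rho)}-f_{B(y,\rho)}|\le C\fint_{B(x,2\rho)}|f-f_{B(x,2\rho)}|\,dm\le C\Psi(2\rho)^{1/2}M_{A\rho}\Gamma(f,f)(x)^{1/2},
\]
where the last step uses the oscillation bound. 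The upper scaling of $\Psi$ converts $\Psi(2\rho)$ into $C\Psi(\rho)$. Combining these three bounds with the triangle inequality gives the stated inequality.

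\emph{Main obstacle.} The only delicate point is the bookkeeping of radii. We must ensure that every ball $B(z,As)$ used lies strictly within the maximal-function range $A d(x,y)$, or $r$ in the first part. This is handled by choosing $A$ as a sufficiently large multiple of the $\PI$ enlargement constant, and, in the first claim, by starting the telescoping at a radius $r/(2A)$.
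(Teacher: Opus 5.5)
Your proposal is correct and follows essentially the same route as the paper: dyadic telescoping starting from a radius $R$ with $AR<r$, geometric summation via the lower scaling of $\Psi$, and the intermediate-radius estimate, which gives the full limit $\lim_{s\downarrow0}f_{B(x,s)}$ (needed to identify $f(x)$ through the precise representative) and the Lebesgue point property. The two-point estimate is then the same three-term triangle inequality through $f_{B(x,\rho)}$ and $f_{B(y,\rho)}$; the only difference is that you spell out the comparison via $B(x,2\rho)$ and $\Psi(2\rho)\le C\Psi(\rho)$, which the paper leaves implicit in ``after increasing $A$''.
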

	
	\begin{proof}
		We first prove the assertion concerning Lebesgue points. Suppose that
		$M_r\Gamma(f,f)(x)<\infty$, and choose $R>0$ sufficiently
		small that $AR<r$. Set
		$B_i:=B(x,2^{-i}R), \qquad i\geq 0.$
		By the Poincar\'e inequality and volume doubling,
		\[
		|f_{B_i}-f_{B_{i+1}}|
		\leq
		C\Psi(2^{-i}R)^{1/2}
		\left(M_r\Gamma(f,f)(x)\right)^{1/2}.
		\]
		The scaling properties of $\Psi$ imply that
		\[
		\sum_{i=0}^{\infty}\Psi(2^{-i}R)^{1/2}
		\leq C\Psi(R)^{1/2},
		\]
		and hence $(f_{B_i})_{i\geq0}$ is Cauchy.
		
		If $2^{-i-1}R<s\leq2^{-i}R$, another application of the Poincar\'e
		inequality and volume doubling gives
		\[
		|f_{B(x,s)}-f_{B_i}|
		\leq
		C\Psi(2^{-i}R)^{1/2}
		\left(M_r\Gamma(f,f)(x)\right)^{1/2}.
		\]
		Therefore $\lim_{s\downarrow0}f_{B(x,s)}$ exists. Since $f$ is
		represented by its precise representative, this limit equals $f(x)$.
		Furthermore,
		\[
		\frac{1}{m(B(x,s))}\int_{B(x,s)}|f-f_{B(x,s)}|\,dm
		\leq
		C\Psi(s)^{1/2}
		\left(M_r\Gamma(f,f)(x)\right)^{1/2}
		\longrightarrow0.
		\]
		Together with $f_{B(x,s)}\to f(x)$, this yields
		\[
		\frac{1}{m(B(x,s))}\int_{B(x,s)}|f-f(x)|\,dm\longrightarrow0,
		\]
		so $x$ is a Lebesgue point of $f$.
		
		We next prove the pointwise estimate. Set $R=d(x,y)$. By the preceding
		telescoping argument,
		\[
		|f(x)-f_{B(x,R)}|
		\leq
		C\Psi(R)^{1/2}
		\left(M_{AR}\Gamma(f,f)(x)\right)^{1/2},
		\]
		and similarly,
		\[
		|f(y)-f_{B(y,R)}|
		\leq
		C\Psi(R)^{1/2}
		\left(M_{AR}\Gamma(f,f)(y)\right)^{1/2}.
		\]
		After increasing $A$ by a fixed factor if necessary, the Poincar\'e
		inequality and volume doubling also give
		\[
		|f_{B(x,R)}-f_{B(y,R)}|
		\leq
		C\Psi(R)^{1/2}
		\left(M_{AR}\Gamma(f,f)(x)\right)^{1/2}.
		\]
		Combining the preceding three estimates and increasing the constant
		gives
		\[
		|f(x)-f(y)|
		\leq
		C_{PI}\Psi(R)^{1/2}
		\left(
		\left(M_{AR}\Gamma(f,f)(x)\right)^{1/2}
		+
		\left(M_{AR}\Gamma(f,f)(y)\right)^{1/2}
		\right).
		\]
	\end{proof}

	\subsection{Minimal $\ast$-upper gradients}
  The proof of Theorem \ref{t:diff-space} uses some of the main tools of the theory of differentiability spaces: curve fragments and $\ast$-upper gradients, which we now explain. 
	
	A \emph{curve fragment} in $X$ is a bi-Lipschitz map
	$\gamma:\dom(\gamma)\to X$, where $\dom(\gamma)$ is a compact subset
	of $\mathbb{R}$. Let $\Gamma(X)$ denote the collection of all curve
	fragments in $X$. For $\gamma\in\Gamma(X)$, set
	\[
	K:=\dom(\gamma),\qquad a:=\min K,\qquad b:=\max K,
	\]
	and write
	\[
	[a,b]\setminus K=\bigcup_{i\in I}(a_i,b_i),
	\]
	where the intervals $(a_i,b_i)$ are the connected components of
	$[a,b]\setminus K$. These intervals are called the \emph{gaps} of
	$\gamma$, and we define
	\[
	\gap(\gamma):=\sum_{i\in I}(b_i-a_i).
	\]
	For disjoint sets $E,F\subset X$, let $\Gamma(E,F)$ denote the
	collection of all $\gamma\in\Gamma(X)$ such that
	\[
	\gamma(\min\dom(\gamma))\in E
	\quad\text{and}\quad
	\gamma(\max\dom(\gamma))\in F.
	\]
		
	For a curve fragment $\gamma:K\to X$, we define the \emph{metric derivative} as
	$$
	d_\gamma(t):=\lim_{\substack{s\in K\\s\to t}} \frac{d(\gamma(s),\gamma(t))}{|s-t|},
	$$
	which exists at a.e. nonisolated point $t\in \dom(\gamma)$. If $g:X\to [0,\infty]$ is a Borel function, we define
	$$
	\int_\gamma g\,ds := \int_{\dom(\gamma)} g(\gamma(t))d_\gamma(t)\,dt.
	$$
	We say that a curve fragment is \emph{parameterized by unit speed} if $d_\gamma(t)=1$ for a.e. $t\in \dom(\gamma)$ and
	$$
	d(\gamma(a_i),\gamma(b_i))=|a_i-b_i|
	$$	
	for every gap $(a_i,b_i)$, $i\in I$. A unit-speed parametrization can be obtained by embedding $X$ isometrically into $\ell^\infty(X)$ via the Kuratowski embedding \cite[Exercise 12.4]{Hei}, extending the curve piecewise linearly, and applying classical results for curves in \cite[Lemma 1.1.4]{AGS08}; see  \cite[Lemma 2.15]{EBchar} for details. More precisely, it is shown in \cite[Lemma 2.15]{EBchar} that there exists a surjective Lipschitz reparametrization
	$$
	\sigma:\dom(\gamma)\to \dom(\widetilde{\gamma})
	$$
	such that $\widetilde{\gamma}\circ \sigma(t)=\gamma(t)$ for all $t\in \dom(\gamma)$ and $d_{\widetilde{\gamma}}(\sigma(t))=1$ for a.e. $t\in \dom(\gamma)$. The surjectivity of $\sigma$ is established in the proof. Since $\sigma$ is Lipschitz, it maps sets of Lebesgue measure zero to sets of Lebesgue measure zero, and hence $d_{\widetilde{\gamma}}(s)=1$ for a.e. $s\in \dom(\widetilde{\gamma})$. We also recall the standard fact that the above integrals are independent of the choice of parametrization \cite[Lemma 2.15-(5)]{EBchar}.  
	Also, let 
	\[
	\len(\gamma):=\sup_{\stackrel{n\in \N, t_1,\dots, t_n \in K}{t_1\leq \cdots \leq t_n}} \sum_{i=1}^{n-1} d(\gamma(t_{i+1}),\gamma(t_i))
	\]
	be the length of a curve fragment. In \cite[Lemma 2.15-(2),(3)]{EBchar} it is shown that $\len(\gamma)=\len(\widetilde{\gamma})$ and $\gap(\widetilde{\gamma})=\gap(\gamma)$. It follows from \cite[Proof of Lemma 2.15]{EBchar} that $\len(\widetilde{\gamma})=\gap(\widetilde{\gamma})+\int_{\widetilde{\gamma}} 1\, ds$. Thus, for any curve fragment $\gamma \in \Gamma(X)$,
	\begin{equation}
		\len(\gamma)=\gap(\gamma)+\int_\gamma 1 ds.
	\end{equation}
	For any curve fragment $\gamma \in \Gamma(X)$, we get from this 
	\begin{equation}    \label{eq:gammaint} d(\gamma(\max(\dom(\gamma))),\gamma(\min(\dom(\gamma))))\leq \len(\gamma) \leq \gap(\gamma)+\int_\gamma 1\, ds.
	\end{equation}
	
	We need the following lemma for covering a curve fragment parameterized by unit speed.
	
	\begin{lemma} \label{lem:coverfrag} Let $\delta\in(0,1)$, let $\gamma\in\Gamma(X)$ be parameterized
		by unit speed, and let $g\colon X\to[c,M]$ be a bounded lower
		semicontinuous function, where $M>c>0$. For each $n\in \N$, there exists a collection of intervals $\mathcal{I}_n$ in $\R$ with pairwise disjoint interiors such that the following hold. 
		\begin{enumerate}
			\item The endpoints of each $I\in \mathcal{I}_n$ lie in $\dom(\gamma)$.
			\item For each $I=[a,b]\in \mathcal{I}_n$, we have
			\[
			2^{-n-1}\leq d(\gamma(b),\gamma(a)) \leq |b-a|\leq 2^{-n}.
			\]
			
			\item For each $I\in \mathcal{I}_n$ we have $\gap(\gamma|_{I\cap \dom(\gamma)})\leq \delta2^{-n-4}$.
			\item For each $I=[a,b]\in \mathcal{I}_n$
			\[
			\int_{\gamma|_{I\cap \dom(\gamma)}} g ds \leq 2^{1-n}\inf_{p\in B(\gamma(a), 2^{-n})} g(p).
			\]
			\item 
			\[
			\lim_{n\to \infty}\lambda(\dom(\gamma) \setminus \bigcup_{I\in \mathcal{I}_n} I)=0.
			\]
		\end{enumerate}
		
	\end{lemma}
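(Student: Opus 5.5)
The plan is a density-point argument with Egorov's theorem, followed by a greedy left-to-right selection of intervals at each scale $2^{-n}$. Write $K=\dom(\gamma)$ and $\lambda$ for Lebesgue measure. We may assume $\lambda(K)>0$; otherwise (5) is trivial and we take $\mathcal{I}_n$ empty or a single admissible interval. Fix a small absolute constant $\eta>0$, say $\eta=10^{-2}$, and $\epsilon_0:=c/3$.

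\emph{Step 1: pointwise good behaviour at $\lambda$-a.e.\ $t\in K$.} Four facts hold at almost every $t\in K$.
\begin{enumerate}[(i)]
\item $t$ is a Lebesgue density point of $K$. Hence $\lambda([t,t+h]\setminus K)=o(h)$, and so the gaps inside $[t,t+h]$ have total length $o(h)$ and each has length $o(h)$.
\item The metric derivative exists and equals $1$, that is, $d(\gamma(s),\gamma(t))/|s-t|\to1$ as $s\to t$ with $s\in K$.
\item $t$ is a Lebesgue point of the bounded Borel function $g\circ\gamma$ restricted to $K$, so the average of $g\circ\gamma$ over $[t,t+h]\cap K$ tends to $g(\gamma(t))$.
\item By lower semicontinuity of $g$, $\inf_{B(\gamma(t),\rho)}g\ge g(\gamma(t))-\epsilon_0$ for all small $\rho$.
\end{enumerate}
Given $\varepsilon>0$, Egorov's theorem gives a compact set $E\subset K$ with $\lambda(K\setminus E)<\varepsilon$ and an $N$ such that all four convergences are uniform on $E$ at scales $h,\rho\le 2^{2-N}$. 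Concretely, for $n\ge N$ and $t\in E$ we have:
\begin{itemize}
\item the gaps in $[t,t+2^{-n}]$ have total length $\le\delta 2^{-n-4}$ and each has length $\le 2^{-n-4}$;
\item $\bigl|d(\gamma(s),\gamma(t))-|s-t|\bigr|\le\eta|s-t|$ for $s\in K\cap[t,t+2^{-n}]$;
\item the average of $g\circ\gamma$ on $[t,t+h]\cap K$ is $\le g(\gamma(t))+\epsilon_0$ for $h\le2^{-n}$;
\item $\inf_{B(\gamma(t),2^{1-n})}g\ge g(\gamma(t))-\epsilon_0$.
\end{itemize}

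\emph{Step 2: greedy selection for fixed $n\ge N$.} Let $a_1:=\min E$. Because the gaps near $a_1$ have length $\le 2^{-n-4}$, there is $b_1\in K$ with $b_1-a_1\in[0.55\cdot2^{-n},0.95\cdot2^{-n}]$. Set $I_1=[a_1,b_1]$. Inductively, let $a_{k+1}:=\min(E\cap[b_k,\infty))$ and choose $b_{k+1}$ in the same way. Stop when $a_k+2^{-n}>\max K$ or when $E\cap[b_k,\infty)$ is empty. The intervals have disjoint interiors, and property (1) holds by construction. We then verify the remaining properties for each $I=[a,b]$ as follows.
\begin{itemize}
\item Property (2). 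The upper bound $d(\gamma(b),\gamma(a))\le\len(\gamma|_{I\cap K})\le|b-a|$ holds because unit speed gives metric derivative $1$ and gaps of equal length, and we apply \eqref{eq:gammaint} with its equality version. The lower bound is $d\ge(1-\eta)0.55\cdot2^{-n}\ge2^{-n-1}$.
\item Property (3) follows directly from the uniform gap bound.
\item Property (4). We have $\int_{\gamma|_{I\cap K}}g\,ds\le|b-a|\,(g(\gamma(a))+\epsilon_0)$. Since $B(\gamma(a),2^{-n})\subset B(\gamma(a),2^{1-n})$, the infimum over this ball is $\ge g(\gamma(a))-\epsilon_0$. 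Because $g(\gamma(a))\ge c=3\epsilon_0$, the ratio $(g+\epsilon_0)/(g-\epsilon_0)$ is at most $2$, which gives the bound $2^{1-n}\inf g$.
\end{itemize}

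\emph{Step 3: the uncovered set, property (5).} Between $b_k$ and $a_{k+1}$ there are only points of $K\setminus E$. Therefore $K\setminus\bigcup\mathcal{I}_n$ is contained in the union of $K\setminus E$ and a terminal interval of length $\le2^{-n}$. Its measure is thus $\le\varepsilon+2^{-n}$. To obtain the limit statement, we take $\varepsilon_j\downarrow0$ with corresponding thresholds $N_j$, which may be chosen increasing. For $N_j\le n<N_{j+1}$ we use the construction with $(E_j,\varepsilon_j)$. For $n<N_1$ any admissible family, possibly empty, will do, since (5) is only asymptotic.

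The main obstacle is Step 1. It must simultaneously give a lower bound on $d(\gamma(b),\gamma(a))$, which needs near-isometry at the starting point and not just unit speed; an upper bound on the integral of $g$, which comes from Lebesgue points of $g\circ\gamma$ and not from semicontinuity; and a lower bound on the infimum of $g$ over a ball, which is where lower semicontinuity is used. Each of these properties must hold uniformly at the left endpoints. This is why the left endpoints are always taken in the Egorov set $E$, and only the right endpoints are allowed to be arbitrary points of $K$.
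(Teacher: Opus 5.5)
Your proof is correct and follows essentially the same route as the paper. Both arguments choose good left endpoints using four properties: density of $K$, near-isometry of $\gamma$, the average of $g\circ\gamma$ on $[t,t+2^{-n}]$, and lower semicontinuity of $g$. Both then make a greedy left-to-right selection with right endpoints in $K$ at distance comparable to $2^{-n}$.

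The difference is only organizational. You use Egorov's theorem, a compact set $E$, a diagonal choice of $(\varepsilon_j,N_j)$, and an allowance for a terminal interval. The paper instead works directly with the set $\widetilde K_n$ of points satisfying the three scale-$2^{-n}$ conditions and checks that it is closed. It gets $\lambda(K\setminus\widetilde K_n)\to0$ because almost every $t\in K$ lies in $\widetilde K_l$ for all large $l$. Its selection covers all of $\widetilde K_n$ with nothing left over, since the density condition at $a_i$ already forces a point of $K$ in $[a_i+3\cdot2^{-n-2},a_i+2^{-n})$.
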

	\begin{proof} Define $K:=\dom(\gamma)$.
		Let $\widetilde{K}_n$ be the set of $t\in K$ such that the following hold: 
		\begin{enumerate}[(a)]
			\item 
			\begin{equation} \label{e:1cond}
				\frac{\lambda([t,t+2^{-n}]\setminus K)}{2^{-n}}\leq 2^{-4}\delta.
			\end{equation}
			\item  For all $s\in K\cap (t-2^{-n},t+2^{-n}) \setminus \{t\}$ 
			\begin{equation} \label{e:2cond}
				\frac{d(\gamma(s),\gamma(t))}{|t-s|}\geq (1-2^{-2}).
			\end{equation}
			\item 
			\begin{equation} \label{e:3cond}
				\int_{\gamma|_{[t, t+2^{-n}]\cap \dom(\gamma)}} g\, ds \leq 2^{1-n}\inf_{p\in B(\gamma(t), 2^{-n})} g(p).
			\end{equation}
		\end{enumerate}
		We claim that the set $\widetilde K_n.$ is closed. To see this, assume that $t_i\in \widetilde K_n.$ are such that $\lim_{i\to \infty} t_i=t$. By the dominated convergence theorem   
		\[
		\lim_{i\to \infty } \frac{\lambda([t_i,t_i+2^{-n}]\setminus K)}{2^{-n}}=\frac{\lambda([t,t+2^{-n}]\setminus K)}{2^{-n}}
		\]
		and  hence $t$ satisfies (a). Now, assume that $t$ fails (b). Then there exists $s\in K \cap (t-2^{-n}, t+2^{-n}) \setminus \{t\}$ for which the opposite inequality would hold strictly. Since $\lim_{i\to \infty} t_i=t$,  $s\in K \cap (t_i-2^{-n}, t_i+2^{-n})$ for all large enough $i$ and 
		\[
		\lim_{i\to \infty } \frac{d(\gamma(s),\gamma(t_i))}{|t_i-s|} =  \frac{d(\gamma(s),\gamma(t))}{|t-s|} < (1-2^{-2})
		\]
		implies that $t_i$ doesn't satisfy (b) for large enough $i$, which is a contradiction. Thus $t$ satisfies (b). Finally, for (c), notice that the function $$t \mapsto 	\int_{\gamma|_{[t, t+2^{-n}]\cap \dom(\gamma)}} g\, ds$$ is continuous in $t$ by the estimate \[
		\begin{aligned}
&\abs{\int_{\gamma|_{[t, t+2^{-n}]\cap \dom(\gamma)}} g\,ds-
\int_{\gamma|_{[t', t'+2^{-n}]\cap \dom(\gamma)}} g\,ds}\\
&\qquad\le 2\sup\abs{g}\abs{t-t'}\le 2M\abs{t-t'}.
\end{aligned}\]
		On the other hand, the function $t \mapsto \inf_{p\in B(\gamma(t), 2^{-n})} g(p)$ is upper-semicontinuous in $t$, since the function $x \mapsto \inf_{p \in B(x,2^{-n})} g(p)$ is upper-semicontinuous due to $B(x,2^{-n})$ being open. This implies that the set  $t \in K$ satisfying \eqref{e:3cond} is a closed set, which concludes the proof of the  claim that the set $\widetilde{K}_n$ is closed.
		
		Next, we claim that
		\begin{equation}  \label{e:Kn-meas}
			\lim_{n\to \infty} \lambda(K\setminus \widetilde{K}_n)=0.
		\end{equation}
		Lebesgue differentiation implies that 
		\[
		\lim_{n \to \infty} \frac{\lambda([t,t+2^{-n}] \setminus K)}{2^{-n}}=0, \quad \mbox{for a.e.~$t \in K$.}
		\]
		The choice of  unit speed parametrization for $\gamma$   imply that 
		\[
		\lim_{s \to t}  	\frac{d(\gamma(s),\gamma(t))}{|t-s|}= 1, \quad \mbox{for a.e.~$t \in K$.}
		\]
		Lebesgue differentiation and the lower semicontinuity of $g$ implies that 
		\[
		\begin{aligned}
\lim_{n\to\infty}2^n\int_{\gamma|_{[t,t+2^{-n}]\cap\dom(\gamma)}}g\,ds
&=g(\gamma(t))\\
&=\lim_{n\to\infty}\inf_{p\in B(\gamma(t),2^{-n})}g(p),
\end{aligned}
\]
for a.e.~$t\in K$.
		Hence for a.e.~$t \in K$, there exists $n \in \mathbb{N}$ such that $t \in \bigcap_{l \ge n}\widetilde{K}_l$. Thus \eqref{e:Kn-meas} follows from
		\begin{align*}
			\limsup_{n\to \infty} \lambda(K \setminus \widetilde{K}_n) &\le \lim_{n \to \infty} \lambda\left(K \setminus \bigcap_{l \ge n}\widetilde{K}_l\right)\\
			&= \lambda \left(\bigcap_{n=1}^\infty \left(K \setminus \bigcap_{l \ge n}\widetilde{K}_l\right) \right)\\
&=\lambda\left(K\setminus\bigcup_{n=1}^\infty\bigcap_{l\ge n}\widetilde{K}_l\right)=0.
		\end{align*}

	 If $\widetilde K_n=\varnothing$, set $\mathcal I_n=\varnothing$,
	 and no further construction is needed. For the remainder, assume that
	 $\widetilde K_n\neq\varnothing$ and let
	 $a_1=\min(\widetilde K_n)$. Since $\widetilde K_n$ is closed,
	 $a_1\in\widetilde K_n$. Hence by \eqref{e:1cond}, there exists a $b_1\in K\cap [a_1+3\cdot 2^{-n-2},a_1+2^{-n})$. Set $I_1=[a_1,b_1]$. We next describe the recursive construction of $a_{i+1},b_{i+1}, I_{i+1}$ 
		assuming $a_i,b_i,I_i=[a_i,b_i]$ have been defined such that $a_i \in \widetilde{K}_n$ and $b_i \in K$.
		If $\max(\widetilde{K}_n)< b_i$, we stop the recursive construction and set 
		\[
		\mathcal{I}_n:= \{[a_j,b_j]: j=1,\ldots,i\}.
		\]

		Next, we consider the case $\max(\widetilde{K}_n) \ge b_i$.
		In this case, let $a_{i+1}=\inf(\widetilde{K}_n\cap [b_i,\infty))$. 
		Since $\widetilde{K}_n$ is closed, $a_{i+1} \in \widetilde{K}_n$. By \eqref{e:1cond}, there exists a $b_{i+1}\in K\cap [a_{i+1}+3\cdot 2^{-n-2},a_{i+1}+2^{-n})$. Set $I_{i+1}=[a_{i+1},b_{i+1}]$.  
		
		By compactness of $K$, and since each interval $I_i$ has length at least $3\cdot2^{-n-2}$, this recursion stops after finitely many steps   and yields a collection $\mathcal{I}_n=\{I_1,\dots, I_N\}.$ Since the intervals were constructed with increasing end points, the intervals have pairwise disjoint interiors. We now show the following properties for $i=1,\dots, N$:
		\begin{enumerate}[(i)]
			\item $a_i\in \widetilde{K}_n$, $b_i > a_i$ and $b_i\in K$.
			\item   $I_i=[a_i,b_i]$ satisfies all the properties (1)-(5) in the statement of the Lemma.
		\end{enumerate}
		The properties (i) and (1) are immediate from the construction and the fact that $\widetilde{K}_n \subset K$. For verifying (2), note that
		\[
		d(\gamma(b_i),\gamma(a_i))\stackrel{\eqref{e:2cond}}{\geq}  (1-2^{-2})(b_i-a_i) \geq 3(1-2^{-2})2^{-n-2} \geq 2^{-n-1}.
		\]
		Since $a_i\in \widetilde{K}_n$ and $a_i<b_i \leq a_i+2^{-n}$, we have
		\[
		\frac{\lambda([a_i,b_i]\setminus K)}{2^{-n}}\leq \frac{\lambda([a_i,a_i+2^{-n}]\setminus K)}{2^{-n}}\stackrel{\eqref{e:1cond}}{\leq} 2^{-4}\delta.
		\]
		Thus we have $\gap\bigl(\gamma|_{I_i\cap\dom(\gamma)}\bigr)\leq \delta 2^{-n-4}$, which yields $(3)$. 
		
		Now, $(4)$ follows since  $\int_{\gamma|_{I_i\cap \dom(\gamma)}} g \, ds \leq \int_{\gamma|_{[a_i, a_i+2^{-n}]\cap \dom(\gamma)}} g \, ds$ and $a_i \in \widetilde{K}_n$.
		
		It remains to show that $(5)$ holds. To show this, it suffices to observe that the construction ensures that $\mathcal{I}_n$ covers $\widetilde{K}_n$, and to apply \eqref{e:Kn-meas}.
	\end{proof}
	
	Let $f\in \LIP(X)$. We call a Borel function $g\colon X\to[0,M]$, for some
	$M>0$, a \emph{$\ast$-upper gradient of $f$} if there exists
	a constant $L>0$ such that, for every $\gamma\in\Gamma(X)$,
	\begin{equation}\label{def:astuppergrad}
		\left|f\bigl(\gamma(\min K)\bigr)
		-f\bigl(\gamma(\max K)\bigr)\right|
		\leq \int_\gamma g\,ds+L\gap(\gamma),
	\end{equation}
	where $K=\dom(\gamma)$.
	We say that $g$ is a \emph{minimal $\ast$-upper gradient} of $f$ if $g$ is a $\ast$-upper gradient of $f$ and satisfies $g'\geq g$ $m$-almost everywhere for every $\ast$-upper gradient $g'$ of $f$.  The minimal $\ast$-upper gradient is often denoted by $g_f$. By \cite[Proposition 2.10]{bate2024fragment}, a minimal $\ast$-upper gradient exists and is unique up to $m$-almost everywhere equivalence. 
	\begin{lemma}\label{lem:astupperbounded}
		Let $f \in \LIP(X)$. 
		If $g$ is an $\ast$-upper gradient of $f$, then $\min(g,\LIP[f])$ is an $\ast$-upper gradient of $f$ and \eqref{def:astuppergrad} is true with $L=\LIP[f]$.
	\end{lemma}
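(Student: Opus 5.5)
The plan is to fix $\gamma\in\Gamma(X)$ with $K=\dom(\gamma)$, $a=\min K$, $b=\max K$, and write $h:=\min(g,\LIP[f])$. Here $g$ is a $\ast$-upper gradient satisfying \eqref{def:astuppergrad} with some constant $L$. We must prove
\[
|f(\gamma(a))-f(\gamma(b))|\leq \int_\gamma h\,ds+\LIP[f]\gap(\gamma).
\]
Clearly $h$ is Borel with values in $[0,\LIP[f]]$, so only this inequality is at stake. We may assume $\LIP[f]>0$, since otherwise $f$ is constant. We may also pass to the unit-speed reparametrization from \cite[Lemma 2.15]{EBchar}, which preserves $\gap$ and the line integrals. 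After this, $\int_\gamma u\,ds=\int_K u(\gamma(t))\,dt$, and the Lipschitz bound gives, for $s<t$ in $K$,
\[
|f(\gamma(s))-f(\gamma(t))|\leq \LIP[f]\,d(\gamma(s),\gamma(t))\leq \LIP[f]\bigl(\lambda([s,t]\cap K)+\gap(\gamma|_{[s,t]\cap K})\bigr),
\]
by \eqref{eq:gammaint}. The idea is to use this Lipschitz estimate on the part of the parameter set where $g>\LIP[f]$ and on the gaps, and to use \eqref{def:astuppergrad} for $g$ only on pieces where $g\le\LIP[f]$ and the total gap is tiny. On those pieces $g=h$, and the bad constant $L$ only multiplies something small.

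\textbf{Construction.} Let $A:=\{t\in K: g(\gamma(t))\leq \LIP[f]\}$ and fix $\epsilon>0$. By inner regularity, choose a compact $A_c\subset A$ with $\lambda(A\setminus A_c)<\epsilon$, and set $K_1:=A_c\cup\{a,b\}$, a compact set. The complement $[a,b]\setminus K_1$ is a countable union of open intervals of finite total length. Choose finitely many of them, $(s_1,t_1),\dots,(s_k,t_k)$, ordered left to right with $s_j,t_j\in K_1$, so that the remaining components have total length $<\epsilon$. Then $[a,b]$ splits into the ``removed'' intervals $[s_j,t_j]$ and the ``kept'' intervals $[t_j,s_{j+1}]$ (with $t_0=a$, $s_{k+1}=b$), and we telescope $f(\gamma(b))-f(\gamma(a))$ along the points $a,s_1,t_1,\dots,s_k,t_k,b$.

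\textbf{Estimates on each type of interval.}
\begin{itemize}
\item On a kept interval $J=[t_j,s_{j+1}]$, apply \eqref{def:astuppergrad} to the fragment $\gamma|_{J\cap K_1}$. This gives the bound $\int_{J\cap K_1} g(\gamma(t))\,dt+L\gap(\gamma|_{J\cap K_1})$. Since $K_1\setminus\{a,b\}\subset A$, we have $g=h$ there. The gaps of $\gamma|_{J\cap K_1}$ are exactly the non-selected components of $[a,b]\setminus K_1$ inside $J$, so their sum over all $j$ is $<\epsilon$.
\item On a removed interval $[s_j,t_j]$, use the Lipschitz estimate above. This gives $\LIP[f]\bigl(\lambda([s_j,t_j]\cap K)+\gap(\gamma|_{[s_j,t_j]\cap K})\bigr)$.
\item To compare with $h$, note that $[s_j,t_j]\cap K\subset (K\setminus A)\cup(A\setminus A_c)$. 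On $K\setminus A$ we have $h=\LIP[f]$. The set $A\setminus A_c$ has measure $<\epsilon$. Hence $\LIP[f]\lambda([s_j,t_j]\cap K)\le\int_{[s_j,t_j]\cap K}h(\gamma(t))\,dt+\LIP[f]\cdot\lambda\bigl((A\setminus A_c)\cap[s_j,t_j]\bigr)$.
\end{itemize}

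\textbf{Conclusion.} Summing, the kept and removed parameter sets are disjoint, and the gaps of $\gamma$ inside the removed intervals sum to at most $\gap(\gamma)$. We obtain
\[
|f(\gamma(a))-f(\gamma(b))|\leq \int_\gamma h\,ds+\LIP[f]\gap(\gamma)+(L+\LIP[f])\epsilon ,
\]
and letting $\epsilon\to0$ finishes the argument.

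The step I expect to need the most care is the bookkeeping in the construction. One must ensure the selected gaps of $K_1$ have endpoints in $K_1$, so that the restricted fragments are legitimate curve fragments with the right endpoints. One must also ensure that the gaps of $\gamma|_{J\cap K_1}$ are exactly the unselected small components, so that the factor $L$ only multiplies a quantity below $\epsilon$. Both hold because components of $[a,b]\setminus K_1$ are open intervals with endpoints in the compact set $K_1$.
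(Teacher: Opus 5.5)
Your proof is correct, but it takes a different route from the paper. The paper argues pointwise. It asserts, via Lebesgue differentiation, that $g$ being a $\ast$-upper gradient is equivalent to $|(f\circ\gamma)'(t)|\le g(\gamma(t))\,d_\gamma(t)$ for a.e.\ $t\in\dom(\gamma)$ and every $\gamma\in\Gamma(X)$. Combined with $|(f\circ\gamma)'|\le\LIP[f]\,d_\gamma$, this gives the bound with $\min(g,\LIP[f])$ at once. The inequality with $L=\LIP[f]$ then follows by writing $f(\gamma(\max K))-f(\gamma(\min K))$ as $\int_K(f\circ\gamma)'\,dt$ plus the increments across the gaps.

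You never differentiate $f\circ\gamma$. Instead you approximate $\{t:g(\gamma(t))\le\LIP[f]\}$ from inside by a compact set. You invoke the defining inequality for $g$ only on sub-fragments of total gap below $\epsilon$, so the unknown constant $L$ is harmless. On the complementary intervals you use the Lipschitz bound for the $1$-Lipschitz unit-speed fragment, where $\min(g,\LIP[f])=\LIP[f]$ off a set of measure $<\epsilon$.

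The paper's route is shorter and yields a reusable pointwise characterization; for instance, the minimum of any two $\ast$-upper gradients is again one. However, it leaves implicit why the term $L\gap$ disappears in the differentiation step, and how $f\circ\gamma$ is extended across gaps to apply the fundamental theorem of calculus. Your route is self-contained, needing only the integral form of the definition and inner regularity of $\lambda$, at the price of the bookkeeping you flagged.

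The one step you leave tacit is $\int_{\gamma|_{J\cap K_1}}g\,ds=\int_{J\cap K_1}g(\gamma(t))\,dt$. This holds because almost every point of $J\cap K_1$ is non-isolated, so the metric derivative of the restricted fragment agrees there with $d_\gamma=1$.
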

	\begin{proof}
		By Lebesgue differentiation, the condition that $g$ is a $\ast$-upper gradient of $f$ is equivalent to $|(f\circ \gamma)'(t)|\leq g(\gamma(t))d_\gamma(t)$ for a.e. $t\in \dom(\gamma)$ and for all $\gamma \in \Gamma(X)$. Since $|(f\circ \gamma)'(t)|\leq \LIP[f] d_\gamma(t)$, we see that $\min(g,\LIP[f])$ is always an $\ast$-upper gradient of $f$ whenever $g$ is one. 
		
		Let $K=\dom(\gamma)$. Now, if $(a_i,b_i), i\in I$ are the gaps of $\gamma$, then 
		\[
		\begin{aligned}
&f(\gamma(\max(K)))-f(\gamma(\min(K)))\\
&\quad=\int_{\dom(\gamma)}(f\circ\gamma)'(t)\,dt
+\sum_{i\in I}\left(f(\gamma(b_i))-f(\gamma(a_i))\right).
\end{aligned}
		\]
		From this, and the previous paragraph we immediately get \eqref{def:astuppergrad} with $L=\LIP[f]$.
	\end{proof}

	\begin{lemma}\label{lem:mazur}
		Assume that $g_i:X\to[0,L]$ is measurable for every $i\in\mathbb{N}$. Then, for every bounded Borel set $A\subset X$, there exists a measurable function $g:X\to[0,L]$, depending on $A$, such that
		$$
		\int_A g\,dm\leq \liminf_{i\to\infty}\int_A g_i\,dm
		$$
		and, for every $\gamma\in\Gamma(X)$,
		$$
		\int_\gamma g\,ds\geq \liminf_{i\to\infty}\int_\gamma g_i\,ds.
		$$
	\end{lemma}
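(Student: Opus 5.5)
We will use a Mazur-type argument: pass to convex combinations that converge in $L^2(A,m)$ and also converge almost everywhere, and then take $g$ to be a liminf of these combinations. Fix $A$ bounded Borel. Since $m(A)<\infty$ and $0\le g_i\le L$, the sequence $(g_i\one_A)$ is bounded in $L^2(A,m)$.

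First choose a subsequence $g_{i_k}$ with
\[
\lim_{k}\int_A g_{i_k}\,dm=\liminf_{i\to\infty}\int_A g_i\,dm .
\]
Pass to a further subsequence converging weakly in $L^2(A,m)$ to some $h$. By Mazur's lemma there are convex combinations
\[
h_k=\sum_{j=k}^{N_k}\lambda_{k,j}\,g_{i_j}
\]
with $h_k\to h$ in $L^2(A,m)$. Passing to a subsequence once more, $h_k\to h$ $m$-a.e.\ on $A$. We then define, on all of $X$ (not just on $A$),
\[
g:=\liminf_{k\to\infty} h_k .
\]
This is measurable with values in $[0,L]$, since each $h_k$ is a convex combination of functions with values in $[0,L]$.

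For the first inequality: on $A$ we have $g=h$ a.e., so
\[
\int_A g\,dm=\lim_k\int_A h_k\,dm .
\]
Each $\int_A h_k\,dm$ is a convex combination of the numbers $\int_A g_{i_j}\,dm$ with $j\ge k$, and these converge to $\liminf_i\int_A g_i\,dm$. Hence $\int_A g\,dm$ is at most that liminf; in fact equality holds.

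For the curve inequality, fix $\gamma\in\Gamma(X)$. The functions $t\mapsto h_k(\gamma(t))d_\gamma(t)$ are nonnegative on $\dom(\gamma)$, so Fatou's lemma gives
\[
\int_\gamma g\,ds\ \ge\ \ \text{— wrong direction.}
\]
Fatou only yields $\int_\gamma g\,ds\le\liminf_k\int_\gamma h_k\,ds$, which is the opposite of what is needed. We must therefore take a limsup instead: set $g:=\limsup_k h_k$. This is still measurable, still takes values in $[0,L]$, and still equals $h$ a.e.\ on $A$, so the first inequality is unaffected. Since the integrands are dominated by $L\,d_\gamma$, which is integrable on the compact set $\dom(\gamma)$ because $\gamma$ is Lipschitz, the reverse Fatou lemma gives
\[
\int_\gamma g\,ds\ge\limsup_k\int_\gamma h_k\,ds .
\]
It remains to compare with the original sequence. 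Each $\int_\gamma h_k\,ds$ is a convex combination of $\int_\gamma g_{i_j}\,ds$ for $j\ge k$, so
\[
\int_\gamma h_k\,ds\ \ge\ \inf_{j\ge k}\int_\gamma g_{i_j}\,ds .
\]
Letting $k\to\infty$,
\[
\limsup_k\int_\gamma h_k\,ds\ \ge\ \liminf_j\int_\gamma g_{i_j}\,ds\ \ge\ \liminf_{i}\int_\gamma g_i\,ds ,
\]
where the last step holds because a subsequence has liminf at least that of the full sequence.

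The main subtlety is choosing the pointwise limit functional correctly. It must be a limsup, so that dominated (reverse) Fatou applies along every curve fragment at once, including the uncountably many $\gamma$ that may lie in $m$-null sets. It must also agree with the $L^2$ limit on $A$, which the a.e.\ convergence of the Mazur combinations guarantees. Because $g$ is defined everywhere by a pointwise formula, no exceptional set depends on $\gamma$.
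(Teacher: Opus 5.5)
Your argument is correct and follows the paper's proof: pass to a subsequence realizing the $\liminf$ on $A$, use weak compactness and Mazur to get convex combinations of tails converging a.e.\ on $A$, define $g$ pointwise everywhere so that $g\geq\limsup_k h_k$, and apply (reverse) Fatou along each fragment with the integrable bound $L\,d_\gamma$. The differences are cosmetic. You use weak compactness in $L^2(A,m)$ where the paper uses Dunford--Pettis in $L^1$, and you take $g=\limsup_k h_k$ everywhere rather than the a.e.\ limit on $A$ and $L$ elsewhere; the paper's Fatou applied to $L-\tilde g_k$ is exactly your reverse Fatou. In the write-up, remove the abandoned $\liminf$ attempt and define $g$ as the $\limsup$ from the start.
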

	\begin{proof}
		Let $A \subset X$ be bounded. After passing to a subsequence, we may assume that $$\liminf_{i\to \infty} \int_A g_i\, dm=\lim_{i\to\infty}\int_A g_i\, dm.$$ 
		By Dunford--Pettis theorem \cite[Theorem 4.30]{Brezis} and \cite[Exercise 4.36]{Brezis}, the set $\{\restr{g_n}{A}:n \in \mathbb{N}\}$ is relatively compact in the weak topology of $L^1(A,m|_A)$.
		By using Mazur's lemma \cite[Corollary 3.6, Exercise 3.4-(1)]{Brezis} and taking convex combinations $\tilde{g}_k=\sum_{i=k}^{N_k} \alpha_{k,i} g_i$, we can ensure that $\restr{\tilde{g}_k}{A}$ converges in $L^1(A,m|_A)$. By passing to a further subsequence   if necessary \cite[Theorem 4.9]{Brezis}, we may assume that $\tilde{g}_k$ converge pointwise $m$-a.e.~in $A$. Define $g:X \to [0,L]$ by
		$$g(x)=
		\begin{cases}
			\displaystyle \lim_{k\to\infty}\widetilde g_k(x),
			& \text{if }x\in A\text{ and the limit exists},\\
			L, & \text{otherwise}.
		\end{cases}$$
		Since $\tilde{g}_k \le L$ everywhere, we have
		\[
		0 \le L - g(x) \le \liminf_{k \to \infty} (L-\tilde{g}_k(x)), \quad  \mbox{for all $x \in X$.}
		\]
		Therefore by Fatou's lemma and linearity of integrals, we have 
		\[
		\int_\gamma g\,ds \ge \limsup_{k \to \infty} \int_\gamma \tilde{g}_k\,ds \ge \liminf_{i \to \infty} \int_\gamma g_i\,ds, \quad \mbox{for all $\gamma \in \Gamma(X)$.} 
		\]
		Since $\tilde{g}_k$ converges in $L^1(A,m|_A)$ to $g$, we have
		\[
		\begin{aligned}
\int_A g\,dm
&=\lim_{k\to\infty}\int_A\tilde{g}_k\,dm\\
&=\lim_{k\to\infty}\sum_{i=k}^{N_k}\alpha_{k,i}\int_A g_i\,dm\\
&=\lim_{i\to\infty}\int_A g_i\,dm
=\liminf_{i\to\infty}\int_A g_i\,dm.
\end{aligned} 
		\]
	\end{proof}
	
	We adapt a definition of connectivity from \cite[Definition 3.1]{EBchar} and \cite[Lemma 3.5]{LiBate}. In these references, such a condition arose in the context of proving Poincar\'e inequalities for spaces supporting a stronger form of differentiation.
	\begin{definition} \label{d:connect} Let $\delta,\epsilon \in (0,1)$. We say that an annulus $B(x,2r)\setminus B(x,r)$ is $(\delta,\epsilon)$-connected if for every Borel set $E\subset B(x,2r)$  with $m(E)/m(B(x,2r))<\epsilon$ there exists a $\gamma\in \Gamma(B(x,r), X\setminus B(x,2r))$ with $\gamma(\dom(\gamma)) \cap E\subset \{\gamma(\min(\dom(\gamma))\}$ and
		\[
		\gap(\gamma)\leq \delta r.
		\]
	\end{definition}

	The argument in the following lemma is inspired by the proof that positive modulus implies a lower bound for conformal dimension. The original argument is due to Pansu and Bourdon \cite{pansu, bourdon}, but the presentation here is closer to that of \cite{Tyson} see also \cite[Theorem 4.1]{bhw}. In these proofs, one can find a sequence of functions with small $L^p$-norm, and extract an estimate for modulus by taking a limit of such functions. Here, the role of modulus is replaced by the $\ast$-upper-gradient
	property, while poorly connected annuli provide the deformations needed
	in the argument. These deformations are used to modify the minimal
	$\ast$-upper gradient and construct a sequence of $\ast$-upper gradients
	that ultimately contradicts its minimality.
	
	\begin{lemma} \label{l:zero-ug}  Let $(X,d)$ be a complete separable metric space, and let $m$ be a doubling measure on $X$ with doubling constant $D$. Set $C_D:=2^9D^3$. Let $f\in\LIP(X)$, and suppose that its minimal $\ast$-upper gradient $g_f$ satisfies
		\begin{equation} \label{e:nonzero-gf}
			m(\{x\in X:g_f(x)>0\})>0.
		\end{equation}
		Then, for every $\delta\in(0,1)$ and $r_0>0$, there exist $x\in X$ and $r\in(0,r_0)$ such that the annulus $B(x,2r)\setminus B(x,r)$
		is $(\delta,\delta/C_D)$-connected.
	\end{lemma}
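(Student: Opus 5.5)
We argue by contradiction. Suppose that for some $\delta\in(0,1)$ and $r_0>0$, every annulus $B(x,2r)\setminus B(x,r)$ with $r<r_0$ fails to be $(\delta,\delta/C_D)$-connected. Then for each such $x,r$ there is a Borel set $E_{x,r}\subset B(x,2r)$ with $m(E_{x,r})<(\delta/C_D)\,m(B(x,2r))$ that blocks every fragment from $B(x,r)$ to $X\setminus B(x,2r)$ with gap at most $\delta r$. The goal is to build $\ast$-upper gradients of $f$ that are strictly smaller than $g_f$ in integral over some bounded set, which contradicts minimality. Lemma~\ref{lem:mazur} will be used to pass to a limit.

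The modification works as follows. By \eqref{e:nonzero-gf}, Lemma~\ref{lem:astupperbounded} and Lusin-type regularization, we fix a bounded set $A$ of positive measure on which $g_f\ge c>0$. We may replace $g_f$ by a lower semicontinuous $\ast$-upper gradient $g\colon X\to[c',M]$ whose integral over $A$ is close to $\int_A g_f\,dm$; this is the Vitali--Carathéodory step, and adding a small constant keeps $g$ bounded below. For each scale $n$ with $2^{-n}\ll r_0$, take a maximal $2^{-n}$-separated net $\{x_j\}$, and let $E_n:=\bigcup_j E_{x_j,2^{-n}}$. Define
\[
g_n:=\one_{X\setminus E_n}\,\eta g+\one_{E_n}\,K g,
\]
with $\eta<1$ fixed (say $\eta=1/2$) and $K$ a large constant depending on $D$ and $\LIP[f]$. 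Doubling and the bounded overlap of the net give $m(E_n\cap A)\lesssim D^3(\delta/C_D)\,m(A')$, where $A'$ is a neighbourhood of $A$. Hence $\int_A g_n\,dm\le \eta\int_A g\,dm+O(K\delta/2^9)$, which is strictly less than $\int_A g_f\,dm$ once the constants are arranged. The constant $C_D=2^9D^3$ is exactly what makes this counting work.

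The main obstacle is showing that $g_n$ is, up to an error vanishing as $n\to\infty$, a $\ast$-upper gradient of $f$ with a uniform constant $L$. Given a unit speed fragment $\gamma$, I apply Lemma~\ref{lem:coverfrag} at scale $n$ to obtain intervals $I=[a,b]$ with $d(\gamma(a),\gamma(b))\approx 2^{-n}$ and small gaps $\le\delta2^{-n-4}$. On each $I$ the subfragment leaves some ball $B(x_j,2^{-n-2})$ around a net point near $\gamma(a)$ and exits its double. By non-connectedness it must meet $E_{x_j,\cdot}\subset E_n$ on a set of positive length; otherwise it would be a connecting fragment with gap $\le\delta r$. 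I then need the stronger statement that the length it spends in $E_n$ is comparable to $2^{-n}$. To get this, I would first apply non-connectedness along a subdivision into sub-annuli, or use the lower semicontinuity estimate (4), so that $K g$ on that portion dominates $|f(\gamma(b))-f(\gamma(a))|\le \LIP[f]\,2^{-n}$. Summing over $I\in\mathcal I_n$, and controlling the uncovered part via (5) and the gaps via $L\gap(\gamma)$, gives $|f(\gamma(\max))-f(\gamma(\min))|\le\int_\gamma g_n\,ds+L\gap(\gamma)+o(1)$.

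Finally, apply Lemma~\ref{lem:mazur} to $(g_n)$ on $A$ to obtain $\hat g$ with $\int_A\hat g\,dm\le\liminf\int_A g_n\,dm<\int_A g_f\,dm$ and $\int_\gamma\hat g\,ds\ge\liminf\int_\gamma g_n\,ds$. The latter inequality makes $\hat g$ a genuine $\ast$-upper gradient of $f$, which contradicts the minimality of $g_f$ on $A$. If the quantitative length bound in the obstacle step fails, I would instead let the weight on $E_n$ grow like $2^{n}$-scaled indicator averages, choosing $K$ depending on $\delta$, and recheck the measure accounting.
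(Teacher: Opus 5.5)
Your overall architecture (contradiction, a lower semicontinuous regularization $g\ge g_f$, blocking sets on a net, Lemma~\ref{lem:coverfrag}, Mazur, minimality) matches the paper's. However, the central quantitative step is wrong as stated.

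\textbf{The length in the blocking set.} Failure of connectivity does not force a fragment to spend length comparable to $2^{-n}$ in the blocking set. It only says that a fragment avoiding $E$ has gap $>\delta\cdot(\text{radius})$, so the most you can extract is $\int_{\gamma_I}\one_{E}\,ds\gtrsim\delta 2^{-n}$. The way to get even this is gap bookkeeping, not sub-annuli. First enlarge $E_p$ to an open set, using outer regularity and the strict inequality in the measure bound. Then delete $\gamma_I^{-1}(E_p)$ from $\dom(\gamma_I)$, keeping the endpoints; the domain stays compact because $E_p$ is open. For a unit-speed fragment, the gap of the new fragment equals $\gap(\gamma_I)+\int_{\gamma_I}\one_{E_p}\,ds$. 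Apply the blocking property one net level finer, at scale $n+3$: take $p\in N_{n+3}$ and cross from $B(p,2^{-n-3})$ to the complement of $B(p,2^{-n-2})$. Together with $\gap(\gamma_I)\le\delta2^{-n-4}$ from Lemma~\ref{lem:coverfrag}(3), this gives $\int_{\gamma_I}\one_{E_p}\,ds\ge\delta2^{-n-4}$. So the weight on $E$ must be of order $1/\delta$, and a $K$ depending only on $D$ and $\LIP[f]$ cannot work.

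\textbf{What the weight must dominate, and its cost.} The weight must dominate $\int_{\gamma_I}g\,ds$, not $|f(\gamma(b))-f(\gamma(a))|\le\LIP[f]2^{-n}$. Its measure cost must also be controlled by averages of $g$. Dominating $\LIP[f]2^{-n}$ requires $K\gtrsim\LIP[f]/(\delta\inf g)$. Since $g_f$ can be far below $\LIP[f]$ and $c'$ is tiny, the resulting cost is not small relative to $\int_A g_f\,dm$.

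Even with $K\simeq2^5/\delta$, your bound $\int_A Kg\one_{E_n}\,dm\le KM\,m(E_n\cap A)\approx M\,m(A')/2^4$ does not beat $(1-\eta)\int_A g_f\,dm$ unless $M/c$ is controlled. You cannot arrange that, because the $E_p$ are adversarial and may sit where $g$ is large. Also, for an arbitrary bounded $A$ the neighbourhood $A'$ is uncontrolled; the paper takes $A=B(x_0,r)$ with $m(\partial B(x_0,r))=0$.

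\textbf{The missing idea.} Put on $E_p$ the constant weight $\frac{2^6}{\delta}\inf_{B(p,2^{-n})}g$.
\begin{itemize}
\item Along curves, Lemma~\ref{lem:coverfrag}(4) gives $\int_{\gamma_I}g\,ds\le2^{1-n}\inf_{B(\gamma(a),2^{-n})}g$. The $E$-portion of $g_{n+3}$ contributes at least $2^{2-n}\inf_{B(p_I,2^{-n-3})}g$, which exceeds this.
\item In measure, $\frac{2^6}{\delta}m(E_p)\inf_{B(p,2^{-n})}g\le\frac{2^6D^2}{C_D}\int_{B(p,2^{-n-1})}g\,dm$, so the factors $\delta$ and $1/\delta$ cancel.
\item Summing over the disjoint balls gives $\int_{B(x_0,r)}g_n\,dm\le\frac{2^7D^2}{C_D}\int_{B(x_0,r)}g_f\,dm\le\frac12\int_{B(x_0,r)}g_f\,dm$. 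This is exactly where $C_D=2^9D^3$ is used.
\end{itemize}
With this choice you need neither the $\eta g$ term nor the claim that $g_n$ is an approximate $\ast$-upper gradient of $f$. It suffices that $\int_\gamma g\,ds\le\liminf_n\int_\gamma g_n\,ds$ for every fragment. The Mazur limit $h$ then satisfies $\int_\gamma h\,ds\ge\int_\gamma g\,ds$, and it is a $\ast$-upper gradient because $g$ is.
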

	
	\begin{proof}  
		For each $n\in\N$, let $N_n$ be a $2^{-n}$-net of $X$; that is, a maximal subset of $X$ such that any two distinct points $x,y\in N_n$ satisfy $d(x,y)\geq 2^{-n}$.
		
		Suppose, to the contrary, that there exist $\delta\in(0,1)$ and $r_0>0$ such that every annulus $B(x,2r)\setminus B(x,r)$, with $x\in X$ and $r\in(0,r_0)$, is not $(\delta,\delta/C_D)$-connected. We will obtain a contradiction by showing that $g_f=0$ $m$-a.e. By replacing the metric $d$ with $\alpha d$, where $\alpha:=2r_0^{-1}\in(0,\infty)$, we may assume that $r_0=2$. Note that this rescaling preserves the doubling constant $D$ of $m$, while the minimal $\ast$-upper gradient $g_f$ is replaced by $\alpha^{-1}g_f$. By \eqref{e:nonzero-gf}, $\Lip(f) \neq 0$.  Replacing $f$ by $\Lip(f)^{-1}f$, we may assume that $f$ is $1$-Lipschitz. Note that the rescaled function still satisfies \eqref{e:nonzero-gf}.
		By Lemma \ref{lem:astupperbounded}, and after possibly modifying $g_f$ on a set of measure zero, we may and will assume that $g_f\leq 1$.

		Since $(X,d)$ is separable, \eqref{e:nonzero-gf} implies that there exists $x_0 \in X$ such that $\int_{B(x_0,r)} g_f dm >0$ for all $r>0$.  Since $m$ is finite on balls, $m(\partial B(x_0,r))>0$ for at most countably many $r>0$. We may therefore choose $r\in(0,r_0)$ such that $m(\partial B(x_0,r))=0$.  Since $m$ is a Radon measure that is finite on all balls, $m$ is outer regular, see e.g. the proof in \cite[Theorem 1.10]{Mattila95}.  Since $X$ is separable and $m$ is outer regular, we may apply the Vitali--Carath\'eodory theorem \cite[p.~108, Chapter 4]{HKST} to the function $\restr{g_f}{\overline{B(x_0,r)}}$ on the metric measure space $(\overline{B(x_0,r)},m|_{\overline{B(x_0,r)}})$. Thus, after truncating and choosing $\epsilon>0$ sufficiently small, there exists a lower semicontinuous function $\widetilde{g}:\overline{B(x_0,r)}\to[\epsilon,1]$ such that $\widetilde{g}\geq g_f$ on $\overline{B(x_0,r)}$ and
		\[		\int_{B(x_0,r)}\widetilde{g}\,dm	<	\frac32\int_{B(x_0,r)}g_f\,dm.
		\]
		Define $g:X\to[\epsilon,1]$ by
		\[
		g(x):=
		\begin{cases}
			\widetilde{g}(x), & x\in\overline{B(x_0,r)},\\
			1, & x\in X\setminus\overline{B(x_0,r)}.
		\end{cases}
		\]
		Then $g$ is lower semicontinuous and satisfies $g_f\leq g$ everywhere. Since $m(\partial B(x_0,r))=0$, we have
		$\lim_{n \to \infty}\int_{B(x_0,r+2^{-n})}g\,dm=
		\int_{B(x_0,r)}g\,dm$.
		Consequently, for all sufficiently large $n$,
		\begin{equation}\label{eq:intbound}
			\int_{B(x_0,r+2^{-n})}g\,dm
			<
			2\int_{B(x_0,r)}g_f\,dm.
		\end{equation}
		
		For each $n \in \mathbb{N}$ and  $p\in N_n$,   the failure of the $(\delta,\delta/C_D)$-connectivity condition of the annulus $B(p,2^{1-n}) \setminus B(p,2^{-n})$   yields a set $E_p\subset B(p,2^{1-n})$ satisfying  
		\begin{equation} \label{e:cont1}
			m(E_p)/m(B(p,2^{1-n})) < \delta/C_D,
		\end{equation}
		such that, for every
		$\gamma\in \Gamma(B(p,2^{-n}), X\setminus B(p,2^{1-n}))$ satisfying $\gamma(\dom(\gamma)) \cap E_p \subset \{\gamma(\min(\dom(\gamma)))\}$, we have \begin{equation}  \label{e:cont2}
			\gap(\gamma)> \delta 2^{-n}. 
		\end{equation}  By outer regularity and the strict inequality in \eqref{e:cont1}, and after possibly enlarging $E_p$ slightly, we may moreover assume that $E_p$ is open and $E_p\subset B(p,2^{1-n})$. Note that the condition in \eqref{e:cont2} is preserved under such an enlargement.
		
		Define
		\[
		\psi_p(x):=\frac{2^6}{\delta}1_{E_p}(x),
		\qquad p\in N_n,
		\]
		and, for each $n\in\mathbb{N}$, let
		\[
		g_n(x):=
		\sum_{p\in N_n}
		\psi_p(x)\inf_{y\in B(p,2^{-n})}g(y).
		\]
		
		We claim that for each $\gamma \in \Gamma(X)$
		\begin{equation}\label{eq:intgoal}
			\int_\gamma g \,ds \leq \liminf_{n\to \infty} \int_\gamma g_n \,ds.
		\end{equation}
		We parametrize $\gamma$ by unit speed. Lemma \ref{lem:coverfrag} yields for every $n\in \N$ a collection  $\mathcal{I}_n$ of intervals in $\R$ with pairwise disjoint interiors such that
		\begin{enumerate}
			\item The endpoints of each $I\in \mathcal{I}_n$ lie in $\dom(\gamma)$.
			\item For each $I=[a,b]\in \mathcal{I}_n$, we have
			\[
			2^{-n-1}\leq d(\gamma(b),\gamma(a)) \leq |b-a|\leq 2^{-n}.
			\]
			\item For each $I\in \mathcal{I}_n$ we have $\gap(\gamma|_{I\cap \dom(\gamma)})\leq \delta2^{-n-4}$.
			\item For each $I=[a,b]\in \mathcal{I}_n$ 
			\[
			\int_{\gamma|_{I\cap \dom(\gamma)}} g \,ds \leq 2^{1-n}\inf_{y\in B(\gamma(a), 2^{-n})} g(y).
			\]
			\item 
			\[
			\lim_{n\to \infty}\lambda(\dom(\gamma) \setminus \bigcup_{I\in \mathcal{I}_n} I)=0
			\]
		\end{enumerate}
		
		For each $n \in \mathbb{N}$ and for each  $I=[a,b]\in \mathcal{I}_n$ let $p_I \in N_{n+3}$ be such that $\gamma(a)\in B(p_I,2^{-n-3})$. We slightly abbreviate the restrictions as follows $\gamma_I:=\gamma|_{\dom(\gamma)\cap I}$. 
		By condition (2), $\gamma_{I} \in \Gamma(B(p_I,2^{-n-3}),X\setminus B(p_I,2^{-n-2}))$ since $d(p_I,\gamma(a))<2^{-n-3}$ and $d(p_I,\gamma(b)) \ge d(\gamma(a),\gamma(b))-d(p_I,\gamma(a)) > 2^{-n-1}- 2^{-n-3} >2^{-n-2}$. We also have $B(p_I, 2^{-n-3}) \subset B(\gamma(a), 2^{-n})$ for every $I= [a,b]\in \mathcal{I}_n$, since $d(\gamma(a),p_I)<2^{-n-3}$. Thus, by condition (4) above
		\begin{equation}\label{eq:glowerboundint}
			2^{1-n} \inf_{y\in B(p_I, 2^{-n-3})} g(y) \geq  2^{1-n} \inf_{y\in B(\gamma(a), 2^{-n})} g(y) \geq  \int_{\gamma_I} g\,ds.
		\end{equation}

		We next show a lower bound for $\int_{\gamma_I}1_{E_{p_I}} ds$. Consider the curve fragment $\hat{\gamma}_I=\gamma|_{\{a,b\}\cup (\dom(\gamma_I)\setminus \gamma_I^{-1}(E_{p_I}))}$, whose domain is closed since $E_{p_I}$ is open.  We have $\hat{\gamma}_I(a)\in B(p_I,2^{-n-3})$ and  $d(\hat{\gamma}_I(b),p_I)\geq d(\hat{\gamma}_I(b),\hat{\gamma}_I(a))-d(\hat{\gamma}_I(a),p_I)\geq 2^{-n-2}$. Thus $\hat{\gamma}_I(b) \not\in B(p_I,2^{-n-2})$   and hence \[
\begin{gathered}
\hat{\gamma}_I\in\Gamma(B(p_I,2^{-n-3}),X\setminus B(p_I,2^{-n-2}))
\quad\text{and}\\
\hat{\gamma}_I(\dom(\hat{\gamma}_I))\cap E_{p_I}\subset\{\gamma(a)\}.
\end{gathered}
\] Therefore by \eqref{e:cont2} applied at scale $n+3$, we have 
		\begin{equation} \label{e:gap-hat}
			\gap(\hat{\gamma}_I)> \delta2^{-n-3}. 
		\end{equation}

	By the definition of $\widehat{\gamma}_I$, its domain is obtained
	from $\dom(\gamma_I)$ by removing
	$\gamma_I^{-1}(E_{p_I})$, apart from the two endpoints. Since
	$\gamma_I$ is parameterized by unit speed, it follows that
	\[
	\gap(\widehat{\gamma}_I)
	=
	\gap(\gamma_I)
	+
	\int_{\gamma_I}\one_{E_{p_I}}\,ds.
	\]
	Consequently, \eqref{e:gap-hat} and condition \textup{(3)} give
	\[
	\delta 2^{-n-3}
	<
	\gap(\widehat{\gamma}_I)
	\leq
	\delta 2^{-n-4}
	+
	\int_{\gamma_I}\one_{E_{p_I}}\,ds.
	\]
	Therefore,
	\begin{equation}\label{e:lineint}
		\delta 2^{-n-4}
		\leq
		\int_{\gamma_I}\one_{E_{p_I}}\,ds.
	\end{equation}
		Combining \eqref{e:lineint} with \eqref{eq:glowerboundint} we get
		\begin{align*}
			\int_{\gamma_I} g_{n+3}\, ds &\geq \frac{2^6}{\delta} \inf_{y\in B(p_I,2^{-n-3})} g(y)   \int_{\gamma_I} \one_{E_{p_I}} \,ds \\
			& \stackrel{\eqref{e:lineint}}{\geq} \inf_{y\in B(p_I, 2^{-n-3})} g(y) 2^{2-n} \stackrel{\eqref{eq:glowerboundint}}{\geq } \int_{\gamma_I} g \,ds.
		\end{align*}
		Summing this estimate over $I\in\mathcal{I}_n$, we obtain
		\begin{equation} \label{e:intgn-g}
			\sum_{I\in\mathcal{I}_n}\int_{\gamma_I} g_{n+3}\,ds
			\geq
			\sum_{I\in\mathcal{I}_n}\int_{\gamma_I} g\,ds.
		\end{equation}
		
		Since the intervals in $\mathcal{I}_n$ have pairwise disjoint interiors, and $\gamma$ is parametrized by unit speed,
		$$
		\sum_{I\in\mathcal{I}_n}\int_{\gamma_I} g\,ds
		=
		\int_{\dom(\gamma)\cap\bigcup_{I\in\mathcal{I}_n}I}
		g(\gamma(t))\,dt.
		$$
		Thus, since $0\leq g\leq 1$, condition (5) implies
		\begin{equation} \label{e:limitIn}
			0\leq
			\int_\gamma g\,ds-
			\sum_{I\in\mathcal{I}_n}\int_{\gamma_I}g\,ds
			\leq
			\lambda\left(
			\dom(\gamma)\setminus\bigcup_{I\in\mathcal{I}_n}I
			\right)
			\longrightarrow 0.
		\end{equation}
		Moreover, since $g_{n+3}\geq0$ and the intervals in $\mathcal{I}_n$ have pairwise disjoint interiors, we have
		$$
		\sum_{I\in\mathcal{I}_n}\int_{\gamma_I}g_{n+3}\,ds
		\leq
		\int_\gamma g_{n+3}\,ds.
		$$
		Consequently, we obtain \eqref{eq:intgoal} since
		\begin{align*}
			\int_\gamma g\,ds &\stackrel{\eqref{e:limitIn}}{=}	\lim_{n\to\infty}
			\sum_{I\in\mathcal{I}_n}\int_{\gamma_I}g\,ds
			\stackrel{\eqref{e:intgn-g}}{\leq}
			\liminf_{n\to\infty}
			\sum_{I\in\mathcal{I}_n}\int_{\gamma_I}g_{n+3}\,ds  \\
			&\leq
			\liminf_{n\to\infty}\int_\gamma g_{n+3}\,ds 
			=
			\liminf_{n\to\infty}\int_\gamma g_n\,ds.
		\end{align*}

		We next estimate the integral of the functions $g_n$. We have
		\begin{align}
			&\int_{B(x_0,r)} g_n\, dm \nonumber\\
&\leq \sum_{\stackrel{p\in N_n}{B(p, 2^{1-n})\cap B(x_0,r)\neq \emptyset}} \inf_{y\in B(p, 2^{-n})} g(y) \int \psi_p \,dm \nonumber \\
			&\leq \sum_{\stackrel{p\in N_n}{B(p, 2^{1-n})\cap B(x_0,r)\neq \emptyset}} \frac{2^6}{\delta}m(E_p) \inf_{y\in B(p, 2^{-n})} g(y)\nonumber \\
			&\stackrel{\eqref{e:cont1}}{\leq} \sum_{\stackrel{p\in N_n}{B(p, 2^{1-n})\cap B(x_0,r)\neq \emptyset}} \frac{2^6}{C_D}m(B(p,2^{1-n})) \inf_{y\in B(p, 2^{-n})} g(y)\nonumber \\
			&\leq \sum_{\stackrel{p\in N_n}{B(p, 2^{1-n})\cap B(x_0,r)\neq \emptyset}} \frac{2^6D^2}{C_D}m(B(p,2^{-1-n})) \inf_{y\in B(p, 2^{-n-1})} g(y)\nonumber \\
			&\leq \sum_{\stackrel{p\in N_n}{B(p, 2^{1-n})\cap B(x_0,r)\neq \emptyset}} \frac{D^2 2^6 }{C_D}\int_{B(p,2^{-n-1})}  g(y) \,dm\nonumber \\
			&\leq \frac{2^6D^2 }{C_D}\int_{B(x_0,r+2^{2-n})}  g(y)\, dm \stackrel{\eqref{eq:intbound}}{\leq} \frac{2^7D^2 }{C_D}\int_{B(x_0,r)}  g_f(y)\, dm\nonumber.
		\end{align}
		Here, the final estimate holds for all large enough $n$ and we used  that $D$ is the doubling constant of $m$, that the balls $B(p,2^{-n-1})$ are pairwise disjoint and $m(\partial B(x_0,r))=0$. 
		Thus, by our choice of $C_D$ we get for all large enough $n$
		\begin{equation}\label{eq:intboundgn}
			\int_{B(x_0,r)} g_n dm \leq \frac{1}{2} \int_{B(x_0,r)} g_f dm.
		\end{equation}
		
	Let $h$ be the function provided by Lemma~\ref{lem:mazur} with
	$A=B(x_0,r)$. By \eqref{eq:intgoal} and the defining property of $h$,
	\[
	\int_\gamma h\,ds
	\geq
	\liminf_{n\to\infty}\int_\gamma g_n\,ds
	\geq
	\int_\gamma g\,ds
	\]
	for every $\gamma\in\Gamma(X)$. Since $g$ is a $\ast$-upper gradient
	of $f$, it follows that $h$ is also a $\ast$-upper gradient of $f$.
	Moreover, Lemma~\ref{lem:mazur} and \eqref{eq:intboundgn} give
	\[
	\int_{B(x_0,r)}h\,dm
	\leq
	\frac12\int_{B(x_0,r)}g_f\,dm.
	\]
	On the other hand, the minimality of $g_f$ implies that
	$g_f\leq h$ $m$-almost everywhere. This contradicts
	\[
	\int_{B(x_0,r)}g_f\,dm>0.
	\]
	\end{proof}

	The following is a  weak-type inequality for tthe maximal function.
	\begin{lemma}\label{lem:weak-maximal}
		Let $(X,d)$ be a complete separable metric space and let $m$ be a
		doubling measure with doubling constant $D$. Let $\nu$ be a positive
		Radon measure on $X$. Fix $x\in X$, $r>0$, and $L>0$, and define
		\[
		E:=\{y\in B(x,r):M_r\nu(y)\geq L\},
		\]
		where
		\[
		M_r\nu(y):=\sup_{s\in(0,r)}
		\frac{\nu(B(y,s))}{m(B(y,s))}.
		\]
		Then $E$ is Borel and
		\[
		m(E)\leq \frac{D^3}{L}\nu(B(x,2r)).
		\]
	\end{lemma}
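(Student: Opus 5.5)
The plan is a standard Vitali-type covering argument. We handle measurability first and then prove the weak-type bound.

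\emph{Measurability.} For each fixed $s>0$, the map $y\mapsto \nu(B(y,s))$ is lower semicontinuous, because open balls vary lower semicontinuously. The map $y\mapsto m(B(y,s))$ is lower semicontinuous for the same reason, but a ratio of two lower semicontinuous functions need not be semicontinuous, so we cannot conclude directly. To get around this, write $\{M_r\nu\geq L\}=\bigcap_{k}\{M_r\nu> L-1/k\}$. It then suffices to check that $\{y: \nu(B(y,s))>c\, m(B(y,s))\}$ is a union, over $s\in(0,r)$, of sets that are Borel and whose union can be taken countable. We obtain this by approximating $s$ from below by rational radii, using $\nu(B(y,s))=\lim_{q\uparrow s}\nu(B(y,q))$ and the corresponding limit for $m$. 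Inner continuity of measures along increasing open balls gives, for each rational $q$, the sets $\{\nu(B(y,q))>c\,m(B(y,q))\}$. Each of these is Borel, since both $y\mapsto \nu(B(y,q))$ and $y\mapsto m(B(y,q))$ are lower semicontinuous, hence Borel. So $\{M_r\nu>c\}$ is a countable union of Borel sets, and $E$ is Borel.

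\emph{Weak-type bound.} Fix a compact set $K\subset E$; by inner regularity of $m$ it suffices to bound $m(K)$. Also fix $\theta<1$. For each $y\in K$ choose $s_y\in(0,r)$ with $\nu(B(y,s_y))\geq \theta L\, m(B(y,s_y))$.
\begin{enumerate}[(i)]
\item Apply the $5r$-covering lemma (or its basic $3r$ version, via compactness and a finite subcover) to the balls $B(y,s_y)$. This gives finitely many disjoint balls $B_j=B(y_j,s_j)$ such that $K\subset\bigcup_j B(y_j,3s_j)$.
\item Each $B_j$ has center in $B(x,r)$ and radius $s_j<r$, so $B_j\subset B(x,2r)$.
\item Estimate
\[
m(K)\leq \sum_j m(B(y_j,3s_j))\leq D^2\sum_j m(B_j)\leq \frac{D^2}{\theta L}\sum_j\nu(B_j)\leq \frac{D^2}{\theta L}\nu(B(x,2r)).
\]
\end{enumerate}
Let $\theta\to1$ and take the supremum over compact $K$. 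The constant $D^3$ in the statement leaves an extra factor of $D$ of room, which covers the case where one uses the non-strict $5r$ lemma, or enlarges by a factor $4$ in order to handle a closed-ball technicality.

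\emph{Main obstacle.} The main obstacle is the measurability step, specifically making the approximation over rational radii rigorous for the ratio. The covering estimate itself is routine.
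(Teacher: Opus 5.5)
Your proof is correct, but it takes a different route from the paper's in both steps.

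\textbf{Measurability.} The paper proves the stronger fact that $M_r\nu$ itself is lower semicontinuous. Given $y_i\to y$ and a radius $s$ with $\nu(B(y,s))>a\,m(B(y,s))$, it uses the shrunken balls $B(y_i,s-d(y_i,y))\subset B(y,s)$. Their indicators converge pointwise to $1_{B(y,s)}$, so dominated convergence transfers the strict inequality to $y_i$. Hence $\{M_r\nu>a\}$ is open, and $E$ is a $G_\delta$ set intersected with a ball. You instead reduce to rational radii via continuity from below along $B(y,q)\uparrow B(y,s)$. You then use lower semicontinuity of $y\mapsto\nu(B(y,q))$ and $y\mapsto m(B(y,q))$ separately. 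This avoids the ratio issue you flagged and gives Borel measurability, which is all the lemma needs.

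\textbf{Estimate.} The paper applies the infinite $5B$-covering lemma directly to $E$, with countability of the disjoint subfamily coming from separability. This costs $m(5B_i)\le m(8B_i)\le D^3m(B_i)$. You first pass to a compact $K\subset E$ by inner regularity, which is valid since $m|_{B(x,r)}$ is a finite Borel measure on a Polish space. You then take a finite subcover and apply the greedy $3r$-lemma. That needs only finitely many choices and gives the sharper constant $D^2\le D^3$ (using $D\ge1$).

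\textbf{Trade-off.} Your route needs inner regularity and a limit over $K$ and $\theta$. The paper's works on $E$ itself, at the price of the infinite covering lemma and a worse constant.
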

	
	\begin{proof}
		We first note that $M_r\nu$ is lower semicontinuous. Let $y_i\to y$
		and suppose that $M_r\nu(y)>a$. Choose $s\in(0,r)$ such that
		\[
		\frac{\nu(B(y,s))}{m(B(y,s))}>a.
		\]
		Set $s_i:=s-d(y_i,y)$. For all sufficiently large $i$, we have
		$s_i\in(0,r)$ and
		\[
		B(y_i,s_i)\subset B(y,s).
		\]
		Moreover,
		\[
		1_{B(y_i,s_i)}\longrightarrow 1_{B(y,s)}
		\]
		pointwise. Hence, by dominated convergence,
		\[
		\nu(B(y_i,s_i))\longrightarrow \nu(B(y,s))
		\qquad\text{and}\qquad
		m(B(y_i,s_i))\longrightarrow m(B(y,s)).
		\]
		Therefore, for all sufficiently large $i$,
		\[
		M_r\nu(y_i)
		\geq
		\frac{\nu(B(y_i,s_i))}{m(B(y_i,s_i))}
		>a.
		\]
		Thus $M_r\nu$ is lower semicontinuous, and in particular $E$ is
		Borel.
		
		Fix $L'\in(0,L)$. For every $y\in E$, since $M_r\nu(y)\geq L>L'$,
		there exists $s_y\in(0,r)$ such that
		\[
		\nu(B(y,s_y))>L'm(B(y,s_y)).
		\]
		Apply the $5B$-covering lemma \cite[Theorem 1.2]{Hei} to the family
		\[
		\mathcal B:=\{B(y,s_y):y\in E\}.
		\]
		There exists a pairwise disjoint subcollection $\{B_i\}_{i\in I}$
		such that
		\[
		E\subset\bigcup_{i\in I}5B_i.
		\]
		Since $X$ is separable, the family $\{B_i\}_{i\in I}$ is at most
		countable. By volume doubling,
		\[
		m(5B_i)\leq m(8B_i)\leq D^3m(B_i),
		\]
		and hence
		\[
		m(E)
		\leq
		\sum_im(5B_i)
		\leq
		D^3\sum_im(B_i)
		<
		\frac{D^3}{L'}\sum_i\nu(B_i).
		\]
		Since the balls $B_i$ are pairwise disjoint and each $B_i$ is
		contained in $B(x,2r)$,
		\[
		\sum_i\nu(B_i)\leq\nu(B(x,2r)).
		\]
		Thus
		\[
		m(E)
		\leq
		\frac{D^3}{L'}\nu(B(x,2r)).
		\]
		Letting $L'\uparrow L$ gives
		\[
		m(E)\leq\frac{D^3}{L}\nu(B(x,2r)).
		\]
	\end{proof}

	\begin{lemma} \label{l:sub-gauss}
		Let $\Psi:[0,\infty) \to [0,\infty)$ be a scale function such that $$\lim_{\epsilon \to 0}\limsup_{r \downarrow 0} \frac{\Psi(\epsilon r)}{\Psi(r)\epsilon^2} = 0.$$ 
		Then $\lim_{r \downarrow 0} \frac{\Psi(r)}{r^2}=0$.
	\end{lemma}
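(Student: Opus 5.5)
The plan is an iteration argument. Fix $\eta\in(0,1)$ and use the hypothesis to choose $\epsilon\in(0,1)$ and $r_1>0$ such that
\[
\Psi(\epsilon r)\leq \eta\,\epsilon^2\,\Psi(r)
\qquad\text{for all } 0<r\leq r_1 .
\]
Applying this $k$ times from a fixed base radius $r_1$ gives
\[
\Psi(\epsilon^k r_1)\leq \eta^k\epsilon^{2k}\Psi(r_1).
\]
Dividing by $(\epsilon^k r_1)^2$ yields
\[
\frac{\Psi(\epsilon^k r_1)}{(\epsilon^k r_1)^2}\leq \eta^k\,\frac{\Psi(r_1)}{r_1^2}.
\]
With $\eta<1$, the right-hand side tends to $0$ geometrically along the sequence $r=\epsilon^k r_1$.

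The next step passes from this geometric sequence to all small $r$. Given $r\in(0,r_1]$, choose $k$ with $\epsilon^{k+1}r_1< r\leq \epsilon^k r_1$. Monotonicity of $\Psi$ gives $\Psi(r)\leq \Psi(\epsilon^k r_1)$. Together with $r^2>\epsilon^{2k+2}r_1^2$, this gives
\[
\frac{\Psi(r)}{r^2}\leq \epsilon^{-2}\,\frac{\Psi(\epsilon^k r_1)}{(\epsilon^k r_1)^2}\leq \epsilon^{-2}\eta^k\,\frac{\Psi(r_1)}{r_1^2}.
\]
Here $\epsilon$ and $r_1$ are fixed once $\eta$ is chosen, and $k\to\infty$ as $r\downarrow0$. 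Hence the right-hand side tends to $0$, so $\lim_{r\downarrow0}\Psi(r)/r^2=0$. The scaling bounds on $\Psi$ are not even needed; only monotonicity, $\Psi(r_1)<\infty$, and the hypothesis are used.

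The only delicate point is extracting a uniform inequality from the iterated limit in the hypothesis. Since $\lim_{\epsilon\to0}\limsup_{r\downarrow0}\Psi(\epsilon r)/(\Psi(r)\epsilon^2)=0$, some fixed $\epsilon$ has $\limsup<\eta$. By the definition of $\limsup$, there is then $r_1>0$ with the ratio below $\eta$ for all $r\in(0,r_1]$.

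One also checks that every iterate stays in the admissible range. This holds because $\epsilon^k r_1\leq r_1$, so each step applies the inequality at a radius in $(0,r_1]$. Beyond this, the argument is a routine geometric-series estimate, and I expect no real obstacle.
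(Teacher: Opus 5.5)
Your proof is correct and follows essentially the same route as the paper. The paper sets $h(r)=\Psi(r)/r^2$ and fixes $\epsilon$ and $r_0$ with $h(\epsilon r)\le\frac12 h(r)$ (your argument with $\eta=\frac12$), then iterates along $\epsilon^n r_0$ and interpolates between consecutive scales. You are right that monotonicity of $\Psi$ alone suffices in the interpolation step, even though the paper appeals to the scale-function property there.
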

	\begin{proof}
		Set
		\[
		h(r):=\frac{\Psi(r)}{r^2}.
		\]
		By assumption, we may choose $\epsilon\in(0,1)$ and $r_0>0$ such that
		\[
		h(\epsilon r)\leq \frac12 h(r)
		\]
		for all $r\in(0,r_0)$. Iterating, for every $n\in\mathbb{N}$,
		\[
		h(\epsilon^n r_0)\leq 2^{-n}h(r_0).
		\]
		
		Now let $r\in(0,r_0)$. Choose $n\in\mathbb{N}$ such that
		\[
		\epsilon^{n+1}r_0<r\leq \epsilon^n r_0.
		\]
		Since $\Psi$ is a scale function,
		\[
		\frac{\Psi(r)}{r^2}
		\lesssim
		\frac{\Psi(\epsilon^n r_0)}{\epsilon^{2n+2}r_0^2}
		=
		\epsilon^{-2}h(\epsilon^n r_0)
		\leq
		\epsilon^{-2}2^{-n}h(r_0).
		\]
		As $r\downarrow0$, we have $n\to\infty$, and hence
		\[
		\lim_{r\downarrow0}\frac{\Psi(r)}{r^2}=0.
		\]
	\end{proof}
	
	\begin{theorem} \label{t:zero-ug}
	 Assume the hypotheses of Theorem~\ref{t:diff-space}.
	 Then every $f\in\LIP(X)$ has minimal $\ast$-upper gradient
	 $g_f=0$ $m$-almost everywhere.
	\end{theorem}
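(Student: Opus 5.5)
We argue by contradiction using Lemma~\ref{l:zero-ug}. Suppose $f\in\LIP(X)$ has $m(\{g_f>0\})>0$. Then for every $\delta\in(0,1)$ and $r_0>0$ there are $x\in X$ and $r\in(0,r_0)$ such that $B(x,2r)\setminus B(x,r)$ is $(\delta,\delta/C_D)$-connected. We will show that, for suitable small $\delta$ and all sufficiently small $r$, no such annulus can be $(\delta,\delta/C_D)$-connected. The obstruction comes from a cutoff function supplied by ${\rm Cap}(\Psi)_{\leq}$, which is nearly constant along most curve fragments of the annulus but must jump by $1$ across it.

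\textbf{Step 1: the test function.} Fix $x$ and $r$, and let $\epsilon\in(0,1)$ be a small parameter. Choose finitely many concentric shells inside the annulus and build a function that rises from $0$ to $1$ across it. The cleanest version is the following. Apply ${\rm Cap}(\Psi)_{\leq}$ at radius $\epsilon r$ around each point of a maximal $\epsilon r$-net of a thin shell. Alternatively, take a single cutoff $u$ with $u=1$ on $B(x,r)$, $u=0$ off $B(x,2r)$, and $\mathcal{E}(u,u)\lesssim m(B(x,r))/\Psi(r)$, where we use that the enlargement factor $A$ is immaterial. Denote by $\Gamma(u,u)$ its energy measure, supported in $B(x,2r)$.

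\textbf{Step 2: the bad set.} Let
\[
E:=\{y\in B(x,2r):M_{s}\Gamma(u,u)(y)\geq L\}\cup\{\text{non-Lebesgue points}\},
\]
with $s$ comparable to $r$ and $L$ to be chosen. By Lemma~\ref{lem:weak-maximal},
\[
m(E)\lesssim D^3\,\frac{m(B(x,r))}{L\,\Psi(r)}.
\]
Choosing $L=K/\Psi(r)$ with $K$ large depending on $\delta$ and the doubling data gives $m(E)<(\delta/C_D)\,m(B(x,2r))$. By connectivity, there is then a fragment $\gamma$ from $B(x,r)$ to $X\setminus B(x,2r)$ with $\gap(\gamma)\leq\delta r$ that avoids $E$ except at its starting point. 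Since $u$ is continuous, or at least has Lebesgue points off $E$, one may shrink $\gamma$ slightly so that it starts at a point where $u=1$ and ends where $u=0$.

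\textbf{Step 3: contradiction along $\gamma$.} Along $\gamma$, each point $y$ satisfies $M_{s}\Gamma(u,u)(y)\leq K/\Psi(r)$. Subdivide $\dom(\gamma)$ into consecutive points $t_0<\dots<t_N$ with $d(\gamma(t_i),\gamma(t_{i+1}))\leq\epsilon r$. We can take $N\lesssim(\len(\gamma))/(\epsilon r)$. The length may be large, so instead first restrict $\gamma$ to a subfragment of length $\lesssim r$ by cutting it at its first exit time; the gap sum still controls jumps. Gaps exceeding $\epsilon r$ contribute only $\lesssim\delta/\epsilon$ of them. Inside each gap, and between consecutive points, apply Lemma~\ref{lem:PIptwise}. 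Consecutive points at distance $\leq\epsilon r$ give
\[
|u(\gamma(t_i))-u(\gamma(t_{i+1}))|\lesssim\Psi(\epsilon r)^{1/2}\,(K/\Psi(r))^{1/2}.
\]
Each large gap of size $\ell\leq\delta r$ gives a jump $\lesssim\Psi(\ell)^{1/2}(K/\Psi(r))^{1/2}\lesssim\delta^{\beta_1/2}K^{1/2}$ after using the scaling of $\Psi$. Summing over the $\lesssim1/\epsilon$ short steps yields
\[
1\leq|u(\text{start})-u(\text{end})|\lesssim K^{1/2}\left(\frac{\Psi(\epsilon r)}{\epsilon^2\Psi(r)}\right)^{1/2}+\text{gap contribution}.
\]
We then choose parameters in order. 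First fix $\delta$. This fixes $K=K(\delta)$ and makes the gap contribution small; this needs care, since $K$ grows as $\delta$ shrinks. Then use the hypothesis $\lim_{\epsilon\to0}\limsup_{r\downarrow 0}\Psi(\epsilon r)/(\epsilon^2\Psi(r))=0$ to pick $\epsilon$ and $r_0$ making the first term $<1/2$. This contradicts Lemma~\ref{l:zero-ug}. Lemma~\ref{l:sub-gauss} may be used to handle residual terms of the form $\Psi(r)/r^2$, for instance if the fragment's arclength $\int_\gamma 1\,ds$ must be compared with $r$.

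\textbf{Main obstacle.} The hardest step is balancing the gap contribution against $K(\delta)$. Since $L\sim 1/(\delta\Psi(r))$, the jump across a gap of length $\delta r$ is of order $(\Psi(\delta r)/(\delta\Psi(r)))^{1/2}$. This is small only because $\Psi(\delta r)/\Psi(r)\lesssim\delta^{\beta_1}$, and it still needs the total number of gaps to be controlled. I would handle it by grouping the gaps dyadically by length and using the sublinear power $\beta_1/2$ together with $\sum\ell_i\leq\delta r$. If this is insufficient, I would instead take $\delta=\epsilon^{2}$, so that the super-quadratic decay from the hypothesis absorbs both terms.
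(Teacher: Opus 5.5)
Your setup matches the paper's: contradiction via Lemma~\ref{l:zero-ug}, a capacity cutoff, a bad set $\{M_{2r}\Gamma(u,u)\geq\Lambda/\Psi(r)\}$ with $\Lambda\simeq\delta^{-1}$ controlled by Lemma~\ref{lem:weak-maximal}, and Lemma~\ref{lem:PIptwise} applied along a fragment that avoids this set. Step~3, however, does not close, for two reasons.

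First, the count $N\lesssim 1/\epsilon$ is unjustified. $(\delta,\delta/C_D)$-connectivity controls only $\gap(\gamma)$, not $\int_\gamma 1\,ds$. Cutting $\gamma$ at its first exit from $B(x,2r)$ does not bound its length either, since a fragment may wind around inside the ball for arbitrarily long. Your order of choices fixes $\epsilon$ before $\gamma$, so the short-step sum $N K^{1/2}(\Psi(\epsilon r)/\Psi(r))^{1/2}$ is uncontrolled.

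Second, the gap term fails. The scale-function bound $\Psi(\ell)/\Psi(r)\lesssim(\ell/r)^{\beta_1}$ holds with only $\beta_1>1$, so each gap costs $K^{1/2}(\ell/r)^{\beta_1/2}$. This is possibly a sublinear power of $\ell$, and it cannot be summed over arbitrarily many gaps knowing only $\sum_i\ell_i\leq\delta r$. Dyadic grouping does not change this. Your large-gap count gives $(\delta/\epsilon)\delta^{\beta_1/2}K^{1/2}\simeq\delta^{(1+\beta_1)/2}\epsilon^{-1}$, which blows up as $\epsilon\to0$ with $\delta$ fixed. The fallback $\delta=\epsilon^2$ would need $\Psi(\epsilon r)=o(\epsilon^4\Psi(r))$ even if $N\lesssim1/\epsilon$ held, and that is not available.

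The paper closes the argument with three ideas you are missing.

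(i) You need a quadratic comparison $\Psi(s)\leq C(s/r)^2\Psi(r)$ for $0<s\leq r$ small, with $C$ independent of $\delta$. The paper takes it from the quasi-monotonicity of $\Psi(s)/s^2$ (Murugan's chain-condition result). It also follows by iterating $\Psi(\epsilon_0\rho)\leq\epsilon_0^2\Psi(\rho)$ and using that $\Psi$ is increasing. Combined with Lemma~\ref{lem:PIptwise}, it gives a Lipschitz-type bound at points of the fragment off $E$: $|u(\gamma(t))-u(\gamma(t'))|\leq C\Lambda^{1/2}d(\gamma(t),\gamma(t'))/r$ whenever this distance is at most $\delta r$. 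Hence all gaps together cost at most $C\Lambda^{1/2}\delta\simeq C\delta^{1/2}$, however many there are. This is exactly the balance between $K(\delta)$ and the gaps that you were looking for.

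(ii) Lemma~\ref{l:sub-gauss} gives $\Psi(s)^{1/2}/s\to0$. So $u\circ\gamma$ is Lipschitz on $\dom(\gamma)$ with derivative zero almost everywhere. Therefore $|u(\gamma(\max\dom(\gamma)))-u(\gamma(\min\dom(\gamma)))|$ is bounded by the sum of the gap jumps alone, and the length of $\gamma$ never enters. Equivalently, fix $\gamma$ first and let the mesh tend to zero, using $\len(\gamma)<\infty$; no auxiliary scale $\epsilon$ is needed.

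(iii) The starting point of $\gamma$ may lie in $E$, where Lemma~\ref{lem:PIptwise} gives no useful bound. So take the cutoff equal to $1$ on $B(x,5r/4)$ and $0$ off $B(x,3r/2)$. Since $\gap(\gamma)\leq\delta r<r/4$, the gaps adjacent to the endpoints then contribute nothing, which replaces your ``shrinking'' step.

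Choosing $\delta$ with $C\delta^{1/2}\leq 1/2$ then contradicts the unit drop of $u$ along $\gamma$.
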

	
	\begin{proof}
		Let $D$ denote the doubling constant of $m$, and let $A,C_{PI}\geq1$ be the constants in Lemma \ref{lem:PIptwise}. Let $C_D>0$ be the constant in Lemma \ref{l:zero-ug}.
		
		By \cite[Corollary 1.10]{Mur-chain}, there exists $C_\Psi\geq1$ such that
		\begin{equation}\label{e:psi-quasi}
			\frac{\Psi(s)}{s^2}
			\leq
			C_\Psi\frac{\Psi(R)}{R^2},
			\qquad 0<s\leq R \le \diam(X,d).
		\end{equation}
		By ${\rm Cap}(\Psi)_{\le}$ and regularity, after changing the structural constant if necessary, there exists $C_U>0$ such that for every $p\in X$ and $r>0$ there exists $\phi\in\mathcal{F} \cap C(X)$, with $0\leq\phi\leq1$, satisfying
		\[
		\phi=1\quad\text{on }B(p,5r/4),\qquad
		\phi=0\quad\text{on }X\setminus B(p,3r/2),
		\]
		and
		\begin{equation}\label{e:cap-cutoff}
			\mathcal{E}(\phi,\phi)
			\leq
			C_U\frac{m(B(p,r))}{\Psi(r)}.
		\end{equation}
		Choose $\delta\in(0,1)$ sufficiently small that
		\begin{equation}\label{e:delta-small}
			\delta<\min\left\{\frac14,\frac{2}{A}\right\},
			\qquad
			C_{\rm PI}
			\left(2C_UC_DD^3C_\Psi\delta\right)^{1/2}
			\leq\frac12.
		\end{equation}
		Using the hypothesis on $\Psi$, we may decrease $\delta$ further so that
		\[
		\limsup_{r\downarrow0}
		\frac{\Psi(\delta r)}{\Psi(r)\delta^2}<1.
		\]
		Hence there exists $r_0>0$ such that
		\begin{equation}\label{e:psi-ub}
			\frac{\Psi(\delta r)}{\Psi(r)\delta^2}
			\leq1
		\end{equation}
		for all $r\in(0,r_0)$.
		
		Suppose, to the contrary, that there exists $f\in\LIP(X)$ such that $g_f\neq0$ on a set of positive measure. By Lemma \ref{l:zero-ug}, there exist $p\in X$ and $r\in(0,r_0)$ such that the annulus
		\[
		B(p,2r)\setminus B(p,r)
		\]
		is $(\delta,\delta/C_D)$-connected.	Let $\phi\in\mathcal{F}$ be the cutoff from \eqref{e:cap-cutoff}, represented by its precise representative, and set
		\[
		\Lambda:=2C_UC_DD^3\delta^{-1}.
		\]
		Define
		\[
		E:=
		\left\{
		x\in B(p,2r):
		M_{2r}\Gamma(\phi,\phi)(x)
		\geq\frac{\Lambda}{\Psi(r)}
		\right\}.
		\]
		By Lemma \ref{lem:weak-maximal}, $E$ is Borel and
		\begin{align*}
			\frac{m(E)}{m(B(p,2r))}
			&\leq
			\frac{D^3\Psi(r)}{\Lambda}
			\frac{\Gamma(\phi,\phi)(B(p,4r))}
			{m(B(p,2r))}\\
			&\leq
			\frac{D^3\Psi(r)}{\Lambda}
			\frac{\mathcal{E}(\phi,\phi)}
			{m(B(p,2r))}\\
			&\leq
			\frac{C_UD^3}{\Lambda}
			\leq
			\frac{\delta}{2C_D}.
		\end{align*}
		In particular,
		\[
		\frac{m(E)}{m(B(p,2r))}
		<
		\frac{\delta}{C_D}.
		\]
		
		By the $(\delta,\delta/C_D)$-connectivity of the annulus, there exists a curve fragment
		\[
		\gamma\in\Gamma(B(p,r),X\setminus B(p,2r))
		\]
		such that
		\[
		\gap(\gamma)\leq\delta r
		\]
		and
		\[
		\gamma(\dom(\gamma))\cap E
		\subset
		\{\gamma(\min(\dom(\gamma)))\}.
		\]
		By truncating $\gamma$ at its first exit from $B(p,2r)$, if necessary, we may assume that
		\[
		\gamma(t)\in B(p,2r)
		\]
		for every $t\in\dom(\gamma)\setminus\{\max(\dom(\gamma))\}$.
		We parametrize $\gamma$ by unit speed.
		Thus, for every
		$t\in\dom(\gamma)\setminus
		\{\min(\dom(\gamma)),\max(\dom(\gamma))\}$,
		\[
		M_{2r}\Gamma(\phi,\phi)(\gamma(t))
		<
		\frac{\Lambda}{\Psi(r)}.
		\]
		In particular, Lemma \ref{lem:PIptwise} implies that every such $\gamma(t)$ is a Lebesgue point of $\phi$.
		
		Let $t,t'$ be interior points of $\dom(\gamma)$ such that
		\[
		d(\gamma(t),\gamma(t'))\leq\delta r.
		\]
		Since $A\delta r<2r$, Lemma \ref{lem:PIptwise} gives, after increasing $C_{PI}$ by a fixed factor if necessary,
		\begin{equation}\label{e:phi-two-point}
			|\phi(\gamma(t))-\phi(\gamma(t'))|
			\leq
			C_{PI}\Lambda^{1/2}
			\frac{\Psi(d(\gamma(t),\gamma(t')))^{1/2}}
			{\Psi(r)^{1/2}}.
		\end{equation}
		If $s\leq\delta r$, then by \eqref{e:psi-quasi} and \eqref{e:psi-ub},
		\begin{align}
			\frac{\Psi(s)}{\Psi(r)}
			&\leq
			C_\Psi
			\frac{s^2}{\delta^2r^2}
			\frac{\Psi(\delta r)}{\Psi(r)}
			\leq
			C_\Psi\frac{s^2}{r^2}.
			\label{e:psi-linear}
		\end{align}
		Hence \eqref{e:phi-two-point} yields
		\[
		|\phi(\gamma(t))-\phi(\gamma(t'))|
		\leq
		C_{PI}(C_\Psi\Lambda)^{1/2}
		\frac{d(\gamma(t),\gamma(t'))}{r}.
		\]
		Together with $0\leq\phi\leq1$ and the fact that $\phi\circ\gamma$ is constant in a neighbourhood of each endpoint of $\dom(\gamma)$, this shows that $\phi\circ\gamma$ is Lipschitz.
		
		Moreover, for every interior $t\in\dom(\gamma)$, letting $t'\to t$ through $\dom(\gamma)$ in \eqref{e:phi-two-point} gives
		\[
		(\phi\circ\gamma)'(t)=0
		\]
		at almost every such $t$, since Lemma \ref{l:sub-gauss} gives
		\[
		\lim_{s\downarrow0}\frac{\Psi(s)^{1/2}}{s}=0.
		\]
		
		Let $\{(a_i,b_i):i\in I\}$ denote the gaps of $\gamma$. Since $\phi\circ\gamma$ is Lipschitz and its derivative vanishes almost everywhere,
		\begin{equation}\label{e:gap-decomp}
			|\phi(\gamma(\max(\dom(\gamma))))
			-\phi(\gamma(\min(\dom(\gamma))))|
			\leq
			\sum_{i\in I}
			|\phi(\gamma(b_i))-\phi(\gamma(a_i))|.
		\end{equation}
		
		If $a_i=\min(\dom(\gamma))$, then
		\[
		d(\gamma(a_i),\gamma(b_i))
		\leq\gap(\gamma)\leq\delta r<r/4.
		\]
		Since $\gamma(a_i)\in B(p,r)$, both endpoints of this gap lie in
		$B(p,5r/4)$, and hence
		\[
		\phi(\gamma(a_i))=\phi(\gamma(b_i))=1.
		\]
		Similarly, if $b_i=\max(\dom(\gamma))$, then
		\[
		d(p,\gamma(a_i))
		\geq
		2r-\delta r>\frac32r,
		\]
		so
		\[
		\phi(\gamma(a_i))=\phi(\gamma(b_i))=0.
		\]
		Thus gaps incident to the endpoints contribute nothing to
		\eqref{e:gap-decomp}.
		
		For every remaining gap, both $\gamma(a_i)$ and $\gamma(b_i)$ avoid $E$, and
		\[
		s_i:=d(\gamma(a_i),\gamma(b_i))
		\leq\gap(\gamma)\leq\delta r.
		\]
		Hence, by \eqref{e:phi-two-point} and \eqref{e:psi-linear},
		\[
		|\phi(\gamma(b_i))-\phi(\gamma(a_i))|
		\leq
		C_{PI}(C_\Psi\Lambda)^{1/2}\frac{s_i}{r}.
		\]
		Summing over the gaps and using
		\[
		\sum_{i\in I}s_i=\gap(\gamma)\leq\delta r,
		\]
		we obtain
		\begin{align*}
			|\phi(\gamma(\max(\dom(\gamma))))
			-\phi(\gamma(\min(\dom(\gamma))))|
			&\leq
			C_{PI}(C_\Psi\Lambda)^{1/2}\delta\\
			&=
			C_{PI}
			\left(2C_UC_DD^3C_\Psi\delta\right)^{1/2}\\
			&\leq\frac12,
		\end{align*}
		where the last inequality follows from \eqref{e:delta-small}.
		On the other hand,
		$\phi(\gamma(\min(\dom(\gamma))))=1$ and $\phi(\gamma(\max(\dom(\gamma))))=0$, 
		so the left hand side above equals $1$, a contradiction. Therefore
		$g_f=0$ $m$-almost everywhere for every $f\in\LIP(X)$.
	\end{proof}

		  The proof of Theorem \ref{t:diff-space} is based on the vanishing of minimal $\ast$-upper gradients obtained from the above contradiction argument. We note, that roughly dual to the notion of minimal $\ast$-upper gradients is the notion of Alberti representations from \cite{Bate}, and a proof of the following theorem could likely be obtained by using the blow-up tools of \cite{cks} with the characterization of differentiability spaces using Alberti representations in \cite{Bate}. This duality is made precise in \cite{bate2024fragment}. This alternative argument seems to take considerably more effort than the proof presented in this section, and is not pursued further. 
	
	\begin{proof}[Proof of Theorem \ref{t:diff-space}]
		Suppose, to the contrary, that $(X,d,m)$ is a Lipschitz
		differentiability space. By \cite[Theorem 1.7]{bate2024fragment}, for
		every $f\in\LIP(X)$ the minimal $\ast$-upper gradient satisfies
		\[
		g_f=\Lip f
		\qquad m\text{-almost everywhere},
		\]
		where $\Lip f$ denotes the pointwise upper Lipschitz constant of $f$.
		By Theorem \ref{t:zero-ug}, $g_f=0$ $m$-almost everywhere for every
		$f\in\LIP(X)$. Hence
		\[
		\Lip f=0
		\qquad m\text{-almost everywhere}
		\]
		for every $f\in\LIP(X)$.
		
		Let $(U,\varphi)$ be a Lipschitz differentiability chart of dimension
		$n\geq1$, where
		\[
		\varphi=(\varphi^1,\ldots,\varphi^n).
		\]
		Fix $j\in\{1,\ldots,n\}$. Since
		\[
		\varphi^j(y)-\varphi^j(x)
		=
		e_j^*(\varphi(y)-\varphi(x)),
		\]
		the coordinate functional $e_j^*$ is a differential of $\varphi^j$ at
		every $x\in U$. On the other hand, $\Lip\varphi^j=0$ for
		$m$-almost every $x\in U$, and hence
		\[
		|\varphi^j(y)-\varphi^j(x)|=o(d(x,y))
		\]
		at almost every such $x$. Thus the zero linear map is also a differential
		of $\varphi^j$ at $x$. Since $e_j^*\neq0$, this contradicts the uniqueness
		of the differential. Therefore $(X,d,m)$ is not a Lipschitz
		differentiability space.
	\end{proof}
	
	\begin{remark} Theorems~\ref{t:diff-space} and~\ref{t:zero-ug} extend,
		with minor modifications, to the nonlinear setting of
		$p$-strongly local regular energy spaces introduced in
		\cite{ErikssonBiqueMuruganEID}.
		We conjecture that the conclusions of Theorems \ref{t:zero-ug} and   \ref{t:diff-space} remain valid under the weaker assumption of $\Psi$ given in Lemma \ref{l:sub-gauss}; that is, 
		\[
		\lim_{r \downarrow 0}  \frac{\Psi(r)}{r^2}=0.
		\]
		The assumption on the scale function $\Psi$ in Theoren \ref{t:diff-space} also plays an important role in the proof of the singularity of energy measures in \cite{KM20}. There is a similar conjecture to prove the singularity of energy measure under the slightly weaker assumption on $\Psi$ \cite[Conjecture 2.15]{KM20}. We refer to \cite[Remark 2.14]{KM20} for examples of scale functions that satisfy the weaker condition on $\Psi$ but not the stronger one assumed in the Theorem \ref{t:diff-space}.
	\end{remark}

\bigskip
\noindent\textsc{Sylvester Eriksson-Bique}\\
Department of Mathematics and Statistics, University of Jyv\"askyl\"a,
P.O. Box 35, FI-40014 University of Jyv\"askyl\"a, Finland\\
\textit{Email:} \href{mailto:sylvester.d.eriksson-bique@jyu.fi}{\texttt{sylvester.d.eriksson-bique@jyu.fi}}

\medskip
\noindent\textsc{Mathav Murugan}\\
Department of Mathematics, University of British Columbia,
Vancouver, BC V6T 1Z2, Canada\\
\textit{Email:} \href{mailto:mathav@math.ubc.ca}{\texttt{mathav@math.ubc.ca}}
\end{document}